\documentclass[12pt]{amsart}
\usepackage{amssymb}
\usepackage{amsmath}
\usepackage{amsfonts}
\usepackage{epsfig}
\usepackage{color}
\usepackage{epstopdf}
\usepackage{graphicx}
\usepackage{srcltx}
\usepackage{amsthm}
\usepackage{enumerate}
\usepackage[mathscr]{eucal}
\usepackage{verbatim}
\usepackage[titletoc,toc,title]{appendix}
\usepackage{hyperref}

\headheight=8pt     \topmargin=20pt \textheight=624pt
\textwidth=432pt \oddsidemargin=10pt \evensidemargin=10pt
\setcounter{MaxMatrixCols}{30} \theoremstyle{plain}
\newtheorem{theorem}{Theorem}[section]
\newtheorem{lemma}[theorem]{Lemma}

\newtheorem{rmk}[theorem]{Remark}

\theoremstyle{plain}

\newcommand{\nnorm}[1]{\Vert#1\Vert}
\newcommand{\abs}[1]{\vert #1 \vert}

\newcommand{\Tchi}[1]{T \chi_{#1}}
\newcommand{\TTchi}[1]{T^\ast \chi_{#1}}

\newcommand{\WTchi}[1]{\widetilde{T}\chi_{#1}}
\newcommand{\WTTchi}[1]{\widetilde{T}^{\ast}\chi_{#1}}

\title[Convolution estimates for measures on some complex curves]{Convolution  estimates for  measures\\on some complex curves}

\author{Hyunuk Chung}
\author{Seheon Ham}
\keywords{convolution estimate, complex curves, affine arclength
measure} \subjclass[2010]{42B10}

\address {Department of Mathematics\\ Pohang University of Science and Technology
\\ Pohang 790-784, Korea}
\email{nuki@postech.ac.kr}
\address {School of Mathematics\\ Korea Institute for Advanced Study
\\ Seoul 130-722, Korea}
\email{hamsh@kias.re.kr}
\date{ \today}

%
%
\begin{document}
\begin{abstract}
We consider the convolution operator for a measure supported on complex curves. The measure which we consider here is an analogue of
the affine arclength measure for real curves.
By modifying a combinatorial argument called the band structure argument, we  prove  the (nearly) optimal Lorentz space estimates. This includes the optimal strong type estimates as special cases.
The complex curves we consider here are the ones considered for the Fourier restriction estimates for complex curves in \cite{BH}.
\end{abstract}
\maketitle

\section{Introduction}
Let $h (z) = (z,z^2,\dots,z^{d-1},\phi(z))$, $d\ge2$, be a complex curve of simple type in $\mathbb C^d$, where $\phi(z)$ is an analytic function defined on
a region $\Omega \subset \mathbb C$. This is regarded as a 2-dimensional surface in $\mathbb R^{2d}$ defined by the real mapping
\[
z=(x,y) \mapsto h(x,y) = (x,y,x^2-y^2,2xy,\dots,\textrm{Re}(\phi(z)),\textrm{Im}(\phi(z))).
\]

We consider a convolution operator for a measure supported on the range of $h$, defined by
\[
\mathcal A f( \xi ) = \int_{D} f(\xi -h(z) ) \, |\phi^{(d)}(z)|^{\frac{4}{d(d+1)}} \, d\mu(z).
\]
Here $D$ is the unit ball in $\mathbb R^2$, $f$ is a Borel function on $\mathbb R^{2d}$, and $d\mu(z)$ is the surface measure $d\mu(z) = d\mu(h(z)) =
dx dy$, where $z = x+ i y$.
We have $|\phi^{(d)}(z)|^{\frac{4}{d(d+1)}} d\mu(z) = c_d \, |\det(h'(z),h''(z),\dots,h^{(d)}(z))|^{\frac{4}{d(d+1)}}\, d\mu(z)$,
which is an analogue of the affine arclength measure in the case of real curves.

It is obvious that $\mathcal A : L^\infty (\mathbb R^{2d}) \mapsto L^\infty(\mathbb R^{2d}) $, and by duality $\mathcal A : L^1 (\mathbb R^{2d}) \mapsto L^1(\mathbb R^{2d}) $.
By interpolation between these trivial estimates, we see that $\mathcal A : L^p \mapsto L^p$ for all $1 \le p \le \infty$.
We are interested in determining how much the integrability can be improved via the measure $$|\phi^{(d)}(z)|^{\frac{4}{d(d+1)}} d\mu(z) = :d\sigma (z) .$$
In the nondegenerate case, i.e. when  $\phi(z) = z^d$, one can see that
$\mathcal A$ is not of restricted weak type $(p,q)$ outside of
the (closed) trapezoid $\mathcal R$ with vertices at $(0,0)$, $(1,1)$, $(1/p_d,1/q_d)$, and $(1/q'_d,  1/p'_d)$ in the plane.
Here $p_d = \frac{d+1}{2}$ and $q_d = \frac{d(d+1)}{2(d-1)}$. (We will discuss the necessary conditions in Appendix B.)
Hence, if we show the restricted weak type for $\mathcal A$ at $(p_d,q_d)$ (then by duality $\mathcal A$ is also of restricted weak type of $(q_d',p_d')$),
then $\mathcal A$ is of strong type $(p ,q)$ on $\mathcal R$
except for the points $(1/p_d,1/q_d)$ and $(1/q'_d,  1/p'_d)$.
In addition to the restricted weak type at $(p_d,q_d)$, we will show the optimal boundedness of $\mathcal A$ on the scale of Lorentz spaces.

The following is our first result.



\begin{theorem}\label{thm:monomial}
Let $d \ge 2$ and $\phi(z) = z^N$ for a nonnegative integer $N$. 
Then there exists a constant $C=C(d,N)$ such that
\begin{align}\label{main_inq:mono}
\| \mathcal A f \|_{L^{q_d,v} (\mathbb R^{2d})} \le C  \| f \|_{L^{p_d,u}(\mathbb R^{2d})}\quad\textrm{ for }\, u < q_d , \,  p_d < v , \,\textrm{and }\,  u<v .
\end{align}
Here $C = C(d,N)$ depends only on $d,N$.
\end{theorem}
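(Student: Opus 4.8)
\emph{Overview and reductions.} The plan is to reduce the Lorentz estimate to a single restricted-type bound at the endpoint $(p_d,q_d)$ carrying only a logarithmic loss, and then to prove that bound by a band-structure argument adapted to the complex curve. If $0\le N<d$ then $\phi^{(d)}\equiv 0$, so $\mathcal A\equiv 0$ and there is nothing to prove; hence assume $N\ge d$, in which case $|\phi^{(d)}(z)|^{4/(d(d+1))}=c_{d,N}\,|z|^{\gamma}$ with $\gamma=\tfrac{4(N-d)}{d(d+1)}\ge 0$, so the weight is simply a power of $|z|$. The curve enjoys the anisotropic dilation symmetry $h(\lambda z)=\delta_\lambda h(z)$ for $\lambda>0$, where $\delta_\lambda$ multiplies the $k$th complex coordinate by $\lambda^{m_k}$, $(m_1,\dots,m_d)=(1,2,\dots,d-1,N)$; together with the homogeneity of the weight this makes $\mathcal A$ intertwine the dilations $f\mapsto f\circ\delta_\lambda$ up to an explicit scalar. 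Decomposing the domain dyadically, $\mathcal A=\sum_{j\le 0}\mathcal A_j$ with $\mathcal A_j$ the part of the integral over $|z|\sim 2^{j}$, each $\mathcal A_j$ is a $\delta_{2^{j}}$-conjugate of the unit-scale operator $\mathcal A_0$, so $\|\mathcal A_j\|_{L^a\to L^b}=2^{j\kappa(a,b)}\|\mathcal A_0\|_{L^a\to L^b}$ for an affine function $\kappa$ of $(1/a,1/b)$ which vanishes at $(1/p_d,1/q_d)$ and $(1/q_d',1/p_d')$ and is positive on the open trapezoid $\mathcal R$. Thus for $(1/a,1/b)$ interior to $\mathcal R$ the series $\sum_{j\le 0}2^{j\kappa}$ converges and the interior strong-type estimates follow at once; the endpoint is precisely where this crude summation fails.

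\emph{Reduction to a logarithmically refined endpoint bound.} I would establish
\[
\langle \mathcal A\chi_E,\chi_F\rangle\ \lesssim\ |E|^{1/p_d}\,|F|^{1/q_d'}\,\big(1+\big|\log(|E|/|F|)\big|\big)
\]
for all measurable $E,F$, which together with the trivial $\langle\mathcal A\chi_E,\chi_F\rangle\le\min(|E|,|F|)$ suffices. Indeed, a standard argument then yields \eqref{main_inq:mono}: decompose $f$ and $g$ along their dyadic level sets, apply the displayed bound to the resulting characteristic-function pieces, and sum, using that the strict gaps $u<q_d$, $p_d<v$, $u<v$ produce geometrically decaying factors that dominate the single logarithm. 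This is the mechanism by which $L^{p_d,u}\to L^{q_d,v}$ with $u<v$ can hold even when strong type at $(p_d,q_d)$ need not; for the monomial it already covers the strong type $L^{p_d}\to L^{q_d}$ and the restricted weak type $L^{p_d,1}\to L^{q_d,\infty}$ as the two extreme cases. So the task is the displayed inequality.

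\emph{The band-structure argument.} As the ambient space is $\mathbb R^{2d}$ and the curve is two real dimensions, the natural multilinearization uses $d$ complex parameters: consider the map $\Phi\colon\mathbb C^{d}\cong\mathbb R^{2d}\to\mathbb R^{2d}$ given by $\Phi(z_1,\dots,z_d)=h(z_1)\pm h(z_2)\pm\cdots\pm h(z_d)$, the signs chosen to match a suitable $d$-fold composition of $\mathcal A$ and $\mathcal A^{*}$. Since $\Phi$ is holomorphic in each $z_k$ separately, its real Jacobian is $|J(z_1,\dots,z_d)|^{2}$, where $J=\det[\,h'(z_1)\,|\,\cdots\,|\,h'(z_d)\,]$ equals, up to a nonzero constant,
\[
\Big(\prod_{1\le i<k\le d}(z_k-z_i)\Big)\,h_{N-d}(z_1,\dots,z_d),
\]
$h_{N-d}$ being the complete homogeneous symmetric polynomial of degree $N-d$ (so $J$ is a generalized, not a classical, Vandermonde when $N>d$). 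The argument then: (i) encodes, for $E,F,\lambda$ with $\mathcal A\chi_E\gtrsim\lambda$ on a large part of $F$, the corresponding multilinear quantity — a weighted count of tuples whose partial sums lie in the appropriate sets — and bounds it below by a power of $\lambda$ times $|F|$; (ii) partitions the parameters into \emph{bands} according to the dyadic sizes of the pairwise differences $|z_i-z_k|$; (iii) on each band, discards a small exceptional portion — where the parameters cluster, or where the factor $h_{N-d}$ is small — and uses the lower bound for $|J|$ from the separation of the $z_k$, together with the change of variables through $\Phi$, to pass from counting to Lebesgue measure; (iv) checks that the powers of $|z_k|$ from the affine-arclength weight combine with the homogeneity of $h_{N-d}$ and with the band scales so that each band contributes the endpoint bound times a summable factor, the number of nonempty bands producing the single logarithm. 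The dilation symmetry is invoked inside (iii) to normalize a band to unit scale, and summing over bands gives the displayed inequality.

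\emph{Main obstacle.} The crux is step (iii) in the complex setting. Three features are genuinely new compared with the classical real-variable case: the parameters are two-dimensional, so the clustering/selection and counting steps and the geometry of partial sums are planar rather than one-dimensional; the Jacobian is $|J|^2$ with $J$ a generalized Vandermonde — the factor $h_{N-d}$, harmless and strictly positive for real positive parameters, now vanishes on a complex hypersurface and must be confined by an extra decomposition of parameter space; and the weight $|z|^{\gamma}$ must be propagated through every estimate so that, with the homogeneity of $h_{N-d}$ and the band scales, everything stays invariant under $\delta_\lambda$. Keeping all constants uniform in scale is essential, since the band sum here plays the role of the otherwise-divergent sum over dyadic $z$-scales, and only a scale-free per-band estimate lets it close.
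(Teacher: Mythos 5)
Your overall picture of the band‑structure step is reasonable (the Jacobian is $|J|^{2}$ with $J$ the generalized complex Vandermonde $V(z_1,\dots,z_d)\,Q_{N-d}(z_1,\dots,z_d)$, the zero set of $Q_{N-d}$ is handled by a sectorial decomposition, and all estimates must be homogeneous in the $\delta_\lambda$-scaling). But the top‑level reduction you propose does not work, and this is a genuine gap, not a cosmetic one.

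You reduce the theorem to the pointwise two‑set bound
\[
\langle \mathcal A\chi_E,\chi_F\rangle \;\lesssim\; |E|^{1/p_d}\,|F|^{1/q_d'}\bigl(1+\bigl|\log(|E|/|F|)\bigr|\bigr),
\]
together with the trivial estimate $\langle\mathcal A\chi_E,\chi_F\rangle\le\min(|E|,|F|)$, and claim that dyadic decomposition then ``sums, using that the strict gaps $u<q_d$, $p_d<v$, $u<v$ produce geometrically decaying factors that dominate the single logarithm.'' This is not correct. First, notice that the restricted weak type bound $\langle\mathcal A\chi_E,\chi_F\rangle\lesssim |E|^{1/p_d}|F|^{1/q_d'}$ already holds \emph{without} any logarithmic loss (this is Christ's band-structure result, reproduced as Lemma~\ref{d-trilinear} with $E_1=E_2$ in the paper); introducing a log factor only weakens it. More importantly, even the clean restricted weak type estimate at $(p_d,q_d)$, plus the $L^1$ and $L^\infty$ trivial bounds, does \emph{not} yield the strong type $L^{p_d}\to L^{q_d}$ that is included in your claimed range (take $u=p_d$, $v=q_d$). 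If you try the dyadic summation you describe with $f=\sum_{k=1}^{M}2^k\chi_{E_k}$, $g=\sum_{j=1}^{M}2^j\chi_{F_j}$, $|E_k|=2^{-kp_d}$, $|F_j|=2^{-jq_d'}$, the right-hand side is $M^{1/p_d+1/q_d'}$ while the left side of your proposed two-set bound, summed over $j,k$, is already of order $M^2$ (or $M^3$ once the log is included), and $1/p_d+1/q_d'<2$ since $p_d<q_d$. Incorporating the trivial bound $\min(|E_k|,|F_j|)$ does not rescue the summation either; no single pointwise two-set inequality of the type you propose is strong enough.

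What is actually needed is a \emph{structural} statement about near-extremizers, and this is exactly what the paper proves as Lemmas~\ref{d-trilinear} and~\ref{trilinearF}: a ``trilinear'' (three-set) lower bound of the form ``if $T\chi_{E_1}\ge\alpha_1$ and $T\chi_{E_2}\ge\alpha_2$ on $G$ then $|E_2|\gtrsim\alpha_1^{d(d+1)/2}(\beta/\alpha_1)^{d-1}(\alpha_2/\alpha_1)^d$,'' together with its dual variant. The mechanism in Appendix~A (following Christ's quasi-extremal argument and Stovall) uses these to show that within each $m$-class $\mathscr K_m^F$ — defined by how closely a given pair saturates the restricted bound, \emph{not} by the ratio $|E|/|F|$ — only boundedly many dyadic pieces can contribute, and the separation parameter $m$ produces the geometric decay. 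The ``third set'' in the trilinear estimate is indispensable: it is how the argument detects that two pieces cannot simultaneously be near-extremal unless they are close, which is what lets the $m$-sum converge. A log-refined bilinear bound carries no such structural information and cannot replace it. So step (a) of your plan is unnecessary (the clean restricted type already holds) and step (b) is false as a logical implication; the band-structure machinery you describe has to be aimed instead at proving the two trilinear lemmas.
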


This implies that $\mathcal A$ is of strong type in $\mathcal R$.
This result relies on an estimate for a lower bound of some Jacobian (see Section \ref{Sec:Jaco}) arisen from  the  Fourier restriction theorem for complex curves (see \cite{BH}), where a uniform Fourier restriction estimate for polynomial curves of simple type for $d=3$ was also obtained.
Using this, we thus get the following.

\begin{theorem}\label{thm:poly}
When $d=3$ and $\phi(z)$ is a polynomial of degree at most $N$, \eqref{main_inq:mono} holds for $h(z) = (z,z^2,\phi(z))$ and a constant $C=C(N)$, which depends only on $N$.
\end{theorem}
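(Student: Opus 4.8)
The plan is to reduce Theorem~\ref{thm:poly} to the same engine that proves Theorem~\ref{thm:monomial} in the case $d=3$. First dispose of the degenerate case: if $\deg\phi\le2$ then $\phi'''\equiv0$, so the density $|\phi^{(3)}(z)|^{4/(d(d+1))}$ vanishes identically, $\mathcal A\equiv0$, and \eqref{main_inq:mono} is vacuous. Hence we may assume $3\le\deg\phi\le N$, so that $P:=\phi'''$ is a nonzero polynomial of degree at most $N-3$, with at most $N-3$ zeros counted with multiplicity.

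The key point is that the proof of Theorem~\ref{thm:monomial} for $d=3$ is \emph{modular}: the particular shape $\phi(z)=z^N$ enters only through affine-invariant structural features of the curve and through the lower bound for the relevant Jacobian established in Section~\ref{Sec:Jaco}; the (modified) band structure combinatorics, and the passage from it to the restricted weak type and then to the Lorentz bounds, do not see which admissible $\phi$ is in play. Consequently it suffices to produce the Section~\ref{Sec:Jaco} Jacobian lower bound \emph{uniformly} over all simple-type curves $h(z)=(z,z^2,\phi(z))$ with $\deg\phi\le N$, with a constant depending only on $N$; inserting that bound into the argument then yields \eqref{main_inq:mono} with $C=C(N)$.

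For the uniform Jacobian bound I would invoke \cite{BH}: the estimate of Section~\ref{Sec:Jaco} is precisely the one isolated there en route to the Fourier restriction theorem for complex curves, and \cite{BH} proved it uniformly for polynomial curves of simple type in $\mathbb C^3$ of bounded degree. The mechanism there is a decomposition of the parameter domain adapted to the zeros of $P=\phi'''$: since $P$ has at most $N-3$ zeros, one splits $D$ into $O_N(1)$ Borel pieces on each of which $P$ is comparable to a single power of the distance to one zero, and then normalizes each piece by an affine reparametrization $z=aw+b$ of $\mathbb C$ (centering that zero at the origin, rescaling to a fixed disc) together with a complex-affine change of variables in the target $\mathbb C^3$, which puts the curve in the model form $(w,w^2,\psi(w))$ with $\psi$ a polynomial of degree $\le N$ and $\psi'''$ comparable to a monomial. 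One uses here that $|\phi^{(3)}|^{4/(d(d+1))}d\mu$ is the affine arclength density, so that the convolution estimate at $(p_d,q_d)$ --- and, by interpolation, its Lorentz refinements --- is invariant under exactly these operations; thus each piece reduces to a uniform monomial-type estimate, and summing the $O_N(1)$ pieces (legitimate since $q_d>1$, so that $L^{q_d,v}$ is normable) gives the assertion.

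The main obstacle is the bookkeeping behind the word ``modular'': one must check, essentially line by line through the $d=3$ case of the proof of Theorem~\ref{thm:monomial}, that $\phi(z)=z^N$ never intervenes except via affine invariance or via the Section~\ref{Sec:Jaco} bound, and one must cite the precise intermediate estimate in \cite{BH} --- rather than merely its final restriction theorem --- that furnishes the required uniform lower bound for the Jacobian. Once this is granted, the decomposition of $D$ by the zeros of $\phi'''$ and the affine normalizations are routine.
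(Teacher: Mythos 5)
Your reduction is, at the top level, exactly the paper's: use Lemma~\ref{jaco_poly} (Lemma~4.2 of \cite{BH}) to split the parameter domain into $O_N(1)$ pieces $B_\ell$, observe via \eqref{assum:poly} that on each piece (after translating the relevant zero of $\phi'''$ to the origin) the affine arclength density is comparable to a single power $|z|^{k/3}$, run the trilinear lower-bound machinery on each piece, and sum over pieces using that $L^{q_3,v}$ is normable since $q_3=3>1$. You also correctly dispose of the degenerate case $\deg\phi\le 2$, where $\phi'''\equiv0$ forces $\mathcal A\equiv0$, and you correctly identify that the needed input is the \emph{intermediate} Jacobian lower bound of \cite{BH} for polynomial curves of simple type in $\mathbb C^3$, not the final restriction theorem.

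The one place your ``modularity'' claim overstates matters is the band structure. The error-term analysis in Lemma~\ref{slice-jaco} does not use only the Jacobian lower bound of Section~\ref{Sec:Jaco}: it relies on the explicit factorization \eqref{jaco} of the monomial-curve Jacobian as a Vandermonde determinant times the homogeneous monic polynomial $Q_{N-d}$, together with termwise upper bounds on $Q_{N-d}$ and on its partial derivatives. For a general polynomial $\phi$ the Jacobian factors instead as a Vandermonde times a divided difference of $\phi'$, and the corresponding upper bounds would need a separate argument. The paper avoids this entirely in $d=3$: Section~5 proves the requisite trilinear estimates (Lemmas~\ref{trilinearE} and~\ref{trilinear_3d}) \emph{directly}, without the band structure, using only the lower bound \eqref{jaco:poly-mono} together with a short ball-based case split, and then feeds them into Appendix~A, which is genuinely modular. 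So the ``line by line'' check you propose would in fact reveal that, for $d=3$, the Section~3--4 band-structure machinery (and in particular Lemma~\ref{slice-jaco}) should be bypassed in favor of the direct $d=3$ argument; once that substitution is made your outline coincides with the paper's proof.
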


For a two-dimensional surface $h(z) = (z,\phi(z))$ in $\mathbb R^4$, where $\phi(z)$ is not necessarily holomorphic, Drury and Guo \cite{DG} showed the
$L^{3/2}(\mathbb R^4) \rightarrow L^3(\mathbb R^4)$ boundedness for the convolution operator defined by an induced measure on the surface $h(z)$ under some nondegeneracy conditions on
$\phi(z)$.

Our approach basically follows the case of real curves. Let $\gamma(t) : I =[0,1] \mapsto \mathbb R^d$ be a smooth space curve in $\mathbb R^d$. Let us denote the
affine arclength measure along $\gamma(t)$ by
\[
w(t) dt =  [ \det \begin{pmatrix}\gamma'(t) , \gamma''(t),\dots,  \gamma^{(d)}\end{pmatrix} ]^{\frac{2}{d(d+1)}} dt.
\]
There has been much work about estimates for the convolution operator given by
\[
 \mathcal B f(x) = \int_I f(x-\gamma(t)) w(t) dt.
\]

When $\gamma(t)  = (t,t^2,\dots,t^d)$ i.e. $w(t)dt \sim dt$,
Littman \cite{L} ($d=2$) and Oberlin \cite{O,O2,O4,O5} established the strong type $(p_d,q_d)$ for $d=3$ and the restricted weak type $(p_d,q_d)$ for $d=4$.
The endpoint case in $d=2$, i.e. $L^{3/2,r}(\mathbb R^2) \rightarrow L^{3,r}(\mathbb R^2)$ estimate for all $p_2= 3/2 \le r \le 3 = q_2$ was shown by Bak, Oberlin, and Seeger \cite{BOS}.
The estimate $L^2(\mathbb R^3) \rightarrow L^{3/2,2}(\mathbb R^3)$ (which is the case $u = v = p_3 = 2$) was established by Bennett and Seeger in \cite{BS}, where the result was proved by analyzing the singularities of the phase function of a certain oscillatory integral operator.
When $d \ge 2$, Christ \cite{Ch1} proved the restricted weak type $(p_d,q_d)$ for $\mathcal B$ by using the band structure argument.
This argument was extended by Stovall \cite{S1} to establish the $L^{p_d,u} \rightarrow L^{q_d,v}$ estimates for  $ u < q_d$, $v > p_d$, and $u < v$.

Gressman \cite{G} proved the restricted weak type $(p_{\mathbf a},q_{\mathbf a})$ for a class of monomial curves $\gamma(t) = (t^{a_1},\dots,t^{a_d})$ for positive integers $a_1 < \dots <a_d$, and for $w(t) =|t|^b$, $-1 < b \le 0$.
Here, $(p_{\mathbf a},q_{\mathbf a})$ depends on the exponents $a_1,\dots, a_d$,  where ${\mathbf a} = (a_1,\dots, a_d)$.
See also \cite{P, O3, Choi2} for the $d=3$ case.

For more details on the history related to general classes of curves, we refer to \cite{S2, DLW}  and the references therein.
Here we will focus on some results related to our work.
Drury \cite{D} introduced the affine arclength measure to obtain optimal $L^{p_2} \rightarrow L^{q_2}$ estimates for $(t, p(t))$, where $p(t)$ satisfies some technical assumptions.
(See also \cite{Choi1}.)
Let $\gamma(t) = (P_1(t) ,\dots,P_d(t))$ for arbitrary polynomials $P_i(t)$, $1\le i \le d$.
When $d=2$, Oberlin \cite{O5} showed optimal $L^{p_2} \rightarrow L^{q_2}$ boundedness of  $\mathscr A $, where the constant depends only on the maximum degree of the polynomials.
Dendrinos, Laghi and Wright \cite{DLW} obtained the uniform boundedness of $\mathcal B$ in some Lorentz spaces when $d=3$,
which was an extension of the case $\gamma(t) = (t, P_1(t), P_2(t))$ established by Oberlin \cite{O5}.
For the general dimension, Stovall \cite{S2} proved the $L^{p_d,u} \mapsto L^{q_d,v}$ boundedness whenever $u < q_d$, $p_d < v$, and $ u < v$.

For curves with less regularity, more conditions on the torsion were needed. Oberlin \cite{O6} proved the sharp strong type boundedness for $\mathcal B$ in $d=2,3,4$ for certain flat curves of simple type, where the weight function is monotone and $\log$-concave. For the higher dimensional cases it seems to be difficult to construct the band structure. Recently, Dendrinos and Stovall \cite{DS} proved the restricted weak type estimates for certain curves (not necessarily simple) with low regularity under monotonicity and concavity assumptions on the affine arclength measure. They also obtained strong type $(p_d,q_d)$ for the monomial-like curves. By suitably ordering certain parameters, they efficiently avoid the band structure argument.

We basically follow the argument in \cite{S2}, which exploited geometric inequalities arisen from Fourier restriction estimates for polynomial curves (see \cite{DW}) and the band structure argument in \cite{Ch1, S1}.



As in the study of the Fourier restriction estimates for space curves, properties of the mapping $(t_1,\dots,t_d) \mapsto \sum_{i=1}^d \gamma(t_i)$, such as finite generic multiplicity and a lower bound for the Jacobian, are required in this paper to perform a change of variables.
As in \cite{S2}, which relied on the uniform estimates for the Jacobian in \cite{DW}, we rely on an analogous (but weaker) result in \cite{BH}, related to the Fourier restriction estimates for complex curves.
In fact, we can decompose $\mathbb C$ into finitely many disjoint regions such that the holomorphic Jacobian of the mapping $(z_1,\dots,z_d) \mapsto \sum_{i=1}^d h(z_i)$ is bounded below on each region by the complex Vandermonde determinant and the arithmetic mean of $|\phi^{(d)}(z_1)|,\dots,|\phi^{(d)}(z_d)| $.
See Section \ref{Sec:Jaco} for more details.

We wish to point out here that the affine arclength measure was first used in the study of the Fourier restriction estimates for various classes of degenerate curves to allow the possibility of uniform estimates by mitigating the degeneracy of the torsion of the curve.
(See \cite{BOS1,BOS2,BOS3,DM,DW,D,DM1,DM2,S3} 
for the case of degenerate real curves, and see \cite{Ob,BH} for complex curves.)

The usual treatment of the affine arclength measure involved a lower bound of a certain Jacobian in terms of the geometric mean of the translates of the torsion. Here, following \cite{BOS3, BH}, we will exploit a stronger estimate involving the arithmetic mean (or, equivalently, the maximum) of the translates of the torsion (in Lemma \ref{jaco_poly} and \ref{jaco_mono}) rather than their geometric mean.
Using $\max_{1\le i \le d} |\phi^{(d)}(z_i)|$ instead of $\prod_{1 \le i \le d} |\phi^{(d)}(z_i)|^{\frac 1d}$, our computation can be simplified quite a bit, since it allows us to manipulate the exponents easily by choosing $c_1,\dots,c_d$ appropriately such that
\[
 \max_{1\le i \le d} |\phi^{(d)}(z_i)| \ge \prod_{1\le i \le d} |\phi^{(d)}(z_i)|^{c_i} \, \textrm{, where } \sum_{i=1}^d c_i =1.
 \]


If one can obtain Lemma \ref{jaco_poly} for complex polynomial curves in general dimensions, Theorem \ref{thm:monomial} may be extended to cover those curves by using the modified band structure argument for complex variables.
The idea is to consider balls in $\mathbb C$ in place of the distance between real variables, which we will explain in more detail in Section 3 (which was motivated by the proof of Lemma \ref{trilinear_3d}).
By this we can obtain an estimate on Lorentz spaces.

For the optimal estimate we need further observations, which will be given in Section 4.
For the case $d=3$, the argument is simpler, since it does not need the band structure argument. This case will be discussed in Section 5 for the sake of completeness.
The standard method to obtain Theorem \ref{thm:monomial} and \ref{thm:poly} from the main lemmas in Sections 3 and 4 was established by Stovall \cite{S1}.
For the sake of completeness a detailed proof will be given in Appendix A.
The necessary conditions on the indexes $p_d, q_d, u, v$ will be discussed in Appendix B.

\section{Lower bounds for the Jacobian}\label{Sec:Jaco}
In this section, we recall the lower bounds for the Jacobian for $h(z) = (z,z^2, \phi (z))$, where $\phi(z)$ is an arbitrary polynomial.
Let $J_{\mathbb C}(z_1,z_2,z_3)$ be the determinant of the holomorphic Jacobian of the mapping $(z_1,z_2,z_3) \mapsto \Phi_h(z_1,z_2,z_3) =
-h(z_1)+h(z_2)-h(z_3)$.
Then $\mathbb C$ is decomposed into a bounded number of regions, on which a lower bound of $J_{\mathbb C}(z_1,z_2,z_3)$ may be given as follows.
\begin{lemma}[Lemma 4.2 in \cite{BH}]\label{jaco_poly}
There exists a positive integer $M = M(N)$ and a collection of convex open sets $B_1,\dots,B_M$, which are pairwise disjoint, such that $\mathbb C =
\cup_{\ell=1}^M B_\ell$ ignoring null sets. Moreover, there exists a constant $c(N) >0$ such that for $1 \le \ell \le M$,
\begin{align*}
|J_{\mathbb C}(z_1,z_2,z_3) | \ge c(N) V (z_1,z_2,z_3) \max\{ |\phi'''(z_1)|,|\phi'''(z_2)|,|\phi'''(z_3)|\},
\end{align*}
whenever $z_1,z_2,z_3 \in B_\ell$.
Here $V(z_1,z_2,z_3) = |z_2-z_1||z_3-z_1||z_3-z_2|$ is the Vandermonde determinant.
\end{lemma}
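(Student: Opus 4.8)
\emph{Reduction to a divided difference.} I would first make $J_{\mathbb C}$ explicit. With $h(z)=(z,z^2,\phi(z))$, the holomorphic Jacobian determinant of $\Phi_h=-h(z_1)+h(z_2)-h(z_3)$ equals, up to transpose, the determinant of the matrix with rows $-h'(z_1),h'(z_2),-h'(z_3)$; pulling the sign out of the first and third rows and a factor $2$ out of the second column,
\[
J_{\mathbb C}(z_1,z_2,z_3)=2\det\begin{pmatrix}1 & z_1 & \phi'(z_1)\\ 1 & z_2 & \phi'(z_2)\\ 1 & z_3 & \phi'(z_3)\end{pmatrix}=2\,(z_2-z_1)(z_3-z_1)(z_3-z_2)\,\phi'[z_1,z_2,z_3],
\]
the second equality being the standard evaluation (obtained by subtracting the first row from the others) of such a determinant as the Vandermonde times the second-order divided difference $\phi'[z_1,z_2,z_3]$ of the function $\phi'$. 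Since $|(z_2-z_1)(z_3-z_1)(z_3-z_2)|=V(z_1,z_2,z_3)$, it then suffices to produce convex open sets $B_1,\dots,B_M$, partitioning $\mathbb C$ up to null sets, with
\[
|\phi'[z_1,z_2,z_3]|\ \ge\ c(N)\,\max_{1\le i\le 3}|\phi'''(z_i)|\qquad\text{for all }z_1,z_2,z_3\in B_\ell ;
\]
if $\deg\phi\le2$ then $\phi'''\equiv0$ and one takes $M=1$, $B_1=\mathbb C$.

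\emph{An integral representation.} Next I would use the Hermite--Genocchi formula: since $\phi'$ is entire,
\[
\phi'[z_1,z_2,z_3]=\int_{\Sigma}\phi'''\big(t_1z_1+t_2z_2+t_3z_3\big)\,dt ,
\]
where $\Sigma=\{(t_1,t_2,t_3):t_i\ge0,\ t_1+t_2+t_3=1\}$ carries Lebesgue measure with $\int_\Sigma dt=\tfrac12$ and $\int_\Sigma t_i^{\,k}\,dt\gtrsim_N 1$ for $0\le k\le N-3$. When $B_\ell$ is convex, $t_1z_1+t_2z_2+t_3z_3\in B_\ell$ whenever the $z_i$ are, so the estimate only involves values of $Q:=\phi'''$ --- a polynomial of degree $m\le N-3$ --- inside $B_\ell$. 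The whole problem thus reduces to a statement about the single polynomial $Q$: partition $\mathbb C$ into $O_N(1)$ convex cells on which the average of $Q$ over $\Sigma$, evaluated at any triple from the cell, dominates (up to a constant depending only on $N$) the largest of its values at the three vertices.

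\emph{The cells and the estimate.} For the cells I would invoke the standard decomposition lemma for polynomials, which groups the zeros of $Q$ into $O_N(1)$ clusters at various scales: it yields a partition of $\mathbb C$, up to null sets, into $O_N(1)$ convex cells $B_\ell$, each attached to a cluster centre $\zeta=\zeta_\ell$ and contained in a sector about $\zeta$ of half-angle $\le\pi/(8N)$, on which $Q$ factors as $Q(z)=c\,(z-\zeta)^{k}R(z)$ with $1\le k\le m$, $R$ zero-free, $|R|$ comparable to a constant, and $\arg R$ nearly constant --- so that $\arg Q$ varies by less than $\pi/2$ over $B_\ell$. Given such a cell, I would fix $z_1,z_2,z_3\in B_\ell$ and choose a direction $\theta_0$ for which $|\arg(z-\zeta)-\theta_0|$ stays small on $B_\ell$; writing $w=t_1z_1+t_2z_2+t_3z_3$,
\[
\mathrm{Re}\big(e^{-i\theta_0}(w-\zeta)\big)=\sum_{i}t_i\,\mathrm{Re}\big(e^{-i\theta_0}(z_i-\zeta)\big)\ \ge\ c_N\sum_{i}t_i|z_i-\zeta| ,
\]
so, with $i_0$ an index maximizing $|z_i-\zeta|$,
\[
\Big|\int_\Sigma Q(w)\,dt\Big|\ \ge\ c_N\int_\Sigma|Q(w)|\,dt\ \gtrsim_N\ |c|\,|R(z_{i_0})|\,|z_{i_0}-\zeta|^{k}\int_\Sigma t_{i_0}^{\,k}\,dt\ \gtrsim_N\ |Q(z_{i_0})| .
\]
Because the factorization is common to the whole cell, $i_0$ also realizes $\max_i|Q(z_i)|$ up to a factor $C_N$, so this gives $|\phi'[z_1,z_2,z_3]|\gtrsim_N\max_i|\phi'''(z_i)|$ and in particular disposes of the triples for which some $\phi'''(z_i)=0$.

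\emph{Main obstacle.} The delicate ingredient --- and the technical heart of Lemma~4.2 of \cite{BH} --- is the \emph{uniformity} of the decomposition: $M$ and $c(N)$ must depend only on the degree bound $N$, not on the (arbitrarily located) zeros of $\phi'''$. I expect this to be the main difficulty. It is handled by induction on $\deg Q$ together with an affine rescaling at each stage that normalizes the dominant scale, the unbounded ``outermost'' cell (where $Q(z)\sim c\,z^{m}$) being absorbed into the same scheme; once that decomposition is available, the steps above are routine bookkeeping.
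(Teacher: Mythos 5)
The paper does not itself prove this statement; it is quoted verbatim from Lemma~4.2 of \cite{BH} and the paper only records (right after the lemma) the shape of the decomposition --- the Voronoi cell $S(u_1)$ of a chosen zero, the gap and dyadic annuli $G_k$, $D_k$, and the narrow sectors $\Delta$, together with the comparability $|\phi'''(z)|\approx H_k|z|^k$ on each $B_\ell$. So there is no proof in this paper against which to calibrate your argument step by step; the relevant comparison is with that description and with \cite{BH}.

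Within that caveat, your reduction is right and matches the intended mechanism. The identity $J_{\mathbb C}(z_1,z_2,z_3)=2(z_2-z_1)(z_3-z_1)(z_3-z_2)\,\phi'[z_1,z_2,z_3]$ is correct (the two sign changes from $-h'(z_1)$ and $-h'(z_3)$ cancel, and the factor $2$ comes from the row $2z_i$). The Hermite--Genocchi representation of the divided difference, combined with the convexity of $B_\ell$ so that the averaging point $w=\sum t_i z_i$ stays inside $B_\ell$, is exactly the kind of argument that produces the \emph{maximum} (rather than geometric mean) of $|\phi'''(z_i)|$ on the right-hand side, which is the feature that the paper highlights as the strength of the \cite{BH} estimate.

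Two caveats, however. First, your description of the cells slightly overstates what the decomposition gives: on a cell $B_\ell\subset S(u_1)\cap\Delta\cap E_k$ one has $\phi'''(z)=c\prod_{n\le k}(z-u_n)\prod_{n>k}(z-u_n)$ with $u_1=\zeta$, and the nearby zeros $u_2,\dots,u_k$ need not coincide with $\zeta$, so $Q$ does \emph{not} literally factor as $c(z-\zeta)^kR(z)$ with $R$ zero-free. What one actually has is $|z-u_n|\approx|z-\zeta|$ with comparable arguments for $n\le k$, and $|z-u_n|\approx|u_n|$ with nearly constant argument for $n>k$, whence $|Q(z)|\approx H_k|z-\zeta|^k$ and $\arg Q$ varies by $O(\varepsilon)$ on $B_\ell$. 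Your downstream estimate --- pulling out a direction $\theta_0$, using $\operatorname{Re}(e^{-i\theta_0}(w-\zeta))\ge c_N\sum t_i|z_i-\zeta|$, and then $\int_\Sigma t_{i_0}^k\,dt\gtrsim_N1$ --- uses only these two facts, so this is a presentation issue rather than a gap, but the ``exact factorization'' sentence should not be taken at face value. Second, as you yourself note, the whole weight of the lemma rests on the uniform $O_N(1)$ convex decomposition with argument control, and you invoke it as a ``standard decomposition lemma'' without supplying it. That decomposition \emph{is} the content of Lemma~4.2 of \cite{BH}; once it is granted, the remaining bookkeeping you give is fine, but as written your proposal reduces the lemma to another unproved statement of essentially the same depth.
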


Let us describe the set $B_\ell$ in brief. Fix a zero $u_1$ of $\phi'''(z)$ and denote the other $N-4$ zeros by $u_2$,\dots,$u_{N-3}$ such that $|u_2-u_1|\le
\dots \le |u_{N-3} -u_1|$. 
Set $S(u_1)= \{ z \in \mathbb C : |z-u_1| < |z-u_k| \textrm{ for } k=2,\dots,N-3 \}$. By translation, we may assume that $u_1=0$.
Then we define the gap annuli $G_k$ and the dyadic annuli $D_k$ by
\begin{align*}
G_k &= \{ z_1 \in \mathbb C : A |u_k| \le |z_1| \le A^{-1} |u_{k+1}| \} \textrm{ for } 1 \le k \le N-4,    \\
D_k &= \{ z_1 \in \mathbb C : A_1^{-1} |u_k| \le |z_1| \le A_1 |u_k| \} \textrm{ for } 2 \le k \le N-3,
\end{align*}
and $G_{N-3}  = \{ z_1 \in \mathbb C : A |u_{N-3}| \le |z_1| \}$.
Here $A$ and $A_1$ are appropriate constants. (See the proof of Lemma 4.2 in \cite{BH}.)
In addition, let us consider the collection of narrow sectors $\{\Delta\}$ centered at the origin with angle $\varepsilon$, which cover $\mathbb C$.
Since the operator $\mathcal A$ is invariant under an affine transformation, it suffices to consider one sector $\Delta = \{ z= r e^{i\theta} : ~0< r , \, \theta\in(0,\varepsilon) \}$.
Then $B_\ell$ is a convex subset of $S(u_1)\cap \Delta \cap E_k$ for some $E_k = G_k$ or $D_k$.
By the proof of Lemma 4.2 in \cite{BH}, we have that
\begin{equation}\label{assum:poly}
|\phi'''(z)| \approx \prod_{n=k+1}^{N-3} |u_n| |z|^k =: H_k |z|^k
\end{equation}
whenever $z \in B_\ell \subset S\cap \Delta\cap E_k$ with $E_k = G_k$ or $D_k$.

In fact, $|J_{\mathbb C} (z_1,z_2,z_3)|$ can be reduced to the determinant of the holomorphic Jacobian of the mapping $(z_1,z_2,z_3) \mapsto \Phi_{\tilde h}(z_1,z_2,z_3)$, where $\tilde h(z) = (z,z^2, z^{k+3})$ on $B_\ell$.
More precisely, it is known that
\begin{align}\label{jaco:poly-mono}
|J_{\mathbb C} (z_1,z_2,z_3) | \gtrsim H_k \cdot V(z_1,z_2,z_3) \cdot \max\{ |z_1|^k,|z_2|^k,|z_3|^k \}
\end{align}
if $z_1,z_2,z_3 \in B_\ell$ for some $B_\ell \subset S\cap \Delta\cap E_k$ with $E_k = G_k$ or $D_k$.

When $h(z)$ is a monomial curve of simple type, we have the following.
\begin{lemma}[Lemma 3.3 in \cite{BH}]\label{jaco_mono}
Let $h(z) = (z,z^2,\dots,z^{d-1},z^N)$ for an integer $N\ge d$ with $d \ge 2$. Set
\[
J_d(z_1,\dots,z_d) = J_{\mathbb C} (z_1,\dots,z_d) = \det(h'(z_1),\dots,h'(z_d))
\]
where $z_j \in \mathbb C$, $1\le j \le d$. Then $\mathbb C$ may be written as the union (ignoring a null-set) of $C(d,N)$ sectors $\Delta_\ell$ with vertex
at the origin such that for each $1 \le \ell \le C(d,N)$, we have
\begin{align*}
|J(z_1,\dots,z_d)| \ge c(d,N) \,\max_{1\le j \le d}|z_j|^{N-d} \prod_{1 \le i < j \le d} |z_j - z_i|
\end{align*}
where $z_j \in \Delta_\ell$. Here, $C(d,N)$ and $c(d,N)$ are positive constants depending only on $d$ and $N$.
\end{lemma}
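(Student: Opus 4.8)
The plan is to reduce $J(z_1,\dots,z_d)$, up to a harmless constant and sign, to the product of a Vandermonde determinant and a complete homogeneous symmetric polynomial, and then to bound that symmetric polynomial below by an elementary sector argument. First I would write out the Jacobian matrix: since $h'(z)=(1,2z,3z^2,\dots,(d-1)z^{d-2},Nz^{N-1})$, its $i$-th row is $(i\,z_1^{i-1},\dots,i\,z_d^{i-1})$ for $1\le i\le d-1$ and its $d$-th row is $(Nz_1^{N-1},\dots,Nz_d^{N-1})$. Pulling the scalars $1,2,\dots,d-1,N$ out of the rows gives
\[
J(z_1,\dots,z_d)=\pm\,(d-1)!\,N\cdot\det\big(z_j^{\mu_i}\big)_{1\le i,j\le d},\qquad (\mu_1,\dots,\mu_d)=(0,1,\dots,d-2,\,N-1).
\]
After reordering rows (which only changes the sign), this is the bialternant attached to the one-row partition $\lambda=(N-d,0,\dots,0)$, so by the classical identity $\det(z_j^{\lambda_i+d-i})=s_\lambda(z)\prod_{i<j}(z_j-z_i)$ it equals $h_{N-d}(z_1,\dots,z_d)\prod_{i<j}(z_j-z_i)$, where $h_{N-d}$ is the complete homogeneous symmetric polynomial of degree $N-d$ (when $N=d$ this is $1$ and the lemma is just the Vandermonde formula with one sector $\Delta_1=\mathbb C$). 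Hence
\[
|J(z_1,\dots,z_d)|=(d-1)!\,N\,\big|h_{N-d}(z_1,\dots,z_d)\big|\prod_{i<j}|z_j-z_i|,
\]
and everything reduces to showing $|h_{N-d}(z_1,\dots,z_d)|\gtrsim\max_j|z_j|^{N-d}$, with constant depending only on $d,N$, on suitable sectors.

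For the latter I would use that $h_{N-d}$ is the sum of all monomials $z_1^{a_1}\cdots z_d^{a_d}$ with $a_1+\dots+a_d=N-d$, \emph{each with coefficient $+1$}. Take the $\Delta_\ell$ to be the $C(d,N):=4(N-d)$ angular sectors at the origin of opening $\varepsilon:=\pi/(2(N-d))$, which cover $\mathbb C$ up to the null set $\{0\}$. If $z_1,\dots,z_d\in\Delta_\ell$, then each $\arg z_j$ ranges over an interval of length $\varepsilon$, so for \emph{every} exponent vector $(a_1,\dots,a_d)$ the product $z_1^{a_1}\cdots z_d^{a_d}$ has argument $\sum_j a_j\arg z_j$ lying in one and the same interval $I$ of length $(N-d)\varepsilon=\pi/2$ (it depends on the sector but not on the exponents). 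Projecting all these monomials onto the bisector direction of $I$ then gives
\[
\big|h_{N-d}(z_1,\dots,z_d)\big|\ \ge\ \cos\!\Big(\tfrac{\pi}{4}\Big)\sum_{a_1+\dots+a_d=N-d}|z_1|^{a_1}\cdots|z_d|^{a_d}\ \ge\ \tfrac{1}{\sqrt2}\,\max_{1\le j\le d}|z_j|^{N-d},
\]
where the last inequality keeps only the monomial $z_j^{N-d}$ for each $j$. Combining with the displayed factorization yields the lemma with $c(d,N)=(d-1)!\,N/\sqrt2$.

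The argument is elementary from start to finish; the one point needing a moment's attention is that both the number of sectors and the constant depend on $d,N$ alone, and this is automatic here since the opening angle $\varepsilon$ is forced by the single number $N-d$ and the lower bound for $h_{N-d}$ is uniform over all exponent vectors and over all $z_1,\dots,z_d$ in a common sector. This reproduces Lemma 3.3 of \cite{BH}.
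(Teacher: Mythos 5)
Your proof is correct, and it is essentially the standard route. The key factorization $J_d = N\,(d-1)!\,V(z_1,\dots,z_d)\,Q_{N-d}(z_1,\dots,z_d)$ (with $Q_m$ the complete homogeneous symmetric polynomial, which you call $h_m$) is exactly identity \eqref{jaco} stated in the paper and attributed to Section 3 of \cite{BH}, and the lower bound $|Q_{N-d}(z)|\gtrsim\max_j|z_j|^{N-d}$ on narrow sectors via the phase-alignment/projection argument is the intended mechanism; your choice $\varepsilon=\pi/(2(N-d))$ with $C(d,N)=4(N-d)$ sectors and constant $(d-1)!\,N/\sqrt2$ is a clean, explicit instance of it, and your separate handling of the degenerate case $N=d$ is also correct.
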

In this case, it suffices to consider the case when $\Delta_{\ell} = \Delta = \{ re^{i \theta} : r > 0 \textrm{ and } \theta \in (0,\epsilon) \}$ for some
small $\epsilon$.
\section{The band structure for complex variables}

In this section, we consider a monomial curve $h(z) = (z,z^2,\dots,z^{d-1},z^N)$ for a nonnegative integer $N$. We may assume $N \ge d$, because $\mathcal A
f(x) =0$ for $N <d$. To handle the general dimensional case, we basically follow the `band structure' argument in \cite{Ch1}.

By Lemma \ref{jaco_mono} it suffices to consider
\begin{align}\label{operator}
T f(x) = \int_{\Delta} f(x-h(z))d\sigma(z),
\end{align}
where $d\sigma(z) \sim |z|^{\frac{4(N-d)}{d(d+1)}} d\mu(z) =: |z|^{\frac{4K}{d(d+1)}} d\mu(z)$ and $\Delta = \{ re^{i \theta} : 0 < r \le 1 \textrm{ and } \theta
\in (0,\epsilon) \}$ for a small constant $\epsilon$ as in Lemma \ref{jaco_mono}.

The estimate \eqref{main_inq:mono} for $T$ follows from the propositions in the Appendix.
Those propositions assume the restricted weak type $(p_d,q_d)$, which we will now prove.

Let us define quantities $\alpha$ and $\beta$ by
\begin{align*}
\alpha = \frac{\left< \Tchi E , \chi_F \right>}{|F|} \quad \textrm{ and }\quad
\beta = \frac{\left< \TTchi F , \chi_E \right>}{|E|},
\end{align*}
for measurable sets $E, F$.
Here $T^\ast$ is the dual operator of $T$ given by $T^\ast f(x) = \int_\Delta f(x + h(z))d\sigma(z)$.
Note that $\alpha|F| = \beta|E|$.
Then
\begin{equation*}
\langle \Tchi{E}, \chi_F \rangle \lesssim |E|^{\frac{1}{p_d}} |F|^{\frac{1}{q_d'}}
\end{equation*}
is equivalent to
\begin{align}\label{Eab1}
|E| \gtrsim \alpha^{\frac{d(d+1)}{2}} \left(\frac{\beta}{\alpha}\right)^{d-1}
\end{align}
since $\alpha|F| = \beta |E| = \langle \Tchi{E}, \chi_{F} \rangle$.

The following lemma is a refinement of \eqref{Eab1}.
\begin{lemma}\label{d-trilinear}
Let $E_1,E_2,G \subset \mathbb R^{2d}$ be measurable sets with finite measure. Suppose that
\begin{align*}
\Tchi{E_1}(x) \ge \alpha_1 \textrm{   and   } \Tchi{E_2} (x) \ge \alpha_2
\end{align*}
for all $x \in G$ and $\alpha_1 \le \alpha_2$. Then
\begin{align*}
| E_2| \gtrsim \alpha_1^{\frac{d(d+1)}{2}} \,\left( \frac{\beta}{\alpha_1}\right)^{d-1} \, \left(\frac{\alpha_2}{\alpha_1}\right)^{d},
\end{align*}
where $\beta = \alpha_1 \frac{|G|}{|E_1|}$.
\end{lemma}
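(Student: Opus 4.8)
The plan is to run the band structure argument of Christ and Stovall in the complex setting, using balls in $\mathbb C$ in place of intervals on the real line and iterating the change-of-variables estimate coming from Lemma \ref{jaco_mono}. First I would fix $x_0 \in G$ and, for a parameter to be chosen, refine the sets $E_1, E_2$ to their essential parts relative to $x_0$: the hypothesis $\Tchi{E_j}(x) \ge \alpha_j$ on $G$ means that for each $x \in G$ the set of $z \in \Delta$ with $x - h(z) \in E_j$ has $\sigma$-measure $\gtrsim \alpha_j$. The key geometric input is that, after composing with $h$, a $d$-fold "product" configuration $(z_1,\dots,z_d)$ with $z_i$ ranging over such sets maps, via $(z_1,\dots,z_d)\mapsto \sum_i h(z_i)$, with Jacobian bounded below by $\max_j |z_j|^{N-d}\prod_{i<j}|z_j - z_i|$; integrating this against the weights $|z_j|^{4K/(d(d+1))}d\mu(z_j)$ turns the Vandermonde lower bound into the right power of $|E_2|$.

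The main steps, in order: (1) \emph{Set up the iteration.} Start from the two-set inequality \eqref{Eab1} (equivalently the restricted weak type at $(p_d,q_d)$), which gives $|E| \gtrsim \alpha^{d(d+1)/2}(\beta/\alpha)^{d-1}$ for a single set $E$ with $\Tchi E \ge \alpha$ on $F$; here I would take $E = E_1$, $F = G$, $\alpha = \alpha_1$, $\beta = \alpha_1 |G|/|E_1|$. (2) \emph{Upgrade using the second, larger density $\alpha_2$.} This is where the band structure enters: inside the "cells" produced in proving \eqref{Eab1}, use that $\Tchi{E_2} \ge \alpha_2 \ge \alpha_1$ to insert one more factor coming from the $E_2$-fibers. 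Because the Jacobian bound in Lemma \ref{jaco_mono} is a genuine product over all $d$ points (not just the geometric mean of the torsion, thanks to the $\max_j|z_j|^{N-d}$ formulation), each additional fiber contributes one more power of $\alpha_2$ and the combinatorics stays clean. The net effect is to replace one of the $d$ "slots" that in \eqref{Eab1} is filled by an $E_1$-fiber (of density $\alpha_1$) with an $E_2$-fiber (of density $\alpha_2$), which multiplies the right-hand side by $(\alpha_2/\alpha_1)^{d}$ after accounting for the scaling of the weighted Vandermonde integral. (3) \emph{Bookkeeping of exponents.} Track how the weight $|z|^{4K/(d(d+1))}$ and the homogeneity $\max|z_j|^{N-d}$ interact with the rescalings dictated by $\alpha_1,\alpha_2,\beta$; the exponents $d(d+1)/2$, $d-1$, and $d$ should fall out of the same computation that produces \eqref{Eab1}, with the extra $(\alpha_2/\alpha_1)^d$ appended.

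The hard part will be Step (2): making the complex band structure argument rigorous — in particular, controlling the generic multiplicity of $(z_1,\dots,z_d)\mapsto \sum_i h(z_i)$ so that the change of variables is valid, and organizing the refinement of $E_1$ into bands so that the second density $\alpha_2$ can be exploited without losing the gains already obtained. Working with balls in $\mathbb C$ rather than intervals means the "nested interval" structure of Christ's original argument must be replaced by a more careful covering argument, and the narrow-sector restriction from Lemma \ref{jaco_mono} is what keeps this manageable; I expect the argument here to parallel, and in fact motivate, the proof of Lemma \ref{trilinear_3d} in the $d=3$ case. Everything else — the trivial $L^\infty$ and $L^1$ bounds, the passage from $T$ to $\mathcal A$ via Lemma \ref{jaco_mono}, and the deduction of \eqref{main_inq:mono} from this lemma — is standard and deferred to the appendix.
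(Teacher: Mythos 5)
Your proposal correctly identifies the paper's toolkit: an iterative refinement of $E_1$ and $G$ to turn average densities into pointwise densities, a tower of fibers $P_1,\dots,P_{2d}$ terminating in an $E_2$-fiber, the change of variables via the Jacobian lower bound of Lemma \ref{jaco_mono} with the $\max_j|z_j|^{N-d}$ (rather than geometric mean) torsion, and the band structure for closely spaced $z_j$'s. That all matches the paper's strategy, and your observation that the complex version uses balls in place of intervals is also how the paper motivates its variant of Christ's construction.

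The genuine gap is that you present a single uniform argument, whereas the paper's proof splits into two cases depending on whether $\beta \gtrsim \alpha_1$ or $\beta \ll \alpha_1$, and this dichotomy is indispensable. The direct computation (Lemma \ref{b>a}), which is what your Steps (1)--(3) actually amount to if run without the band structure, yields an exponent $a_d$ on $\beta/\alpha_1$ that is \emph{at least} $d-1$ (and strictly larger for $d\ge4$); this is acceptable only when $\beta/\alpha_1 \gtrsim 1$. Conversely, the band structure argument (Lemma \ref{band}, the slice argument of Lemma \ref{slice-jaco}, and the final computation) produces the exponent $\mathcal N + \lfloor k/2 \rfloor$, which is \emph{at most} $d-1$ and can be as small as $\lfloor d/2 \rfloor$; this is acceptable only when $\beta/\alpha_1 \ll 1$. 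Neither branch achieves the target exponent $d-1$ uniformly, so without the case split your argument cannot close. A secondary imprecision: attributing the $(\alpha_2/\alpha_1)^d$ to ``replacing one slot'' obscures the actual accounting, which is one factor of $\alpha_2$ from $\sigma(P_{2d})\gtrsim\alpha_2$ plus $d-1$ factors of $\alpha_2$ from the squared Vandermonde differences $|z_{2d}-z_j|^2$ controlled by property $(vi)$ of Lemma \ref{setup}; the $\alpha_1^{-d}$ normalization then arises by comparing with what those same factors would have contributed in the baseline estimate.
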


\begin{rmk}
If we set $E_1 =E_2$ and $\alpha_1 = \alpha_2$, we obtain \eqref{Eab1} i.e. the restricted weak type $(p_d, q_d)$ for $T$.
\end{rmk}
First we will find a sequence of subsets of $\Delta$ and their properties, which is essential to construct a band structure of Lemma \ref{band}.

\begin{lemma}\label{setup}
Let $\gamma=\max\{\alpha_{1},\beta\}$, $\nu = \frac{d(d+1)}{4K+2d(d+1)}$ under the assumptions in Lemma \ref{d-trilinear}. There exist a point $y_0$ in $E_1$, a constant $C>0$, and a sequence of subsets $P_1,\dots,P_{2d}$ of $\Delta$ such that
\begin{enumerate}[(i)]
\item $\sigma(P_j) \ge C \beta $ for odd $j$,
\item $\sigma(P_j) \ge C \alpha_1 $ for even $j < 2d$,
\item $\sigma(P_{2d}) \ge C \alpha_2 $,
\end{enumerate}
and $|z_j| \ge ( 4\pi\nu )^{-\nu} \gamma^\nu $ for $ z_j \in P_j$, $1 \le j \le 2d-1$, and $|z_{2d}| \ge ( 4\pi\nu )^{-\nu} \alpha_2^\nu$.

Also there exists a small constant $c>0$ such that
\begin{enumerate}
\item[(iv)] if $z_j \in P_j$ for odd $j$, then $|z_j - z_i| \ge c \beta^{\frac12}\abs{z_{i}}^{\frac{-2K}{d(d+1)}}$, where $i < j < 2d$,
\item[(v)] if $z_j \in P_j$ for even $j<2d$, then $\abs{z_{j}-z_{i}}\ge c \alpha_{1}^{\frac12} \abs{z_{i}}^{\frac{-2K}{d(d+1)}}$, where $i<j$,
\item[(vi)] for $z_{2d} \in P_{2d}$ and $j < 2d$, 
$\abs{z_{2d} -z_j}\ge c \alpha_2^{\frac12} \abs{z_{2d}}^{\frac{-2K}{d(d+1)}}$ if $\abs{z_j}< \frac12 ( 4\pi\nu )^{-\nu} \alpha_{2}^{\nu}$, and
$\abs{z_{2d}-z_{j}} \ge c \alpha_{2}^{\frac12}\abs{z_{j}}^{\frac{-2K}{d(d+1)}}$ if $\abs{z_j} \ge \frac12( 4\pi\nu )^{-\nu} \alpha_{2}^{\nu}$.
\end{enumerate}
\end{lemma}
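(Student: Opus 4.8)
The plan is to build the sets $P_1,\dots,P_{2d}$ greedily, one at a time, by repeatedly invoking the hypotheses $\Tchi{E_1}\ge\alpha_1$ and $\Tchi{E_2}\ge\alpha_2$ on $G$ together with a pigeonholing step, in the spirit of Christ's band-structure construction adapted to complex variables. First I would record the size constraint on $|z|$ coming from the weight: since $d\sigma(z)\sim|z|^{4K/(d(d+1))}\,d\mu(z)$ and $\Delta\subset D$ is bounded, for any set $S\subset\Delta$ with $\sigma(S)\ge c\lambda$ we can remove the portion of $S$ lying in a small disc $\{|z|\le \rho\}$ at the origin; that portion carries $\sigma$-mass at most $\int_{|z|\le\rho}|z|^{4K/(d(d+1))}\,d\mu\lesssim \rho^{2+4K/(d(d+1))}$, and choosing $\rho\sim\lambda^{\nu}$ with $\nu=\frac{d(d+1)}{4K+2d(d+1)}$ makes this at most $\tfrac12 c\lambda$. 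Hence we may always assume every point $z_j$ we select satisfies $|z_j|\gtrsim\lambda^{\nu}$ with $\lambda\in\{\alpha_1,\beta,\alpha_2\}$ as appropriate; taking $\lambda=\gamma=\max\{\alpha_1,\beta\}$ for $j\le 2d-1$ and $\lambda=\alpha_2$ for $j=2d$ yields the stated lower bounds with the explicit constant $(4\pi\nu)^{-\nu}$ after tracking the numerical constant in the elementary integral estimate (here the $4\pi$ absorbs the area normalization of the disc).

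Next, the iterative selection. The point $y_0\in E_1$ is chosen by an averaging argument: since $\langle \Tchi{E_1},\chi_G\rangle=\alpha_1|G|=\beta|E_1|$, there is a point $y_0\in E_1$ whose ``dual fiber'' $\{z:y_0+h(z)\in G\}$ has $\sigma$-measure $\gtrsim\beta$; this is $P_1$, giving (i) for $j=1$. Having built $P_1,\dots,P_{j-1}$ with representative points fixed along the way, one uses the bound $\Tchi{E_i}(x)\ge\alpha_i$ (with $i=1$ if $j<2d$, $i=2$ if $j=2d$) at a suitable running point to produce a fiber of $\sigma$-measure $\gtrsim\alpha_i$, then intersects with the constraint region and pigeonholes to get $P_j$ of the asserted size — $\gtrsim\beta$ for odd $j$, $\gtrsim\alpha_1$ for even $j<2d$, $\gtrsim\alpha_2$ for $j=2d$. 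The alternation $\beta,\alpha_1,\beta,\alpha_1,\dots$ is exactly the pattern forced by alternately using the operator and its adjoint (a point in $E$ ``sees'' mass $\alpha_1$, a point in $G$ ``sees'' mass $\beta$ through $T^\ast$, up to the relation $\alpha_1|G|=\beta|E_1|$), and the last step is where $E_2,\alpha_2$ enter.

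For the separation estimates (iv)–(vi), the mechanism is that two distinct representative points $z_i,z_j$ with $x-h(z_i)$ and $x-h(z_j)$ both in the same set of measure $\lesssim\alpha_i^{d(d+1)/2}(\cdots)$ cannot be too close: if $|z_j-z_i|$ were too small, the images $h(z_i),h(z_j)$ would be confined to a thin neighborhood, contradicting a lower bound on how spread out a $\sigma$-positive-measure set must be. Concretely, one estimates $\sigma(\{z\in\Delta:|z-z_i|<\delta\})\lesssim \delta^{2}\,|z_i|^{4K/(d(d+1))}$ (using $|z|\approx|z_i|$ on that disc, valid since $|z_i|\gtrsim\gamma^{\nu}$ dominates $\delta$ in the relevant range), and demands this be less than the mass $\beta$ (resp. $\alpha_1$, $\alpha_2$) that $P_j$ is required to carry after removing the bad disc; solving for $\delta$ gives $\delta\gtrsim\beta^{1/2}|z_i|^{-2K/(d(d+1))}$, etc. The two-case split in (vi) is a bookkeeping device: when $|z_j|$ is small ($<\tfrac12(4\pi\nu)^{-\nu}\alpha_2^{\nu}$), we have $|z_{2d}|\approx|z_{2d}-z_j|$ by the reverse triangle inequality and we phrase the bound in terms of $|z_{2d}|$; when $|z_j|$ is comparable to or larger than $|z_{2d}|$, the disc around $z_j$ has $\sigma$-mass controlled by $|z_j|^{4K/(d(d+1))}$ and we phrase it in terms of $|z_j|$. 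The main obstacle is purely organizational: one must thread the pigeonholing so that at stage $j$ the running point still lies in a set of controlled measure and still ``sees'' the full mass $\alpha_i$, i.e. the losses incurred by intersecting with $\{|z|\gtrsim\lambda^\nu\}$ and by the previous separation constraints do not accumulate beyond a constant factor — this forces the specific value of $\nu$ and the ordering $\alpha_1\le\alpha_2$, and is the step I would spend the most care on. The complex-variable aspect enters only mildly: balls in $\mathbb C$ replace intervals in $\mathbb R$, the area measure $d\mu$ replaces $dt$, and the exponent $2$ in $\delta^2$ (versus $\delta^1$ in the real case) is the only numerical change, propagated consistently through every estimate.
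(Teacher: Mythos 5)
Your overall architecture matches the paper's: truncate the domain to $|z|\gtrsim\gamma^{\nu}$ to control the weight (your integral estimate and the constant $(4\pi\nu)^{-\nu}$ are exactly right), iteratively build the $P_j$ by alternately applying $T$ and $T^{\ast}$ along a running point $y_0 + H_j(z_1,\dots,z_j)$, and obtain the separations (iv)--(vi) by removing small discs around previously chosen points and checking the removed $\sigma$-mass is at most half the mass of $P_j$. The disc estimate $\sigma(\{|z-z_i|<\delta\})\lesssim\delta^{2}|z_i|^{4K/(d(d+1))}$ and the two-case bookkeeping in (vi) are essentially the paper's.

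However, there is a genuine gap in the iterative selection, precisely the step you flag as ``the step I would spend the most care on'' but do not close. At stage $j=3$ the running point $y_0 + h(z_1) - h(z_2)$ lies in $E_1$, and you need a \emph{pointwise} bound $T^{\ast}\chi_G(y_0+h(z_1)-h(z_2))\gtrsim\beta$ to produce $P_3$. But the hypotheses give a pointwise bound only on $G$ (namely $T\chi_{E_1}\ge\alpha_1$ there) and an averaged relation $\langle T\chi_{E_1},\chi_G\rangle=\alpha_1|G|=\beta|E_1|$; the latter only says that \emph{some} subset of $E_1$ of comparable measure enjoys $T^{\ast}\chi_G\gtrsim\beta$, not all of $E_1$. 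Choosing $y_0\in E_1$ with a large dual fiber does not guarantee that the iterate lands in that favorable subset. (You also have the directions reversed: you write that a point in $E$ sees mass $\alpha_1$ and a point in $G$ sees mass $\beta$, whereas the hypothesis says points of $G$ see $E_1$-mass $\ge\alpha_1$ through $T$, and the $\beta$-bound on $E_1$ is what must be engineered.)

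The paper resolves this with a preliminary iterated refinement before fixing $y_0$: construct nested sets $E_1=E_1^0\supset E_1^1\supset\cdots\supset E_1^d$ and $G=G^0\supset G^1\supset\cdots\supset G^{d-1}$ by
\begin{align*}
E_1^j &= \{\,y\in E_1^{j-1}: T^{\ast}\chi_{G^{j-1}}(y)\ge \beta/2^{2j}\,\},\\
G^j   &= \{\,x\in G^{j-1}: T\chi_{E_1^j}(x)\ge \alpha_1/2^{2j+1}\,\},
\end{align*}
and prove by induction that $\langle T\chi_{E_1^j},\chi_{G^j}\rangle\ge\beta|E_1|/2^{2j+1}$ so that all these sets are nonempty. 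Then $y_0$ is taken in the deepest set $E_1^d$, $P_1$ is defined so that $y_0+h(z_1)\in G^{d-1}$, $P_2$ so that $y_0+h(z_1)-h(z_2)\in E_1^{d-1}$, and so on: at each stage landing one level shallower in the nested chain supplies exactly the pointwise bound needed to define the next $P_j$, and the chain is long enough (depth $d$) to carry the construction to $P_{2d-1}\subset G$ and finally $P_{2d}$ using $E_2,\alpha_2$. Without this refinement the greedy construction as you describe it stalls after two steps, so this is a missing idea rather than a detail to be filled in.
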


\begin{proof}
We begin by showing that we may consider a truncated operator instead of $T$ by following the proof of Lemma 1 in \cite{G}. 
Let us define $B_{\gamma}=\{z \in \Delta : \abs{z} < ( 4\pi\nu )^{-\nu} \gamma^{\nu} \}$
and a truncated operator $\widetilde{T}f(x)=\int_{\Delta\setminus B_{\gamma}} f(x-h(z)) d \sigma(z)$.

Since
\[
\sigma(B_{\gamma})=\int_{B_\gamma} \abs{z}^{\frac{4K}{d(d+1)}} d \mu (z)
\le  2 \pi \int_{0}^{( 4\pi\nu )^{-\nu}\gamma^{\nu}} s^{\frac{4K}{d(d+1)}+1}d s = 2 \pi \nu (4\pi\nu)^{-1} \gamma = \frac \gamma 2,
\]
we have that
\begin{align*}
\langle \WTchi{E_1}, \chi_{G}\rangle \ge \langle \Tchi{E_1}, \chi_{G}\rangle - \sigma(B_{\gamma})\abs{G} \ge (\alpha_{1}-\gamma/2)\abs{G}.
\end{align*}
If $\gamma=\alpha_{1}$, then $\langle \WTchi{E_1}, \chi_{G}\rangle \ge \frac{1}{2}\alpha_{1}\abs{G} =\frac12 \beta |E_1|$.

If $\gamma=\beta$, we see that
\begin{align*}
\langle \WTchi{E_{1}}, \chi_{G} \rangle = \langle \chi_{E_{1}}, \WTTchi{G} \rangle \ge \langle \chi_{E_{1}}, \TTchi{G} \rangle -
\sigma(B_{\gamma})\abs{E_{1}} \ge \frac1 2 \beta \abs{E_{1}}
\end{align*}
since $\langle \chi_{E_{1}}, \TTchi{G} \rangle \ge \alpha_1|G| = \beta|E_1|$.

Therefore 
we get that
\begin{align}\label{eq:j=0}
\langle \WTchi{E_{1}}, \chi_{G}\rangle \ge \frac{1}{2} \alpha_{1}\abs{G} = \frac12 \beta |E_1|.
\end{align}


Now we show $(i)$ -- $(vi)$.
By abuse of notation, we will write $T$, instead of $\widetilde{T}$.
Since $ \langle \Tchi{E_1}, \chi_G \rangle = \langle \chi_{E_1}, \TTchi{G}  \rangle   \ge \frac{1}{2}\beta
\abs{E_{1}}$, we can define a set
\begin{align}\label{eq:j=1}
E_1^1 = \{ y \in E_1 : \TTchi G (y) \ge {\beta}/4 \}.
\end{align}
Let us set
\begin{align*}
G^1 = \{ x\in G : \Tchi{E_1^1} (x) \ge {\alpha_1}/8 \},
\end{align*}
considering that
\begin{align*}
\langle \Tchi{E_1^1}, \chi_G \rangle
= \langle \Tchi{E_1}, \chi_G \rangle - \langle \chi_{E_1 \setminus E_1^1},  \TTchi{G} \rangle \ge \frac12 \beta |E_1| - \frac14 \beta |E_1|
=\frac 14 \beta |E_1| = \frac14 \alpha_1 |G|.
\end{align*}

%
Continuing this procedure, we can find sequences of sets $E_1^j$ and $G^j$ defined by
\begin{gather*}
E^0_{1} := E_{1},\quad G^0 :=G\\
E_1^j  = \{ y \in E_1^{j-1} : \TTchi{G^{j-1}}(y) \ge \beta / 2^{2j} \},\quad j=1,\dots,d \\
G^j  = \{ x \in G^{j-1} : \Tchi{E_1^j} (x) \ge \alpha_1 / 2^{2j+1} \},\quad j=1,\dots, d-1.
\end{gather*}

We should show that $E_{1}^{j}$ and $G^{j}$ are nonempty. 
It suffices to show that
\begin{align}\label{eq:j}
\langle \Tchi{E_{1}^{j}}, \chi_{G^{j}}\rangle \ge \beta \abs{E_{1}}/2^{2j+1}
\textrm{ for } j \ge 0.
\end{align}


The case $j=0$ is clear by \eqref{eq:j=0}.
Let us assume \eqref{eq:j} for some $j$. Then it follows that
\begin{align*}
\langle \Tchi{E_{1}^{j+1}}, \chi_{G^{j+1}} \rangle &= \langle \Tchi{E_{1}^{j+1}}, \chi_{G^{j}} \rangle - \langle \Tchi{E_{1}^{j+1}}, \chi_{G^{j} \setminus
G^{j+1}} \rangle \\
&\ge \langle\chi_{E_{1}^{j+1}}, \TTchi{G^{j}} \rangle -  2^{-2j-3} \alpha_{1}\abs{G} \\
&\ge \langle \chi_{E_{1}^{j}}, \TTchi{G^{j}} \rangle - \langle \chi_{E_{1}^{j} \setminus E_{1}^{j+1}}, \TTchi{G^{j}} \rangle - 2^{-2j-3} \alpha_{1}\abs{G} \\
&\ge \langle \chi_{E_{1}^{j}}, \TTchi{G^{j}} \rangle -  2^{-2j-2} \alpha_{1}\abs{G}-  2^{-2j-3} \alpha_{1}\abs{G} \\
&\ge  2^{-2j-1} \alpha_{1}\abs{G} -  2^{-2j-2} \alpha_{1}\abs{G}-  2^{-2j-3} \alpha_{1}\abs{G} = 2^{-2(j+1)-1} \beta\abs{E_{1}}.
\end{align*}
This gives the claim \eqref{eq:j} by induction.

Now we define
\begin{align}\label{def_H}
H_k(z_1,\dots,z_k) := \sum_{j=1}^{k} (-1)^{j+1} h(z_j), \quad k\ge 1.
\end{align}

Fix $y_0 \in E_1^{d}\subset E_1$ and set
\[
P_1 = \{ z_1 \in \Delta\setminus B_\gamma : y_0 + H_1(z_1) \in G^{d-1}\}, \textrm{ then }
\sigma(P_1) = \TTchi{G^{d-1}}(y_0) \ge \beta /2^{2d}.
\]
For all $z_1 \in P_1$, we also set
\[
P_2 = \{ z_2 \in \Delta\setminus B_\gamma : y_0 + H_2(z_1,z_2) \in E_1^{d-1} \}, \textrm{ then } \sigma(P_{2}) = \Tchi{E_1^{d-1}}(y_0 + H_1(z_1)) \ge
\alpha_1/2^{2d-1}.
\]
Through the iterative process, we obtain that for $k=2,3,\dots,d-1$,
\begin{gather*}
 P_{2k-1} = \{ z_{2k-1} \in \Delta\setminus B_\gamma : y_0 + H_{2k-1}(z_1,\dots,z_{2k-1}) \in G^{d-k} \}, \\
 P_{2k} = \{ z_{2k} \in \Delta\setminus B_\gamma : y_0 + H_{2k}(z_1,\dots, z_{2k}) \in E_1^{d-k} \}.
\end{gather*}
Finally we set
\begin{gather*}
P_{2d-1} = \{ z_{2d-1} \in \Delta\setminus B_\gamma : y_0 + H_{2d-1}(z_1,\dots,z_{2d-1}) \in G \},\\
P_{2d} = \{ z_{2d} \in \Delta \setminus B_{\alpha_2}: y_0 + H_{2d}(z_1,\dots,z_{2d}) \in E_2 \}
\end{gather*}
where $B_{\alpha_2} = \{ z \in \Delta : |z| \le (4\pi\nu)^{-\nu}\alpha_2^\nu \}$.
Then it follows that for $k=2,3,\dots, d-1$
\begin{gather*}
\sigma(P_{2k-1}) = \TTchi{G^{d-k}}(y_0+H_{2k-2}(z_1,\dots,z_{2k-2})) \ge \beta / 2^{2(d-k+1)}\\
\sigma(P_{2k}) = \Tchi{E_1^{d-k}}(y_0+H_{2k-1}(z_1,\dots,z_{2k-1})) \ge \alpha_1  / 2^{2(d-k)+1}
\end{gather*}
provided $z_j \in P_j$.
We also have
\begin{gather*}
\sigma(P_{2d-1}) = \TTchi{G}(y_0 + H_{2d-2}(z_1,\dots,z_{2d-2})) \ge \beta/2^2 ,
\end{gather*}
and
\begin{align*}
\sigma(P_{2d} ) &= \int_{\Delta\setminus B_{\alpha_2}} \chi_{E_2}(y_0 + H_{2d}(z_1,\dots,z_{2d}) ) d\sigma(z_{2d}) \\
& \ge \int_\Delta \chi_{E_2}(y_0 + H_{2d}(z_1,\dots,z_{2d}) ) d\sigma(z_{2d}) - \sigma(B_{\alpha_2}) \ge \frac 12 \alpha_2
\end{align*}
since $y_0 + H_{2d-2}\in E_1^1$ and $y_0 + H_{2d-1} \in G$.
Hence we get a sequence of sets $P_j$, $1\le j \le 2d$, in which $\abs{z_{i}} \gtrsim \gamma^{\nu}$, $1 \le i  < 2d$ and $\abs{z_{2d}} \gtrsim \alpha_2^\nu$.
Thus we obtain $(i)$--$(iii)$.

To prove $(iv)$--$(vi)$, we will consider subsets of $P_j$'s which maintain the properties $(i)$--$(iii)$.
%
Let $B_{\beta}(z_j) = \{z \in \Delta : \abs{z-z_{j}} \le c_{0} \beta^{1/2}\abs{z_{j}}^{\frac{-2K}{d(d+1)}} \}$ for a sufficiently small constant $c_0 > 0$.
Suppose $i$ is odd and $j < i < 2d$.
If $\abs{z} \le c \abs{z_{j}}$ for all $z \in P_{i}$ and a constant $c>0$, then
\begin{align*}
\sigma (P_{i} \cap B_{\beta}(z_j) ) =\int_{P_{i}\cap B_{\beta}(z_j)} \abs{z}^{\frac{4K}{d(d+1)}}d \mu (z)
\le (c \abs{z_{j}})^{\frac{4K}{d(d+1)}} \mu( B_{\beta}(z_j) )
 \le c_1 \beta
\end{align*}
where $c_1 = 2 \pi c^{\frac{4K}{d(d+1)}} c_0^2$. 
Since $\sigma(P_i) \ge \beta/2^{2d-i+1}$ when $i$ is odd, we have $\sigma(P_{i} \backslash B_{\beta}(z_j) ) \ge \beta/2^{2d-i+1} - c_1 \beta \ge \beta/2^{2d-i+2}$ if
we choose sufficiently small $c_{0}>0$. 
(Thus $(i)$ still holds for $P_i \setminus B_\beta(z_j)$.)
If $\abs{z} \le c \abs{z_{j}}$ is not valid for all $z \in P_{i}$, we use the fact that $\abs{z_{j}} \ge (4\pi\nu)^{-\nu} \gamma^{\nu}$ for $z_j \in P_j$.
It follows that $(4\pi\nu)^{\frac12} \abs{z_{j}} \ge \gamma^{\frac12}\abs{z_{j}}^{\frac{-2K}{d(d+1)}} \ge \beta^{\frac12}\abs{z_{j}}^{\frac{-2K}{d(d+1)}} $,
and then $\abs{z- z_{j}} < c_{0} \beta^{\frac12} \abs{z_{j}}^{\frac{-2K}{d(d+1)}}< c_{0} (4\pi\nu)^{\frac12} \abs{z_{j}}$ on $B_{\beta}(z_j)$.
In this case, we also obtain $\sigma(P_{i}\setminus B_{\beta}(z_j) ) \ge \beta/ 2^{2d-i+2}$, since $|z| \le (1+c_{0} (4\pi\nu)^{1/2})|z_j|$.
Thus we conclude that $(iv)$ holds in any case.

When $1 < i < 2d$ is even, we can also obtain $(v)$ by replacing $\beta$ with $\alpha_1$.

To show $(vi)$, we consider two cases for each $j < 2d$.
If $| z_j| < \frac12 (4 \pi\nu)^{-\nu}\alpha_2^\nu$, then $|z_{2d} - z_j| > \frac 12 |z_{2d}| \ge \frac12
(4\pi\nu)^{-1/2}\alpha_2^{1/2}|z_{2d}|^{-2K/[d(d+1)]}$, since $|z_{2d}| \ge (4\pi\nu)^{-\nu} \alpha_2^\nu$ and $\nu = d(d+1)/[4K +2d(d+1)]$.
If $| z_j| \ge \frac12 (4 \pi\nu)^{-\nu}\alpha_2^\nu$, i.e. $\frac12 (4\pi\nu)^{-1/2}\alpha_2^{1/2} |z_j|^{-2K/[d(d+1)]} \le |z_j|$, it follows that for $z
\in B_{\alpha_2}(z_j)$, the inequalities $|z- z_j| < c_0 \alpha_2^{1/2} |z_j|^{-2K/[d(d+1)]} < 2 c_0 (4\pi\nu)^{1/2}|z_j|$ imply that $ |z| < (1+2c_0 (4\pi\nu)^{1/2})|z_j|$ for
$z \in B_{\alpha_2}(z_j)$. By this we have that
\begin{align*}
\sigma(B_{\alpha_2}(z_j))
&= \int_{B_{\alpha_2}(z_j)} |z|^{\frac{4K}{d(d+1)}} d\mu(z)\\
& \le (1+2 c_0 (4\pi\nu)^{\frac12})^{\frac{4K}{d(d+1)}} |z_j|^{\frac{4k}{d(d+1)}} \times \mu(B_{\alpha_2}(z_j)) \\
& \le (1+2 c_0 (4\pi\nu)^{\frac12})^{\frac{4K}{d(d+1)}} |z_j|^{\frac{4K}{d(d+1)}} \times  \pi c_0^2 \alpha_2 |z_j|^{-\frac{4K}{d(d+1)}} \\
& = c_1 \alpha_2
\end{align*}
where $c_1 = \pi c_0^2 ( 1+2 c_0 (4\pi\nu)^{\frac12})^{\frac{4K}{d(d+1)}}$.
Choosing sufficiently small $c_0$, we can get $\sigma(P_{2d} \setminus B_{\alpha_2}(z_j)) > \alpha_2/4$.
Once again, $(iii)$ holds for $P_{2d} \setminus B_{\alpha_2}(z_j)$ in place of $P_{2d}$.
Hence $|z_{2d} - z_j| \ge c_0 \alpha_2^{\frac12} |z_j|^{-\frac{2K}{d(d+1)}}$ when $|z_j| \gtrsim \alpha_2^\nu$.
This completes the proof.
\end{proof}

To prove Lemma \ref{d-trilinear}, we consider two cases $\beta \gtrsim \alpha_{1}$ and $\beta \ll \alpha_1$.
First let us assume  $\beta \gtrsim \alpha_{1}$.
Let $\Phi_{2d}=\{(z_{1}, z_{2}, ..., z_{2d}) : z_{i} \in P_{i}\, \text{ where }\, 1 \le i \le 2d \}$.
We define $z_0=(z_{1}, z_{2}, ... ,z_{d}) \in \Phi_{d}$ and $\Phi := \{z \in \mathbb{C}^{d} : (z_{0},z)\in \Phi_{2d}\}$.
Note that $\sigma(\Phi) \sim \alpha_1^{d/ 2 - 1} \beta^{ d/ 2} \alpha_2$ when $d$ is even, and $\sigma(\Phi) \sim \alpha_1^{(d-1)/ 2} \beta^{(d-1)/2} \alpha_2$ when $d$ is odd.
For $d \ge 2$, let
\begin{align*}
a_{d}=
\begin{cases}
1 + 3 + \cdots + (d-1) &\text{if $d$ is even}, \\
2+ 4 + \cdots + (d-1) &\text{if $d$ is odd}.
\end{cases}
\end{align*}

The following lemma gives Lemma \ref{d-trilinear} whenever $\beta \gtrsim \alpha_1$ and $a_{d} \ge (d-1)$.
\begin{lemma}\label{b>a}
Let $d \ge 2$. Assume the hypotheses in Lemma \ref{d-trilinear}. Then
\begin{align*}
\abs{E_{2}} \gtrsim \alpha_{1}^{\frac{d(d+1)}2}\left(\frac{\beta }{\alpha_{1}}\right)^{a_{d}} \left( \frac{\alpha_{2}}{\alpha_{1}} \right)^{d}.
\end{align*}
\end{lemma}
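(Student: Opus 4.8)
The plan is to realize $E_2$, up to constants, as the image of the fiber $\Phi$ under the polynomial map that completes the iteration of Lemma \ref{setup}, and then to estimate the resulting image measure by a change of variables driven by the Jacobian lower bound of Lemma \ref{jaco_mono} together with the separation estimates (iv)--(vi).

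Concretely, I would fix $z_0=(z_1,\dots,z_d)\in\Phi_d$ (which is nonempty, since by Fubini and the lower bounds in Lemma \ref{setup} one has $\sigma(\Phi_d)>0$), put $w=(w_1,\dots,w_d):=(z_{d+1},\dots,z_{2d})$, and recall $K=N-d\ge0$ and $\sigma(\Phi)\gtrsim\beta^{m_o}\alpha_1^{m_e}\alpha_2$, where $m_o$, $m_e$ count the odd, respectively even, indices in $\{d+1,\dots,2d-1\}$. The map
\[
\Psi(w)=y_0+H_{2d}(z_1,\dots,z_d,w_1,\dots,w_d)=\Big(y_0+H_d(z_0)\Big)+\sum_{m=1}^d(-1)^{d+m+1}h(w_m)
\]
is holomorphic in $w\in\mathbb C^d$ and, by the construction of $P_{2d}$, maps $\Phi$ into $E_2$. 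Its complex Jacobian is $\pm\det\big(h'(w_1),\dots,h'(w_d)\big)$, which is not identically zero on $\Delta^d$; hence $\Psi$ is generically finite-to-one with multiplicity bounded by a constant $C(d,N)$ (the standard degree bound, cf.\ \cite{BH}). Using $|\det D\Psi|=\big|\det(h'(w_1),\dots,h'(w_d))\big|^2$ and the area formula,
\[
|E_2|\ \gtrsim\ \int_{\Phi}\big|\det(h'(w_1),\dots,h'(w_d))\big|^2\,dm(w),
\]
with $dm$ the Lebesgue measure on $\mathbb C^d=\mathbb R^{2d}$.

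Next I would feed in Lemma \ref{jaco_mono}, which (the signs being irrelevant for the modulus) gives
\[
\big|\det(h'(w_1),\dots,h'(w_d))\big|^2\ \gtrsim\ \max_{1\le m\le d}|w_m|^{2K}\ \prod_{1\le a<b\le d}|w_b-w_a|^2 .
\]
Writing $dm(w)=\prod_{m}|w_m|^{-\frac{4K}{d(d+1)}}\,d\sigma(w)$ and invoking (iv)--(vi) of Lemma \ref{setup}, every factor $|w_b-w_a|$ with $a<b$ satisfies $|w_b-w_a|\ge c\,\tau_b^{1/2}\,|w_{\ast(a,b)}|^{-\frac{2K}{d(d+1)}}$, where $\ast(a,b)\in\{a,b\}$ and
\[
\tau_b=\begin{cases}\beta,& d+b\text{ odd},\\[2pt]\alpha_1,& d+b\text{ even and }b<d,\\[2pt]\alpha_2,& b=d,\end{cases}
\]
so $\prod_{a<b}|w_b-w_a|^2\gtrsim\big(\prod_{b=2}^d\tau_b^{b-1}\big)\prod_{a<b}|w_{\ast(a,b)}|^{-\frac{4K}{d(d+1)}}$. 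The powers of the $|w_m|$ occurring in this product, together with those in $\prod_m|w_m|^{-\frac{4K}{d(d+1)}}$, total $-2K$ because $\binom d2+d=\frac{d(d+1)}2$; hence they combine into $\prod_m|w_m|^{-2Ke_m}$ with $e_m\ge0$ and $\sum_m e_m=1$, and the weighted AM--GM inequality yields $\max_m|w_m|^{2K}\ge\prod_m|w_m|^{2Ke_m}$. Thus the integrand above is $\gtrsim\prod_{b=2}^d\tau_b^{b-1}$ at every point of $\Phi$ (relative to $d\sigma$), and integrating,
\[
|E_2|\ \gtrsim\ \Big(\prod_{b=2}^d\tau_b^{b-1}\Big)\,\sigma(\Phi).
\]

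It then remains to compute. Splitting into the cases $d$ even and $d$ odd, reading off $m_o,m_e$ and the parity pattern of $\{d+1,\dots,2d\}$, and collecting the exponents of $\alpha_1,\beta,\alpha_2$ in $\big(\prod_{b=2}^d\tau_b^{b-1}\big)\sigma(\Phi)$, a direct arithmetic check shows that in both cases this quantity equals, up to a constant, $\alpha_1^{d(d+1)/2}\big(\beta/\alpha_1\big)^{a_d}\big(\alpha_2/\alpha_1\big)^{d}$, which is the assertion. I expect the main obstacle to be making the image-measure step rigorous: one needs the uniform bound on the generic multiplicity of the polynomial map $\Psi$, and one must verify that the separations in (iv)--(vi) indeed only involve the variables $w_1,\dots,w_d$ (and not the frozen $z_1,\dots,z_d$) — this is precisely what makes the $|w_m|$-exponents in the AM--GM step add up to exactly $-2K$. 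The final parity case-analysis is mechanical but must be carried out carefully to land on the exponent $a_d$.
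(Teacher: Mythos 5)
Your proposal is correct and follows essentially the same argument as the paper's: fix $z_0\in\Phi_d$, view $\Psi(w)=y_0+H_{2d}(z_0,w)$ as a (generically finite‑to‑one) holomorphic map into $E_2$, pull back Lebesgue measure via $|J_{\mathbb R}\Psi|=|J_{\mathbb C}\Psi|^2$, insert Lemma \ref{jaco_mono}, bound the Vandermonde factors using (iv)--(vi) of Lemma \ref{setup}, and absorb the resulting negative powers of $|w_m|$ into $\max_m|w_m|^{2K}$ by choosing nonnegative weights summing to one. Your bookkeeping via $\tau_b$ and the observation that the $|w_m|$‑exponents total exactly $-2K$ (since $d+\binom{d}{2}=d(d+1)/2$) is a tidier way to present the same cancellation that the paper implements by choosing explicit $K_i$'s, and the concerns you flag at the end (finite multiplicity of $\Psi$, and that only $w$‑variables enter the separation bounds because every Vandermonde pair lies in $\{d+1,\dots,2d\}$) are both genuine but unproblematic and are handled implicitly in the paper as well.
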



\begin{proof}
We assume that $d$ is even. The proof is similar when $d$ is odd.
Let $H(z):= y_{0} + H_{2d}(z_{0},z)$. By Lemma \ref{jaco_mono}, we have
\begin{align*}
\abs{E_{2}} &\gtrsim \int_{\Phi}\abs{ { J_{\mathbb R}   H (z) }}d\mu (z)= \int_{\Phi}\abs{J_{\mathbb{C}} H (z)}^{2} d\mu(z)\\
&\gtrsim \int_{\Phi}\max_{d+1 \le i \le 2d}\abs{z_{i}}^{2K}\prod_{d+1 \le i <j \le 2d}\abs{z_{j}-z_{i}}^{2}d\mu(z),
\end{align*}
where $J_{\mathbb R} H $ is the real Jacobian of $H$ in $\mathbb R^{2d}$.

Using $(iv)$ -- $(vi)$ in Lemma \ref{setup}, we obtain that
\begin{align*}
\abs{E_{2}} &\gtrsim \alpha_{1}^{1+3+\cdots+(d-3)}\beta^{2+4+\cdots+(d-2)} \alpha_{2}^{d-1} \times \\
&\qquad\times\int_{\Phi} \max_{d+1 \le i \le 2d}\abs{z_{i}}^{2K} \prod_{i=d+1}^{2d-2}\abs{z_{i}}^{\frac{-4K}{d(d+1)}(2d-i-1)} 
\prod_{j=d+1}^{2d-1} \abs{z_{j}}^{\frac{-4K}{d(d+1)}\varepsilon_{j}}\abs{z_{2d}}^{\frac{-4K}{d(d+1)}(1-\varepsilon_{j})} d \mu(z), \\
& = \alpha_{1}^{\frac{d(d-1)}{2}} \left(\frac{\beta}{\alpha_1} \right)^{2+4+\cdots+(d-2)} \left( \frac{\alpha_{2}}{\alpha_1} \right)^{d-1}  \times \\
&\qquad\times \int_{\Phi} \max_{d+1 \le i \le 2d}\abs{z_{i}}^{2K} \prod_{i=d+1}^{2d-1}\abs{z_{i}}^{\frac{-4K}{d(d+1)}(2d-i-1 + \varepsilon_i)} 
 \abs{z_{2d}}^{\frac{-4K}{d(d+1)}\sum_{i=d+1}^{2d-1}(1-\epsilon_{j})}d \mu(z),
\end{align*}
where $\varepsilon_{j}=0$ if $|z_j| <\frac12 (4\pi \nu)^{-\nu} \alpha_2^{\nu}$, and $\varepsilon_{j}=1$ if $|z_j |\ge \frac12 (4\pi \nu)^{-\nu} \alpha_2^{\nu}$, for $d+1 \le j \le 2d-1$.
It is clear that $\max_{d+1 \le i \le 2d}\abs{z_{i}}^{2K} \ge \prod_{i=d+1}^{2d}\abs{z_{i}}^{2K_{i}}$ when $\sum_{i=d+1}^{2d}K_{i}=K$. We can choose
$K_{i}\ge0$ such that
\begin{align*}
K_{i}=\frac{2K}{d(d+1)}(1+(2d-i-1)+\epsilon_{i})
\end{align*}
where $d+1 \le i \le 2d-1$, and
\begin{align*}
K_{2d}=\frac{2K}{d(d+1)}(1+\sum_{i=d+1}^{2d-1}(1-\epsilon_{i})),
\end{align*}
where $\varepsilon_{i}=0$ or $1$ for $d+1 \le i \le 2d-1$.

Then we get that
\begin{align*}
\abs{E_{2}} &\gtrsim \alpha_{1}^{\frac{d(d-1)}{2}} \left(\frac{\beta}{\alpha_1} \right)^{2+4+\cdots+(d-2)} \left( \frac{\alpha_{2}}{\alpha_1} \right)^{d-1}
\int_\Phi \prod_{i=d+1}^{2d} |z_i|^{\frac{4K}{d(d+1)}} d\mu(z) \\
& = \alpha_{1}^{\frac{d(d-1)}{2}} \left(\frac{\beta}{\alpha_1} \right)^{2+4+\cdots+(d-2)} \left( \frac{\alpha_{2}}{\alpha_1} \right)^{d-1}  \sigma (\Phi)\\
& \sim \alpha_{1}^{\frac{d(d-1)}{2}} \left(\frac{\beta}{\alpha_1} \right)^{2+4+\cdots+(d-2)} \left( \frac{\alpha_{2}}{\alpha_1} \right)^{d-1} \alpha_1^{ \frac d 2
- 1} \beta^{ \frac d 2} \alpha_2\\
&\sim \alpha_{1}^{\frac{d(d+1)}{2}} \left(\frac{\beta}{\alpha_{1}}\right)^{a_{d}} \left(\frac{\alpha_{2}}{\alpha_{1}}\right)^{d} ,
\end{align*}
when $d$ is even.
\end{proof}




\subsection{The band structure argument}
To handle the case $\beta \ll \alpha_1$, we modify the original band structure argument due to Christ \cite{Ch1}. We also refer to the works by Stovall
\cite{S2} and Gressman \cite{G}, which treat the degenerate cases.

We begin with recalling some definitions to describe the band structure argument. We shall decompose an index set into subsets called bands. In each band,
the even index or 1 is a `free index'. If there is no even index or 1, the smallest index is the free index. If a band has only two indices, the other is a
`quasi-free' index, and we say that the quasi-free index is quasi-bound to the free index. If a band has more than three indices, the other indices other than the
free index are bound to the free index.

The following lemma is a variant of the real case considered in \cite{S2}. 

\begin{lemma}\label{band}
Let $\varepsilon >0$.
There exist parameters $\delta, \delta'$ satisfying $0 < c_{d,\varepsilon} < \delta' <\varepsilon\delta $, a constant $\tilde c$, an integer $d\le k < 2d$, a
set $\omega \subset \mathbb C^{k}$ with $\sigma(\omega) \gtrsim \alpha_1^{ \lceil k/2 \rceil} \beta^{ \lfloor k/2 \rfloor} (\alpha_2/ \alpha_1)$, and a band
structure on $\{2d-k+1,\dots, 2d\}$, such that the following properties hold:
\begin{enumerate}[(i)]
\item There are exactly $d$ free or quasi-free indices. In particular, each even index is free.
\item $|z_i - z_j | >  \delta \alpha_1^{\frac12} |z_i z_j|^{-\frac{K}{d(d+1)}}$, unless $i$ and $j$ lie in the same band.
\item ${\tilde c} \beta^{\frac12} |z_i z_j|^{-\frac{K}{d(d+1)}} < |z_i - z_j| \leq \delta \alpha_1^{\frac12} |z_i z_j|^{-\frac{K}{d(d+1)}}$ whenever $i$ is quasi-bound to
$j$.
\item $\delta' \alpha_1^{\frac12} |z_i z_j|^{-\frac{K}{d(d+1)}} > |z_i - z_j| $ whenever $i$ is bound to $j$.
\end{enumerate}
\end{lemma}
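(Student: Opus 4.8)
The plan is to start from the sequence of sets $P_1,\dots,P_{2d}$ produced by Lemma \ref{setup} and to iteratively ``thin'' the index set $\{1,\dots,2d\}$ down to a subset $\{2d-k+1,\dots,2d\}$ of consecutive indices on which a genuine band structure can be imposed, keeping track at each step of how much $\sigma$-measure of the product set $\Phi$ survives. The guiding dichotomy is the same as in \cite{Ch1,S2}: for a given index $i$ and a fixed tuple of values $z_j$ at the earlier indices, either $P_i$ contains a point well-separated (in the scaled metric $|z_i-z_j|\gtrsim \delta\alpha_1^{1/2}|z_iz_j|^{-K/[d(d+1)]}$) from all the $z_j$, in which case $i$ becomes the free or quasi-free index of a new band, or else the bulk of the $\sigma$-mass of $P_i$ is concentrated in the union of the small balls $B(z_j)$ around the earlier points, which forces $i$ to be bound (or quasi-bound) to one of those $j$, and we may pass to that ball at the cost of a controlled constant factor. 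Crucially, the weight $|z|^{4K/[d(d+1)]}$ is locally constant on each ball $B_{\alpha_1}(z_j)$ up to a fixed multiplicative constant (this is exactly the computation already carried out in the proof of Lemma \ref{setup}, items $(iv)$--$(vi)$), so that $\sigma(P_i\cap B_{\alpha_1}(z_j))\lesssim \alpha_1$ and $\sigma(P_i\cap B_\beta(z_j))\lesssim \beta$; this is what lets us absorb the losses into the lower bound $\sigma(\omega)\gtrsim \alpha_1^{\lceil k/2\rceil}\beta^{\lfloor k/2\rfloor}(\alpha_2/\alpha_1)$.

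Concretely, I would process the indices $j=1,2,\dots,2d$ in order. At stage $j$ we have a band structure on the indices already accepted into $\omega$'s coordinate set, together with retained subsets $\widetilde P_i\subset P_i$ satisfying $(i)$--$(iii)$ of Lemma \ref{setup}. Fix values $z_i\in\widetilde P_i$ at the previously accepted indices and look at $P_j$. Choose the scale parameters $0<c_{d,\varepsilon}<\delta'<\varepsilon\delta$ small (they will be fixed at the end by a finite number of smallness requirements, exactly as $\delta,\delta'$ are chosen in \cite{S2}). If
\[
\sigma\Big(P_j\setminus\bigcup_{i\text{ accepted}} B_{\delta,\alpha_1}(z_i)\Big)\gtrsim \sigma(P_j),
\]
where $B_{\delta,\alpha_1}(z_i)=\{z:|z-z_i|\le \delta\alpha_1^{1/2}|zz_i|^{-K/[d(d+1)]}\}$, then $j$ opens a new band with free index $j$ (if $j$ is even or $j=1$) or with $j$ as its free/quasi-free index otherwise, and we record $(ii)$. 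Otherwise $P_j$ is essentially covered by the balls $\bigcup_i B_{\delta,\alpha_1}(z_i)$; by pigeonhole there is a single earlier index $i_0$ with $\sigma(P_j\cap B_{\delta,\alpha_1}(z_{i_0}))\gtrsim \sigma(P_j)$, and we attach $j$ to the band of $i_0$, declaring $j$ bound to the free index of that band if the band already has $\ge 2$ members, or quasi-bound if it is a singleton — in the latter case we further split according to whether $|z_j-z_{i_0}|$ is above the scale $\tilde c\beta^{1/2}|z_jz_{i_0}|^{-K/[d(d+1)]}$ (quasi-bound, giving $(iii)$) or below it (then it is genuinely bound and satisfies $(iv)$ with $\delta'$). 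Because the measure on each small ball is comparable to a constant times $\alpha_1$ or $\beta$, each ``bound'' step costs only a fixed constant and we still get $\sigma(\widetilde P_j)\gtrsim \alpha_1$ (resp. $\beta$). Finally, since $\beta\ll\alpha_1$, by choosing the constants appropriately one checks that no index can be accepted as a new \emph{odd} free index that is genuinely separated — the odd-index sets $P_{2k-1}$ have mass only $\sim\beta$, which is too small to escape all the $\delta\alpha_1^{1/2}$-balls once $\beta/\alpha_1$ is small — so every odd index ends up bound or quasi-bound, forcing each band's free index to be even or $1$; counting the free/quasi-free indices against the $d$ even-or-$1$ indices among $\{1,\dots,2d\}$ and discarding the shortest initial segment of bands gives exactly $d$ free-or-quasi-free indices and a consecutive tail $\{2d-k+1,\dots,2d\}$ with $d\le k<2d$, which is item $(i)$.

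The main obstacle is the bookkeeping that makes the three scales $\tilde c\beta^{1/2}$, $\delta'\alpha_1^{1/2}$, $\delta\alpha_1^{1/2}$ mutually consistent across \emph{all} pairs in a band simultaneously: when a third index is added to a band one must ensure the pairwise estimates $(iii)$, $(iv)$ hold not just against the free index but with the right comparability of the mixed weights $|z_iz_j|^{-K/[d(d+1)]}$, and that the separation $(ii)$ between indices in \emph{different} bands is not destroyed when we pass to subsets — this is handled, as in \cite{S2, Ch1}, by always measuring separation in the normalized metric and by using $(i)$--$(iii)$ of Lemma \ref{setup} together with the lower bounds $|z_i|\gtrsim\gamma^\nu$ to convert between $|z_i|$, $|z_j|$ and $|z_iz_j|^{1/2}$ up to constants. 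The complex setting introduces no new difficulty here beyond replacing intervals on the real line by disks in $\mathbb C$; the covering/pigeonhole arguments are purely metric and the weight $|z|^{4K/[d(d+1)]}$ is radial, so the estimates from the proof of Lemma \ref{setup} transfer verbatim. The only genuinely quantitative point is the final comparison $\delta'<\varepsilon\delta$, which must survive all the constant losses; since there are only boundedly many ($\le 2d$) stages and each loss is a fixed power of $2$ or of the covering constant, $\delta$ and $\delta'$ can be chosen at the end to satisfy $0<c_{d,\varepsilon}<\delta'<\varepsilon\delta$ as required.
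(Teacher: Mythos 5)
Your proposal captures the overall flavor of the Christ--Stovall band construction (iteratively thinning the product set, measuring separation in the normalized metric $|z_iz_j|^{-K/[d(d+1)]}$, and re-using the ball-mass computations from Lemma \ref{setup}), but it contains a genuine logical error at the key step that produces item $(i)$.

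You claim that ``the odd-index sets $P_{2k-1}$ have mass only $\sim\beta$, which is too small to escape all the $\delta\alpha_1^{1/2}$-balls once $\beta/\alpha_1$ is small,'' and you use this to conclude that every odd index ends up bound or quasi-bound, so that every band's free index is even or $1$. This reasoning confuses measure with location. The set $P_j$ having small $\sigma$-mass $\sim\beta$ says nothing about whether $P_j$ lies inside the union of the $\delta\alpha_1^{1/2}$-balls around the previously fixed $z_i$: a small set can sit entirely in the complement of finitely many small balls, and indeed Lemma \ref{setup}$(iv)$ only gives a \emph{lower} bound $|z_j-z_i|\gtrsim\beta^{1/2}|z_i|^{-2K/[d(d+1)]}$, so $z_j$ may well be far from all the $z_i$. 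The paper's own proof explicitly acknowledges this: after building the bands $b(j)$ for even $j$, remaining odd indices that are \emph{not} captured by any ball are allowed to start their own bands (``If there are still remaining indices, we choose the smallest one and repeat the procedure. By this we can construct bands for odd indices''). What the paper actually shows is that each even index and the index $1$ \emph{must} be free (this uses \eqref{separate}, i.e.\ item $(v)$ of Lemma \ref{setup}), so there are \emph{at least} $d+1$ free indices before discarding. The exact count $\mathcal M+\mathcal N=d$ is then achieved by the discarding procedure of Step 3, which fixes $z_1$, reclassifies $\{2,\dots,2d\}$, and iterates: each discard changes the count by $-1$, $0$ or $+1$, and since the count starts $\ge d+1$ and would reach $\le d$ after discarding down to $k=d$, one hits $d$ exactly at some intermediate $k$. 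Your ``discard the shortest initial segment'' heuristic doesn't reproduce this argument, because it is built on the false premise that there are at most $d$ (or $d+1$, depending on your miscount of the ``even-or-$1$'' indices) free indices to begin with.

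There is also a second gap you gloss over: getting $(ii)$, $(iii)$, $(iv)$ to hold \emph{simultaneously}. If $j_k$ is reached from $j$ by a chain of length $k\le d$ inside $\delta\alpha_1^{1/2}$-balls, the triangle inequality only gives $|z_{j_k}-z_j|\lesssim d\,\delta\,\alpha_1^{1/2}|z_{j_k}z_j|^{-K/[d(d+1)]}$, which does \emph{not} imply $(iv)$ with $\delta'<\varepsilon\delta$. The paper handles this by testing whether $(iv)$ holds on half the mass; if not, it replaces $\delta$ by $\delta'/d$ and re-runs Step 1 (possibly breaking the offending band), and this iteration is what makes the two scales compatible. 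Your statement that the constants ``can be chosen at the end'' does not address the fact that the band \emph{structure itself} depends on $\delta$, so $\delta$ cannot simply be tuned a posteriori.
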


\begin{proof}
It is not enough to arrange the absolute values of $z_1,\dots,z_{2d}$ in order, since two variables with the same size can be separated. Thus our
approach is slightly different at the beginning.
We start by observing that $z_j$'s for even $j$ are separated from each other.
In other words, all the balls centered at $z_j$ for even $j$ can be made mutually disjoint by choosing the radii appropriately.

{\sl Step 1.}
By Lemma \ref{setup}, there exist constants $c$ such that
\begin{align}\label{separate}
|z_i - z_j| > c  \alpha_1^{\frac12}  |z_i z_j|^{-\frac{K}{d(d+1)}}
\end{align}
for all even indices $j\le 2d$ and  for all $i < j$.
%
In fact, we have that $|z_i - z_j| \gtrsim \alpha_1^{\frac12}   |z_i|^{-\frac{2K}{d(d+1)}}  $ for even $j \le 2d-2$ and $i < j $ by $(v)$ in Lemma \ref{setup}.
If $|z_i| \lesssim |z_j|$,
it follows that $|z_i - z_j| \gtrsim \alpha_1^{\frac12}  |z_i z_j|^{-\frac{K}{d(d+1)}}  $.
If $|z_i | \gg |z_j| $, then we get $|z_i - z_j| \gtrsim |z_i| \gg |z_j| $.
Since $|z_j| \ge (4\pi\nu)^{-\nu}\gamma^\nu$ by Lemma \ref{setup} and $\nu = \frac{d(d+1)}{4K + 2d(d+1)}$,
one can see that $|z_i - z_j| \gtrsim |z_j| \gtrsim \alpha^{\frac12}|z_j|^{-\frac{2K}{d(d+1)}} \gg \alpha^{\frac12} |z_i z_j|^{-\frac{K}{d(d+1)}} $.

Next, let us consider the case of $j=2d$. If $|z_{2d} | \sim |z_i|$ for any $i < 2d$, then $(vi)$ in Lemma \ref{setup} gives $|z_{2d} - z_i| \gtrsim \alpha_1^{\frac12}  |z_i z_{2d}|^{-\frac{K}{d(d+1)}}$. (Note that we are assuming here that $\alpha_1 \le \alpha_2$.)
For the case $|z_{2d}| \ll |z_i|$ or $|z_{2d}| \gg |z_i|$, we see that
$|z_{2d} - z_i| \gtrsim |z_{2d}| = |z_{2d}|^{1 +\frac{K}{d(d+1)}} |z_i|^{\frac{K}{d(d+1)}} |z_{i} z_{2d}|^{-\frac{K}{d(d+1)}}$.
Since $|z_{2d}|\gtrsim \alpha_2^{\nu}\ge \alpha_1^\nu$ and $|z_i| \gtrsim \gamma^{\nu}\ge \alpha_1^\nu$ for $i < 2d$, it follows that $$|z_{2d} - z_i| \gtrsim \alpha_1^{\nu(1 + \frac{2K}{d(d+1)})}  |z_i z_{2d}|^{-\frac{K}{d(d+1)}} = \alpha_1^{\frac12} |z_i z_{2d}|^{-\frac{K}{d(d+1)}} .$$
Therefore, \eqref{separate} is valid for any even $j$ and $i < j$.

Similarly, there exists a constant $\tilde c>0$ such that
\begin{equation}\label{separate-beta}
|z_i - z_j| > \tilde c \beta_1^{\frac12}  |z_i z_j|^{-\frac{K}{d(d+1)}}
\end{equation}
for any odd $j$ and $i < j$.


Let us define a ball centered at $z_j$ with small $\delta < c$, to be chosen later, by setting
\begin{align*}
B_{\delta, \alpha_1}(z_j) = \{ z : |z-z_j| \le \delta  \alpha_1^{\frac12}   |z\, z_j|^{-\frac{K}{d(d+1)}}  \}.
\end{align*}



For each even index $j$, let $b(j)$ be a subset of $\{1,2,\dots,2d\}$ such that
\begin{itemize}
\item $j \in b(j)$,
\item $j_1 \in b(j)$ if $z_{j_1} \in B_{\delta, \alpha_1}(z_{j})$,
\item $j_{k+1} \in b(j)$ if $z_{j_{k+1}} \in B_{\delta,\alpha_1}(z_{j_k})$ for some $j_k\in b(j)$.
\end{itemize}

If there is no $j_1$ such that $z_{j_1} \in B_{\delta, \alpha_1}(z_{j})$, then we set $b(j) = \{ j\}$.

First we show how to construct $b(j)$ for each even $j$.
By \eqref{separate}, $b(2d)= \{ 2d \}$ holds.
In fact, we have that
\[
|z_i - z_{2d} | \ge c \alpha_1^{\frac12} |z_i z_{2d}|^{-\frac{K}{d(d+1)}}
\]
for all $i < 2d$.

For $z_{2d-2}$,  \eqref{separate} holds for all odd and even $i < j=2d-2$.
On the other hand, $z_{2d-1}$ may be contained in $B_{\delta,\alpha_1}(z_{2d-2})$.
In general, each odd $i$ may be contained in either exactly one $B_{\delta,\alpha_1}(z_{j})$ satisfying $i > j$ or none of them.
Then each $B_{\delta,\alpha_1}(z_j)$ may contain odd indices greater than $j$.
Those odd indices belong to $b(j)$.
If $B_{\delta,\alpha_1}(z_j)$ has no odd index, then let $b(j) = \{j\}$.

Let $j_{1}$ be one of the odd indices contained in $b(j)$,
and consider $B_{\delta,\alpha_1}(z_{j_1})$.
If there  exists  an odd index $\ell$ such that $z_\ell \in B_{\delta,\alpha_1}(z_{j_1})$, then we denote it by $j_{2}$ (of course, $j_{2} \in b(j)$).
For $z_{j_2}$ we consider $B_{\delta,\alpha_1}(z_{j_2})$ and repeat the process as above.
Hence, each $b(j)$ consists of a unique even index $j$ and some odd indices greater than $j$.

Now we consider indices which belong to none of the $b(j)$ for even $j$.
In this case, we choose the smallest index among the remaining indices.
By \eqref{separate} this index must be $1$.
 In fact, $B_{c/3,\alpha_1}(z_1)$ is disjoint from the other $B_{c/3,\alpha_1}(z_j)$ for even $j$.
By choosing sufficiently small $\delta$, it is valid that $B_{\delta, \alpha_1}(z_i) \subset B_{c/3,\alpha_1}(z_j)$ for all $i \in b(j)$.
Then we can construct $b(1)$ in the same manner as above.

If there are still remaining indices, we choose the smallest one and repeat the procedure.
By this we can construct  bands for odd indices.


Hence the index set $\{1,\dots,2d \}$ can be decomposed into the bands $b(j)$'s.
Here $j$ represents the free index in $b(j)$.
Each $b(j)$ has less than $d$ elements because of \eqref{separate}.
(The case where $b(j)$ has $d$ elements can only occur when $b(1)$ contains all odd indices.)
Also the free index is the smallest in the band.

Now we can check that the  properties $ (ii)$ and $(iii)$ hold.
Let us assume that $j' \in b(j)$ and $k' \in b(k)$ for $j\neq k$,  i.e. $j'$ and $k'$ are in different bands.
From the construction of $b(j)$ and $b(k)$,
it follows that
$z_{j'} \notin B_{\delta,\alpha_1}(z_{k'})$ and $z_{k'} \notin B_{\delta,\alpha_1}(z_{j'})$.
This immediately  implies $(ii)$ for $j'$ and $k'$.

If $j'$ is quasi-bound to $j$, i.e. $b(j) = \{j, j'\}$, then $j'$ must be an odd number.
Hence $(iii)$ follows from  the construction of $b(j)$ and \eqref{separate-beta}.

{\sl Step 2.}
First, we need to verify that $|z_{j_k}| \sim |z_{j_{k+1}}|$ whenever $j_k, j_{k+1} \in b(j)$.
Since $z_{j_{k+1}} \in B_{\delta,\alpha_1}(z_{j_k})$ and $|z_j| \ge (4\pi\nu)^{-\nu} \alpha_1^{\nu}$ for all $j$, it follows that
\begin{align*}
|z_{j_k} | &\ge |z_{j_{k+1}}| - |z_{j_k} - z_{j_{k+1}}|
\ge  |z_{j_{k+1}}| - \delta \alpha_1^{\frac12}  |z_{j_k} z_{j_k+1}|^{-\frac{K}{d(d+1)}} \\
&\ge  |z_{j_{k+1}}| - c  \alpha_1^{\frac12} (\alpha_1^{2\nu}) ^{-\frac{K}{d(d+1)}}
= |z_{j_{k+1}}| - c  \alpha_1^{\nu},
\end{align*}
where $c = \delta(4\pi\nu)^{2 K\nu /d(d+1)}$.
Thus we have that $|z_{j_{k+1}}| \le c \alpha_1^\nu + |z_{j_k}| \le (\delta   (4\pi\nu)^{1/2}  +1)|z_{j_k}|$.
If we exchange $z_{j_k}$ and $z_{j_{k+1}}$, then it also holds that
$|z_{j_{k}}|\le (\delta   (4\pi\nu)^{1/2}  +1)|z_{j_{k+1}}|$.
Hence we have that for any $j_k, j_l \in b(j)$
\begin{align}\label{z_jk}
|z_{j_k}| \le (1 + \bar c) |z_{j_{k+1}}| \le (1 +\bar c)^2 |z_{j_{k+2}}| \le \cdots \le (1 +\bar c)^d |z_{j_l}|,
\end{align}
where $\bar c = \delta   (4\pi\nu)^{1/2}$.
(Note that each $b(j)$ has at most $d$ elements.)
By exchanging $z_{j_k}$ for $z_{j_l}$, we finally get $(1+\bar c)^{-d}|z_{j_l}| \le |z_{j_k}| \le (1+\bar c)^d |z_{j_l}|$.
Therefore we obtain that 
\begin{align}\label{sameband}
|z_{j_k}| \sim |z_{j_l}|\, \textrm{ whenever $j_k$ and $j_l$ are in the same band.}
\end{align}
(The implicit constant can be adjusted by choosing sufficiently small $\delta$ when we use \eqref{sameband} in the proof of Lemma \ref{slice-jaco}.)

 If $j_k$ is bound to $j$, we see that
\begin{align*}
|z_{j_k} - z_j| & \le |z_{j_k} - z_{j_{k-1}}| + \cdots + |z_{j_1} - z_j| \\
& \le \delta  \alpha_1^{\frac12} |z_{j_k} z_{j_{k-1}}|^{-\frac{K}{d(d+1)}} + \cdots + \delta  \alpha_1^{\frac12} |z_{j_1} z_{j}|^{-\frac{K}{d(d+1)}} \\
& \le \delta  \alpha_1^{\frac12} d  (1+\bar c)^{\frac{K}{d+1}}  |z_{j_k} z_j|^{-\frac{K}{d(d+1)}}
\end{align*}
by \eqref{z_jk}.
Then $(iv)$ is not guaranteed.
If $(iv)$ holds on a subset $\mathcal Z' \subset \mathcal Z := P_1\times P_2 \times \dots \times P_{2d}$ with $\sigma(\mathcal Z') \ge \sigma(\mathcal Z)/2$,
then we proceed to the next step.

Otherwise, there exist a subset $\mathcal Z'' \subset \mathcal Z$ satisfying $\sigma(\mathcal Z'') \ge \sigma(\mathcal Z)/2$, a band $b(j)$, and an index
$j_0$ such that $|z_{j_0} - z_j| \ge \delta' \alpha_1^{\frac12} |z_{j_0} z_j|^{-\frac{K}{d(d+1)}}$.
Then we replace $\delta$ by $\delta'/d$, with which we repeat {\sl Step 1} 
until we get
\begin{align}\label{bound}
|z_{j_k} - z_j| <
\delta' \alpha_1^{\frac12}  |z_{j_k} z_j|^{-\frac{K}{d(d+1)}}
\end{align}
for each bound index $j_k \in b(j)$ on a subset $\widetilde{\mathcal Z} \subset \mathcal Z$ satisfying $\sigma(\widetilde{\mathcal Z}) \ge \sigma(\mathcal
Z)/2$.

{\sl Step 3.} 
Adopting the notations in \cite{Ch1}, we denote by $\mathcal M$, $\mathcal N$ the number of free and quasi-free indices, respectively.
We have at least $d+1$ free indices, which are even indices and 1, from the previous steps.
Using a projection repeatedly, we will reduce the value $\mathcal M +\mathcal N$ to $d$, which yields $(i)$.
First, we discard the index 1 by fixing $z_1 \in P_1$ and classify $\{2,\dots,2d\}$ as free, quasi-free and bound indices.
Then the number $\mathcal M +\mathcal N$ can decrease by 1.
Of course, it can be unchanged or increased by 1 when 1 was the free index of a band with two or three elements.
After discarding indices $\{1,2,\dots,2d-k\}$ appropriately, we obtain a band structure on $\{2d-k+1,\dots,2d\}$ with $\mathcal M + \mathcal N =d$.
Note that discarding an index does not affect properties $(ii)-(iv)$.
We denote by $\omega \in \mathbb C^k$ a set of $(z_{2d-k+1},\dots,z_{2d}) \in P_{2d-k+1} \times \cdots \times P_{2d}$ such that $\sigma(\omega) \geq c
\sigma(P_{2d-k+1} \times \cdots \times P_{2d})$ is valid for some constant $c<1$. Thus we obtain $\sigma(\omega) \sim \sigma( P_{2d-k+1} \times \cdots \times
P_{2d}) \gtrsim \alpha_1^{ \lceil k/2 \rceil} \beta^{ \lfloor k/2 \rfloor} (\alpha_2/ \alpha_1)$.
\end{proof}

\subsection{The slice argument}
Let $\Lambda=\{\lambda_1,\dots,\lambda_d\} \subset \{2d-k+1, \dots, 2d\}$ be a set of all indices which are free or quasi-free.
Also we set $\tau = (\tau_1,\dots,\tau_d) \in \mathbb C^d$ with $\tau_i = z_{\lambda_i}$, $\lambda_i \in \Lambda$.
For $\lambda'_j \in \Lambda' := \{2d-k+1,\dots,2d\}\setminus\Lambda$, $1\le j\le k-d$, there is a free index $\lambda_i\in\Lambda$ to which $\lambda'_j$ is
bound.
Let $s_j = (z_{\lambda'_j} - z_{\lambda_i})   z_{\lambda_i}^{\frac{2K}{d(d+1)}}$ and $s = (s_1,\dots, s_{k-d}) \in \mathbb C^{k-d}$.
It is clear that the map $z := (z_{2d-k+1},\dots,z_{2d}) \mapsto (\tau, s)$ is a diffeomorphism, so its inverse $z(\tau,s)$ exists and is differentiable.
Set $x_0 + H_{2d} (z_1,\dots,z_{2d-k},z) =: H(z(\tau,s))$, then $H(\omega) \subset E_2$ for $\omega$ as in Lemma \ref{band}. (Recall that $H_{2d}$ is defined in \eqref{def_H}.)

For any $s \in \mathbb C^{k-d}$, we consider a slice $\omega_s = \{ \tau : z(\tau, s) \in \omega \} \subset \mathbb C^d$. By the construction of $P_{2d}$ in
the proof of Lemma \ref{setup} ($z_{2d}$ is clearly one exponent of $\tau$), $H(\omega_s)$ is contained in $E_2$ for each $s$.
By B\'ezout's theorem, for each $s \in \mathbb C^{k-d}$ there are finitely many preimages under the map $\tau\mapsto H(z(\tau,s))$. Thus we have
\begin{align}\label{E2}
|E_2| \gtrsim \int_{\omega_s} \Big|\det\left(\frac{\partial H(z(\tau,s)) }{\partial \tau}\right) (\tau,s)\Big|^2 d\mu(\tau).
\end{align}

The following lemma gives a lower bound for the integrand in \eqref{E2}.
\begin{lemma}\label{slice-jaco}
For a given $\varepsilon >0$, there exist
a band structure on $\{2d-k+1,\dots,2d\}$ and a set $\omega$ which satisfy Lemma \ref{band}.
Then for all $(\tau,s) \in \omega$, and for some $C >0$,
\begin{align}\label{slice-jaco-bound}
\Big|\det\left(\frac{\partial H(z(\tau,s)) }{\partial \tau}\right) \Big|^2 \ge C \alpha_1^{\frac{d(d-1)}{2}} \left( \frac{\beta }{\alpha_1} \right)^{\mathcal N}
\left(\frac{\alpha_2}{\alpha_1}\right)^{ d-1 } \prod_{ i=1}^{d} |\tau_i|^{\frac{4K}{d(d+1)}}
\end{align}
holds.
Here $\mathcal N$ is the number of quasi-free indices associated to $\omega$, and $\varepsilon$ and $C$ depend only on $d, \delta$ by Lemma \ref{band}.
\end{lemma}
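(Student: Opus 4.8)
The plan is to expand the holomorphic Jacobian $\partial H(z(\tau,s))/\partial\tau$ column by column, reduce \eqref{slice-jaco-bound} to the Vandermonde‑type lower bound for $|J_{\mathbb C}(\tau_1,\dots,\tau_d)|$ provided by Lemma \ref{jaco_mono}, and then feed in the separation estimates of Lemmas \ref{band} and \ref{setup} together with an exponent count. First I would write down the columns. Since $H(z(\tau,s))=y_0+H_{2d}(z_1,\dots,z_{2d-k},z(\tau,s))$, the $i$-th column of $\partial H/\partial\tau$ is $c_i=\sum_j(-1)^{j+1}h'(z_j)\,\partial z_j/\partial\tau_i$, where only $j=\lambda_i$ (with $\partial z_{\lambda_i}/\partial\tau_i=1$) and the bound indices $\lambda'$ of $b(\lambda_i)$ contribute; for the latter $z_{\lambda'}=\tau_i+s_\ast\,\tau_i^{-2K/(d(d+1))}$, so $\partial z_{\lambda'}/\partial\tau_i=1-\tfrac{2K}{d(d+1)}\zeta_{\lambda'}$ with $\zeta_{\lambda'}:=(z_{\lambda'}-\tau_i)/\tau_i$. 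On $\omega$ one has $|z_{\lambda'}-\tau_i|\lesssim\delta'\alpha_1^{1/2}|\tau_i|^{-2K/(d(d+1))}$ (using $|z_{\lambda'}|\sim|\tau_i|$ from \eqref{sameband}) and $|\tau_i|\gtrsim\alpha_1^{\nu}$, hence $\alpha_1^{1/2}|\tau_i|^{-2K/(d(d+1))-1}\lesssim1$ and $|\zeta_{\lambda'}|\lesssim\delta'$. Writing $v(z)=(1,z,\dots,z^{d-2},z^{N-1})$ as a column and factoring out $\operatorname{diag}(1,2,\dots,d-1,N)$, the determinant to be bounded equals a fixed constant times $\det[\tilde c_1|\cdots|\tilde c_d]$, where $\tilde c_i=\epsilon_i'\,v(\tau_i)+R_i$ with $\epsilon_i'=\sum_{j\in b(\lambda_i)}(-1)^{j+1}\partial z_j/\partial\tau_i$ and $R_i=\sum_{\lambda'}(-1)^{\lambda'+1}(\partial z_{\lambda'}/\partial\tau_i)\bigl(v(z_{\lambda'})-v(\tau_i)\bigr)$, the sum over the bound indices $\lambda'$ of $b(\lambda_i)$ (so $R_i=0$ unless $b(\lambda_i)$ contains bound indices).

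The crux, which I expect to be the main obstacle, is the estimate $|\det[\tilde c_1|\cdots|\tilde c_d]|\gtrsim|\det[v(\tau_1)|\cdots|v(\tau_d)]|$ — the complex‑variable form of Christ's band‑structure cancellation \cite{Ch1,S2}. Here one uses: (a) every non‑free index of a band is odd while the free index is even or $1$, so the integer part of $\epsilon_i'$ equals $r-1$ or $r+1$ (with $r\ge2$ the number of bound indices) or $\pm1$ otherwise, in every case nonzero, whence $|\epsilon_i'|\ge1-O(\delta')\gtrsim1$; (b) Taylor‑expanding the polynomial $v$ about $\tau_i$ gives $R_i=\sum_{p\ge1}a_{i,p}\,v^{(p)}(\tau_i)$ with $|a_{i,p}|\lesssim(\delta'\alpha_1^{1/2}|\tau_i|^{-2K/(d(d+1))})^p/p!$, and expanding $\det[\tilde c_1|\cdots|\tilde c_d]$ multilinearly yields $\bigl(\prod_i\epsilon_i'\bigr)\det[v(\tau_1)|\cdots|v(\tau_d)]$ plus terms in which some columns are replaced by derivatives $v^{(p)}(\tau_i)$. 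Using the identity $\det[v(\tau_1)|\cdots|v(\tau_d)]=\pm h_K(\tau)\prod_{a<b}(\tau_b-\tau_a)$ (with $h_K$ the complete homogeneous symmetric polynomial of degree $K$), so that $\partial_{\tau_i}$ acts on it through $\partial_{\tau_i}h_K/h_K-\sum_{b\ne i}(\tau_b-\tau_i)^{-1}$, where $|\partial_{\tau_i}h_K/h_K|\lesssim|\tau_i|^{-1}$ (Lemma \ref{jaco_mono} bounds $h_K$ below by $\max_i|\tau_i|^K$ on the sector) and $|\tau_b-\tau_i|^{-1}\lesssim\delta^{-1}\alpha_1^{-1/2}|\tau_b\tau_i|^{K/(d(d+1))}$ (Lemma \ref{band}(ii), valid for every $b\ne i$ since a band with bound indices contains no other free or quasi‑free index — and using $|\tau_b-\tau_i|\sim|\tau_b|$ when $|\tau_b|\gg|\tau_i|$), one checks each such correction term is bounded by a positive power of $C\delta'/\delta$ times $|\det[v(\tau_1)|\cdots|v(\tau_d)]|$. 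Summing the finitely many terms and taking $\varepsilon$ (hence $\delta'/\delta<\varepsilon$) small in terms of $d$ gives $|\det(\partial H/\partial\tau)|\gtrsim|\det[v(\tau_1)|\cdots|v(\tau_d)]|\sim|J_{\mathbb C}(\tau_1,\dots,\tau_d)|$.

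Finally I would insert the geometry. By Lemma \ref{jaco_mono}, $|J_{\mathbb C}(\tau_1,\dots,\tau_d)|^2\gtrsim\max_i|\tau_i|^{2K}\prod_{a<b}|\tau_b-\tau_a|^2$. Partition the $\binom d2$ pairs into the $\mathcal N$ pairs formed by a quasi‑free index with its free partner, the $d-1$ pairs containing the index $2d$ (whose band is $\{2d\}$, hence disjoint from the former), and the rest: on the rest, $|\tau_b-\tau_a|^2\gtrsim\alpha_1|\tau_a\tau_b|^{-2K/(d(d+1))}$ (Lemma \ref{band}(ii)); on the quasi‑free pairs, $\gtrsim\beta|\tau_a\tau_b|^{-2K/(d(d+1))}$ (Lemma \ref{band}(iii)); on the pairs with $\tau_\ast:=z_{2d}$, Lemma \ref{setup}(vi) (splitting on $|\tau_b|\lessgtr\alpha_2^{\nu}$ and using $\nu=\tfrac{d(d+1)}{4K+2d(d+1)}$) gives $\gtrsim\alpha_2\max(|\tau_\ast|,|\tau_b|)^{-4K/(d(d+1))}$. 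Assigning the factor $\max_i|\tau_i|^{2K}$ entirely to the largest $|\tau_i|$ and collecting powers — the $\max(|\tau_\ast|,|\tau_b|)$ factors and the surplus in $\max_i|\tau_i|^{2K}$ cancel so that, exactly as in the nondegenerate count, each $|\tau_i|$ gets exponent $\tfrac{4K}{d(d+1)}$, any residue being absorbed by $|\tau_i|\gtrsim\alpha_1^{\nu}$ — one obtains
\[
\Bigl|\det\Bigl(\tfrac{\partial H(z(\tau,s))}{\partial\tau}\Bigr)\Bigr|^2\gtrsim\alpha_1^{\binom d2-\mathcal N-(d-1)}\beta^{\mathcal N}\alpha_2^{d-1}\prod_{i=1}^d|\tau_i|^{\frac{4K}{d(d+1)}}=\alpha_1^{\frac{d(d-1)}2}\Bigl(\tfrac{\beta}{\alpha_1}\Bigr)^{\mathcal N}\Bigl(\tfrac{\alpha_2}{\alpha_1}\Bigr)^{d-1}\prod_{i=1}^d|\tau_i|^{\frac{4K}{d(d+1)}},
\]
which is \eqref{slice-jaco-bound}. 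The delicate point throughout is the second paragraph: a Jacobian column attached to a band with bound indices is a signed sum of $h'(z_j)$ with nearby $z_j$, and one must show it behaves, up to summably small errors, like a single $h'(\tau_i)$; the inputs $\delta'<\varepsilon\delta$, $|\tau_i|\gtrsim\alpha_1^{\nu}$ and $|z_{\lambda'}-\tau_i|\lesssim\delta'\alpha_1^{1/2}|\tau_i|^{-2K/(d(d+1))}$ are exactly what make this go through.
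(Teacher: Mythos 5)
Your argument is essentially the same as the paper's: isolate the main term $\prod_i\theta_i\, J_d(\tau)$ (you call the coefficients $\epsilon_i'$ and fold the $\partial z_{\lambda'}/\partial\tau_i$ correction into them, while the paper keeps $\theta_i$ integral and puts that correction into a second error type, but the bookkeeping is equivalent), bound the error columns by $O(\varepsilon)|J_d(\tau)|$ via the factorization $J_d=\mathrm{const}\cdot V\cdot Q_{N-d}$ together with the band separation estimates (you Taylor-expand $v$ where the paper uses the fundamental theorem of calculus), and then lower-bound $|J_d(\tau)|^2$ by classifying the $\binom d2$ pairs into quasi-free pairs, pairs with $z_{2d}$, and the rest, distributing $\max_i|\tau_i|^{2K}$ to produce $\prod_i|\tau_i|^{4K/(d(d+1))}$. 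The only soft spot is the last sentence of your exponent count (``any residue being absorbed by $|\tau_i|\gtrsim\alpha_1^\nu$''): as the paper's explicit choice of $K_i$'s shows, the allocation works out \emph{exactly} and there is no residue to absorb.
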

We postpone its proof for a moment and prove Lemma \ref{d-trilinear}.

\subsubsection*{Proof of Lemma \ref{d-trilinear} }
By integrating both sides of \eqref{E2}, and by \eqref{slice-jaco-bound}, we see that
\begin{align*}
\int_{\{s : z(\tau,s)\in\omega\}}  |E_2| d\mu(s) \gtrsim \alpha_1^{\frac{d(d-1)}{2}} \left( \frac{\beta }{\alpha_1} \right)^{\mathcal N}
\left(\frac{\alpha_2}{\alpha_1}\right)^{d-1} \int_{\{s : z(\tau,s)\in\omega\}}\int_{\omega_s}d\sigma(\tau)d\mu(s).
\end{align*}
For any $s=(s_1,\dots,s_{k-d}) \in \mathbb C^{k-d}$ satisfying $z(\tau,s)\in\omega$,
\[
|s_j| = |z_{\lambda'_j} - z_{\lambda_i} ||z_{\lambda_i}|^{\frac{2K}{d(d+1)}}
\le \delta'\alpha_1^{\frac12}  |z_{\lambda'_j}  z_{\lambda_i}|^{-\frac{K}{d(d+1)}}|z_{\lambda_i}|^{\frac{2K}{d(d+1)}} \lesssim \delta' \alpha_1^{\frac12}
\]
by \eqref{bound} and \eqref{sameband}.
It follows that $\int_{\{s : z(\tau,s)\in\omega\}} d\mu(s) \lesssim \alpha_1^{k-d}$.

By reversing the change of variables $(\tau,s) \mapsto z$, we obtain that
\begin{align*}
\int_{\{s : z(\tau,s)\in\omega\}}\int_{\omega_s}d\sigma(\tau)d\mu(s) &= \int_{\{s : z(\tau,s)\in\omega\}}\int_{\omega_s} \prod_{i=1}^d
|\tau_i|^{\frac{2K}{d(d+1)}} d\mu(\tau)d\mu(s) \\
&\sim \int_\omega \prod_{j=1}^{k-d} |z_{\lambda'_j}|^{\frac{2K}{d(d+1)}}\prod_{i=1}^d |z_{\lambda_i}|^{\frac{2K}{d(d+1)}} d\mu(z)
=\sigma(\omega)
\end{align*}
since $| \det(\frac{\partial(\tau,s)}{\partial z}) |  \sim  \prod_{ j=1}^{k-d} |z_{\lambda'_j}|^{\frac{2K}{d(d+1)}} $ by \eqref{sameband}.
Thus we conclude that
\begin{align*}
|E_2| \gtrsim \alpha_1^{\frac{d(d-1)}{2}} \left( \frac{\beta }{\alpha_1} \right)^{\mathcal N} \left(\frac{\alpha_2}{\alpha_1}\right)^{d-1} \alpha_1^{d-k}
\alpha_1^{\lceil k/2 \rceil } \beta^{\lfloor k/2 \rfloor } \left(\frac{\alpha_2}{ \alpha_1}\right)
 = \alpha_1^{\frac{d(d+1)}{2}} \left( \frac{\beta_1}{\alpha_1} \right)^{\mathcal N + \lfloor k/2 \rfloor} \left( \frac{\alpha_2}{\alpha_1} \right)^d.
\end{align*}
Since $\mathcal N + \lfloor k/2 \rfloor \le d-1$ and $\beta_1 \ll \alpha_1$, this gives Lemma \ref{d-trilinear}. \qed

\subsubsection*{Proof of Lemma \ref{slice-jaco} }

Recall that $\tau_i = z_{\lambda_i}$ for $\lambda_i \in \Lambda$ and $s_j = (z_{\lambda'_j} - z_{\lambda_i})z_{\lambda_i}^{\frac{2K}{d(d+1)}} =
(z_{\lambda'_j} - \tau_i) \tau_i ^{\frac{2K}{d(d+1)}} $. Then
\begin{align*}
& H(z(\tau,s))= \\& = x_0 + H_{2d}(z_1,\dots,z_{2d-k},z(\tau,s)) \\
& = x_0 + H_{2d-k}(z_1,\dots,z_{2d-k}) + \sum_{1\le i \le d} \left(  (-1)^{\lambda_i+1} h(\tau_i) + \sum_{j \Rightarrow i} (-1)^{\lambda'_j+1} h(s_j \tau_i^{-\frac{2K}{d(d+1)}}
+ \tau_i) \right) \\
& = x_0 + H_{2d-k}(z_1,\dots,z_{2d-k}) + \sum_{1\le i \le d} \left( \theta_i h(\tau_i) + \sum_{j \Rightarrow i} (-1)^{\lambda'_j+1} ( h(s_j \tau_i^{-\frac{2K}{d(d+1)}}
+ \tau_i) - h(\tau_i) \right),
\end{align*}
where $j \Rightarrow i$ means $\lambda'_j \in \Lambda'$ is bound to $\lambda_i \in \Lambda$, and $\theta_i = (-1)^{\lambda_i+1} + \sum_{ \lambda'_j \Rightarrow i}
(-1)^{j+1}$.
Note that $\theta_i$ cannot be $0$. In fact, for each $\lambda_i$, the number of indices which is bound to $\lambda_i$ is $0$ or at least $2$.
Also, all the indices bound to each $\lambda_i$ are odd.
Thus, $\sum_{ \lambda'_j \Rightarrow i} (-1)^{j+1}$ should be at least $2$ (or $0$ if there is no
index bound to $i$). Hence $\theta_i$ cannot be $0$.

For fixed $s$, each column of $\frac{\partial H(z(\tau,s)) }{ \partial \tau}$ is given by $\theta_i h'(\tau_i) $ if there is no $j$ such that $j \Rightarrow
i$, or
\begin{align*}
&\theta_i h'(\tau_i)  + \sum_{j\Rightarrow i} (-1)^{\lambda'_j+1} \times\\ 
&\times \left( h'(s_j \tau_i^{-\frac{2K}{d(d+1)}} +\tau_i) - h'(\tau_i) - \frac{2K}{d(d+1)}
\left(\frac{s_j}{\tau_i}\right) \tau_i^{-\frac{2K}{d(d+1)}} h'(s_j \tau_i^{-\frac{2K}{d(d+1)}} +\tau_i) \right).
\end{align*}
Then by multilinearity, we have
\[
\det \left(\frac{\partial H(z(\tau,s)) }{ \partial \tau} \right)
= C \, J_d(\tau) + \textrm{error terms},
\]
where $C = \prod_{1\le i \le d} \theta_i $, and $J_d(\tau) = \det (h(\tau_1),\dots,h(\tau_d))$ is the determinant of the \emph{complex} Jacobian of the map
$(\tau_1,\dots,\tau_d) \mapsto \sum_{1\le i \le d}h(\tau_i)$.

Our claim is that the error terms can be bounded by $O(\varepsilon) \times |J_d(\tau)|$. If it is proven, we can see that $| \det (\frac{\partial
H(z(\tau,s)) }{ \partial \tau} ) | \gtrsim |J_d(\tau)|$ by choosing sufficiently small $\varepsilon$.
Recall that
\begin{align*}
|J_d(\tau)| \gtrsim \max_{1\le i \le d}|\tau_i|^K \prod_{1 \le i < j \le d} |\tau_i - \tau_j|
\end{align*}
from Lemma \ref{jaco_mono}.
We may assume that $\tau_d = z_{2d}$.
One can see that, by Lemma \ref{band} and Lemma \ref{setup},
\begin{align*}
&\prod_{1 \le i < j \le d} |\tau_i - \tau_j| \\
& \gtrsim \alpha_1^{\frac{d(d-1)}{4}} \left( \frac{\beta }{\alpha_1} \right)^{\frac{\mathcal N}{2}} \left( \frac{\alpha_2}{\alpha_1} \right)^{\frac{d-1}{2}}
\prod_{ i=1 }^{d-2} |\tau_i|^{-\frac{2K}{d(d+1)}({d-1-i})}
\prod_{j=1}^{d-1}|\tau_j|^{-\frac{2K}{d(d+1)}\varepsilon_j}|\tau_{d}|^{-\frac{2K}{d(d+1)}(1-\varepsilon_j)}\\
&= \alpha_1^{\frac{d(d-1)}{4}} \left( \frac{\beta }{\alpha_1}  \right)^{\frac{\mathcal N}{2}} \left( \frac{\alpha_2}{\alpha_1} \right)^{\frac{d-1}{2}}
\prod_{ i=1 }^{d-2} |\tau_i|^{-\frac{2K}{d(d+1)}(d-1-i+\varepsilon_i)}
|\tau_{d-1}|^{-\frac{2K}{d(d+1)}\varepsilon_{d-1}}
\prod_{j=1}^{d-1}|\tau_d|^{-\frac{2K}{d(d+1)}(1-\varepsilon_j)},
\end{align*}
where $\varepsilon_j= 0$ if $|\tau_j| < \frac12 (4 \pi \nu)^{-\nu}\alpha_2^\nu$, and $\varepsilon_j =1$ otherwise. (See $(vi)$ in Lemma \ref{setup}.)

Also it is obvious that
\begin{align*}
\max_{1\le i \le d}|\tau_i|^{K} \ge \prod_{1\le i\le d}|\tau_i|^{ K_i },
\end{align*}
for some $K_i > 0 $ satisfying $\sum_{i=1}^d K_i = K$.
By choosing appropriate $K_i$'s  to cancel out the exponents  $\varepsilon_j$'s, we can obtain \eqref{slice-jaco-bound}.
In fact, one can choose $K_i = \frac{2K}{d(d+1)}(d-i +\varepsilon_i)$ for $1\le i \le d-2$, $K_{d-1} = \frac{2K}{d(d+1)}(1 +\varepsilon_{d-1})$, and $K_{d} =
\frac{2K}{d(d+1)}(d-\sum_{j=1}^{d-1}\varepsilon_j)$ satisfying $\sum_{i=1}^{d} K_i = K$.
Hence this gives the desired inequality
\eqref{slice-jaco-bound}.

Now we turn to the error terms.
It suffices to consider two types of error terms, which are
\begin{align}\label{1st-type}
\det( h'(\tau_1),\dots,h'(\tau_{i-1}),h'(u_{j(i)} + \tau_i) - h'(\tau_i),\dots,h'(u_{j(d)} + \tau_d) - h'(\tau_d)),
\end{align}
and
\begin{align}\label{2nd-type}
\det( h'(\tau_1),\dots,h'(\tau_{i-1}),\frac{u_{j(i)}}{\tau_i}h'(u_{j(i)} + \tau_i),\dots,\frac{u_{j(d)}}{\tau_d} h'(u_{j(d)} + \tau_{d} )),
\end{align}
where $u_{j(i)} = s_j \tau_i^{-\frac{2K}{d(d+1)}}$ for some $j \Rightarrow i$.

\bigskip

{\sl An estimate for the second type \eqref{2nd-type}.}
First, we shall find an upper bound of $\eqref{2nd-type}$.
Note that $u_{j(i)} + \tau_i = u_{j(i)} + z_{\lambda_i}= z_{\lambda'_j}$ such that $\lambda'_j$ is bound to $\lambda_i$. (See the definition of $s_j$ at the beginning of the proof.) Using $(iv)$ in Lemma \ref{band} or \eqref{bound}, we observe
that
\begin{align}\label{uji}
|u_{j(i)}| = | z_{\lambda'_j} - z_{\lambda_i} | < \varepsilon \delta \alpha_1^{\frac12}
 |z_{\lambda'_j} z_{\lambda_i}|^{-\frac{ K}{d(d+1)}} \lesssim \varepsilon\delta
\alpha_1^{\frac12}\alpha_1^{-\frac{2K}{d(d+1)}\nu} = \varepsilon\delta\alpha_1^\nu
\end{align}
since we assume that $|z_i| \gtrsim \gamma^\nu = \max\{\alpha_1,\beta_1\}^\nu$ for $1\le i \le 2d-1$, and $|z_{2d}| \gtrsim \alpha_2^\nu \gtrsim
\alpha_1^\nu$ in Lemma \ref{setup}. (Recall that $\nu = d(d+1)/(4K + 2d(d+1))$.) It follows that
\begin{align}\label{ujii}
|u_{j(i)}| \lesssim \varepsilon \delta |z_{\lambda_i}| = \varepsilon \delta |\tau_{i}|
\end{align}
whenever $j \Rightarrow i$.

Thus we see that
\begin{align*}
 | \eqref{2nd-type} | \lesssim (\varepsilon\delta)^{d-i+1} |\det (h'(\tau_1),\dots, h'(\tau_{i-1}), h'(u_{j(i)} +\tau_i),\dots,h'(u_{j(d)} +\tau_d) )|.
\end{align*}
By \eqref{sameband}, we see that
\begin{align}\label{sim}
 | u_{j(l)} + \tau_l | =| z_{\lambda'_j} | \sim |z_{\lambda_l}| = |\tau_l|
\end{align}
for $i \le l \le d$ and $j\Rightarrow i$.

Recall that if $h(z) = (z,z^2,\dots,z^{d-1},z^N)$, then
\begin{align}\label{jaco}
\det(h'(z_1),\dots,h'(z_d)) = N\, (d-1)! \, V(z_1,\dots,z_d) \, Q_{N-d}(z_1,\dots,z_d),
\end{align}
where $V(z_1,\dots,z_d) = \prod_{1\le i < j \le d} (z_j - z_i)$ is the complex Vandermonde determinant and $Q_m$ is a homogeneous monic polynomial of degree
$m$ defined by
\[  Q_m(z_1, \cdots, z_d) =
\sum_{a_1 + \cdots +a_{d} = m} z_1^{a_1} \cdots z_d^{a_d} .\]
We refer to Section 3 in \cite{BH} for further details of \eqref{jaco}.

If we set $z_{l'} : = z_{\lambda'_j} = u_{j(l)} + \tau_l$,  we obtain that
\begin{align*}
|\eqref{2nd-type}|\lesssim (\varepsilon\delta)^{d-i+1} |V(\tau_1,\dots,\tau_{i-1},z_{i'},\dots,z_{d'})|
\times |Q_{N-d}(\tau_1,\dots,\tau_{i-1},z_{i'},\dots,z_{d'})|.
\end{align*}

We first show that
\begin{align}\label{Vandermonde}
|V(\tau_1,\dots,\tau_{i-1},z_{i'},\dots,z_{d'})| \lesssim |V(\tau_1,\dots,\tau_d)|.
\end{align}
To see this, we will show that $|\tau_i - z_{l'}|\lesssim |\tau_i - \tau_l|$ and $|z_{i'} - z_{l'}| \lesssim |\tau_i - \tau_l|$ for $i\neq l$.
By the triangle inequality, it suffices to show that  $|\tau_l - z_{l'}| \lesssim \varepsilon |\tau_i -\tau_l|$ for $i \neq l$.

If $|\tau_i| \ll |\tau_l|$ or $|\tau_l| \ll |\tau_i|$, we have $|\tau_i - \tau_l| \gtrsim |\tau_l|$.
By \eqref{ujii} we also have that $|\tau_l - z_{l'}| \lesssim \varepsilon \delta |\tau_l|$. Hence we obtain that $|\tau_l - z_{l'}| \lesssim \varepsilon \delta |\tau_i - \tau_l|$ as desired.

If $|\tau_i| \sim |\tau_l|$, then \eqref{bound} and \eqref{sim} gives
\begin{align*}
|\tau_l - z_{l'}|
& \le  \varepsilon \delta \alpha_1^{\frac12} |\tau_l z_{l'}|^{-\frac{K}{d(d+1)}}
\sim  \varepsilon \delta \alpha_1^{\frac12}(|\tau_l|)^{-\frac{2K}{d(d+1)}}
\sim \varepsilon \delta \alpha_1^{\frac12} |\tau_l \tau_i|^{-\frac{K}{d(d+1)}} \\
& <  \varepsilon |\tau_i - \tau_l|.
\end{align*}
Here the last inequality holds by $(ii)$ in Lemma 3.5 since $i$ and $l$ are in different bands.
Hence we obtain \eqref{Vandermonde}.

Also, we obtain from \eqref{sim} that
\begin{align*}
| Q_{N-d}(\tau_1,\dots,\tau_{i-1},z_{i'},\dots,z_{d'}) |
&\le \sum_{a_1+\cdots+a_d = N-d} |\tau_1|^{a_1}\cdots|\tau_{i-1}|^{a_{i-1}} |z_{i'}|^{a_i} \cdots |z_{d'}|^{a_d} \\
& \sim \sum_{a_1+\cdots+a_d = N-d} |\tau_1|^{a_1}\cdots|\tau_{d}|^{a_{d}}
 \lesssim \max_{1\le i \le d} |\tau_i|^{N-d}.
\end{align*}

Therefore it follows that
\begin{align*}
|\eqref{2nd-type}| \lesssim (\varepsilon\delta)^{d-i+1}\max_{1\le i \le d} |\tau_i|^{N-d} |V(\tau_1,\dots,\tau_d)|
\lesssim (\varepsilon\delta)^{d-i+1} |J_d(\tau)|.
\end{align*}
This finishes the error estimate for the second type.

\bigskip

{\sl An estimate for the first type \eqref{1st-type}.}
We can write
\begin{align*}
\eqref{1st-type} = \int_{\tau_i}^{u_{j(i)} + \tau_i}\cdots \int_{\tau_d}^{u_{j(d)}+ \tau_d} \prod_{l=i}^d \frac{\partial}{\partial \tau_l}\bigg\vert_{\tau_l
=\zeta_k} \det(h'(\tau_1),\dots,h'(\tau_d))\, d\zeta_i\cdots d\zeta_d.
\end{align*}

By \eqref{jaco}, we get
\begin{align*}
\prod_{l=i}^d &\frac{\partial}{\partial \tau_l}\bigg\vert_{\tau_l =\zeta_l}  \det(h'(\tau_1),\dots,h'(\tau_d))\\
&= N \,(d-1)! \, Q_{N-d}(\tau_1,\dots,\tau_{i-1},\zeta_i,\dots,\zeta_d) \prod_{l=i}^d \frac{\partial}{\partial \tau_l}\bigg\vert_{\tau_l =\zeta_l}
V(\tau_1,\dots,\tau_d) + \\
&\qquad+ N\,(d-1)!\,V(\tau_1,\dots,\tau_{i-1},\zeta_i,\dots,\zeta_d) \prod_{l=i}^d \frac{\partial}{\partial \tau_l}\bigg\vert_{\tau_l =\zeta_l}
Q_{N-d}(\tau_1,\dots,\tau_d) \\
&=: I + II.
\end{align*}

First, we consider the following product of derivatives
of the Vandermonde determinant:
\[
\prod_{l=i}^d \frac{\partial}{\partial \tau_l} V(\tau_1,\dots,\tau_d),
\]
which is given by a finite sum of terms
$
 V(\tau_1,\dots,\tau_d) / \prod_{l=i}^d (\tau_l - \tau_{m(l)})
$.
Here, $m(l)$ is an index strictly less than $l$.
Then $|I|$ is bounded by terms such as
\begin{align*}
\frac{ N\, (d-1)! \, |Q_{N-d}(\tau_1,\dots,\tau_{i-1},\zeta_i,\dots,\zeta_d) V(\tau_1,\dots,\tau_{i-1},\zeta_i \dots,\zeta_d)| }{\prod_{l=i}^d |\zeta_l -
\zeta_{m(l)}|}.
\end{align*}
Note that $\zeta_{m(l)} = \tau_{m(l)}$ if $m(l) \le i-1$.
To handle the denominator, we need to make some observations.
Since $\zeta_l$ is on the line segment between $\tau_l$ and $u_{j(l)} + \tau_l$, we see that
$ |\tau_l -\zeta_l | \le |u_{j(l)}|$. If $|\tau_l| \ll |\tau_{m(l)}|$ or $|\tau_{m(l)}| \ll |\tau_l|$, it follows from \eqref{ujii} that
\[
|\tau_l -\zeta_l | \le |u_{j(l)}| \le \varepsilon \delta |\tau_l| \lesssim \varepsilon \delta |\tau_l - \tau_{m(l)}|.
\]
If $|\tau_l| \sim |\tau_{m(l)}|$, then \eqref{uji} (with \eqref{sameband}) and $(ii)$ in Lemma \ref{band} gives that
\[
|\tau_l -\zeta_l | \le |u_{j(l)}| \le \varepsilon \delta \alpha_1^{\frac12} |\tau_l|^{-\frac{2K}{d(d+1)}}
\sim \varepsilon \delta \alpha_1^{\frac12} |\tau_l  \tau_{m(l)}|^{-\frac{K}{d(d+1)}} < \varepsilon |\tau_l - \tau_{m(l)}|.
\]
Hence we obtain that $|\tau_l -\zeta_l | < \varepsilon |\tau_l - \tau_{m(l)}|$.
In the same way, one can see that $|\tau_{m(l)} -\zeta_{m(l)} | < \varepsilon |\tau_l - \tau_{m(l)}|$ for $m(l) \ge i$.
By this and the triangle inequality, it follows that
\[
|\zeta_l - \zeta_{m(l)} | \gtrsim |\tau_l - \tau_{m(l)}|
\]
for $1 \le m(l) < l \le d$.

Once again we consider two cases.
If $|\tau_l| \ll |\tau_{m(l)}|$ or $|\tau_{m(l)}| \ll |\tau_l|$,
\[
|\zeta_l - \zeta_{m(l)} | \gtrsim |\tau_l - \tau_{m(l)}| \gtrsim |\tau_l| \gtrsim \alpha^{\frac12}|\tau_l|^{-\frac{2K}{d(d+1)}},
\]
where the last inequality is from the fact that $|\tau_l|\gtrsim \alpha^\nu = \alpha^{\frac{d(d+1)}{4K + 2d(d+1)}}$.
If $|\tau_l| \sim |\tau_{m(l)}|$, it follows from $(ii)$ in Lemma \ref{band} that
\[
|\zeta_l - \zeta_{m(l)} | \gtrsim |\tau_l - \tau_{m(l)}| \gtrsim \delta \alpha^{\frac12} |\tau_l \tau_{m(l)}|^{-\frac{K}{d(d+1)}} \sim \delta \alpha^{\frac12}|\tau_l|^{-\frac{2K}{d(d+1)}}.
\]
%

%
Thus we obtain that
\[
\prod_{l=i}^d |\zeta_l -\tau_{m(l)}| \gtrsim \prod_{l=i}^d \delta \alpha_1^{\frac12} |\tau_l|^{-\frac{2K}{d(d+1)}}.
\]

Also, the numerator can be bounded by $|J_d(\tau)|$ by following the same argument as the second type estimate.

As a result, we obtain that
\begin{align*}
\Big | \int_{\tau_i}^{u_{j(i)} + \tau_i}\cdots\int_{\tau_d}^{u_{j(d)}+\tau_d}\, I \, d\zeta_i\cdots d\zeta_d \Big|
&\lesssim |J_d(\tau)|\, \frac{\prod_{l=i}^d |u_{j(l)}|}{\prod_{l=i}^d \delta \alpha_1^{\frac12} |\tau_l|^{-\frac{2K}{d(d+1)}} } \\
&\lesssim |J_d(\tau)|\, \frac{\prod_{l=i}^d \varepsilon \delta \alpha_1^{\frac12} |\tau_l| ^{-\frac{2K}{d(d+1)}} }{\prod_{l=i}^d \delta \alpha_1^{\frac12}
|\tau_l|^{-\frac{2K}{d(d+1)}}} \\
&= \varepsilon^{d-i+1} |J_d(\tau)|.
\end{align*}
The last inequality is valid by \eqref{uji} and \eqref{sim}.

Now we consider an estimate for $II$.
Using the definition of the monic polynomial, we see that
\begin{align*}
\bigg| \prod_{l=i}^d \frac{\partial}{\partial \tau_l} \,Q_{N-d}(\tau_1,\dots,\tau_d) \bigg|
& = \bigg| \sum_{\substack{a_1+\cdots+a_d = N-d;\\a_i,\dots,a_d \ge 1}} \left(\prod_{l=i}^d a_l \right)
\tau_1^{a_1}\dots\tau_{i-1}^{a_{i-1}}\tau_i^{a_i-1}\dots\tau_d^{a_d-1} \bigg| \\
& \lesssim \frac{ \max_{1\le i \le d} |\tau_i|^{N-d} }{\prod_{l=i}^d |\tau_l|}.
\end{align*}
Also, the fact that $|V(\tau_i,\dots,\tau_{i-1},\zeta_i,\dots,\zeta_d)| \lesssim |V(\tau_i,\dots,\tau_d)|$ is already obtained.

It follows that
\[
|II| \lesssim \frac{ N\, (d-1)! \, |V(\tau_1,\dots,\tau_d)| \max_{1\le i \le d} |\tau_i|^{N-d} }{ \prod_{l=i}^d |\tau_l|}
\lesssim \frac{|J_d(\tau)|}{\prod_{l=i}^d |\tau_l|} ,
\]
and then
\begin{align*}
\Big | \int_{\tau_i}^{u_{j(i)} + \tau_i}\cdots\int_{\tau_d}^{u_{j(d)}+\tau_d}\, II \, d\zeta_i\cdots d\zeta_d \Big|
\lesssim |J_d(\tau)|  \prod_{l=i}^d \frac{ |u_{j(l)}|}{|\tau_l|} \lesssim (\varepsilon\delta)^{d-i+1} |J_d(\tau)|
\end{align*}
by \eqref{ujii}. Hence the error terms of both types can be bounded by $O(\varepsilon)\times |J_d(\tau)|$.
This completes the proof.
\qed


\begin{section}{Lemmas for optimal Lorentz space inequalities}
In this section, we prove Lemma \ref{trilinearF} which is crucial to show the (nearly) optimal Lorentz boundedness of $T$. (See Lemma A.2.)

\begin{lemma}\label{trilinearF}
Let $F_1, F_2, E \subset \mathbb R^{2d}$ be measurable sets with finite measure. Suppose that
\begin{align*}
\TTchi {F_1} (y) \ge \beta_1 \textrm{   and   } \TTchi {F_2} (y) \ge \beta_2
\end{align*}
for all $y \in E$ and $\beta_1 \le \beta_2$.
Suppose that $\alpha_i$ for $i=1,2$ such that $\langle \TTchi{F_i},\chi_E \rangle /|F_i| \ge \alpha_i $ and $\alpha_2 \le \alpha_1$.
Then there exists a constant $C >0$, depending on $N$ and $d$, such that
\begin{align*}
| F_2| \ge C \alpha_1^{r_1} \alpha_2^{r_2} \beta_1^{s_1} \beta_2^{s_2},
\end{align*}
where $r_1 +r_2 = \frac{d(d-1)}{2}$, $s_1 +s_2 = d$, and $  \frac{s_2}{q_d'} - \frac{r_2}{q_d} -1 >0 $.
\end{lemma}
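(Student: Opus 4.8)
The plan is to run, for the adjoint $T^{\ast}$, the same chain-of-sets construction and band-structure argument that Section 3 uses for $T$, now carrying \emph{two} levels on the $F$-side, namely the pointwise lower bounds $T^{\ast}\chi_{F_i}\ge\beta_i$ on $E$ with $\beta_1\le\beta_2$, in addition to the two levels $\alpha_2\le\alpha_1$ coming from the averages $\langle T^{\ast}\chi_{F_i},\chi_E\rangle/|F_i|\ge\alpha_i$ (equivalently, from the averages of $T\chi_E$ over $F_1,F_2$). Since $T^{\ast}f(x)=\int_\Delta f(x+h(z))\,d\sigma(z)$ is translation along $-h$, Lemma \ref{jaco_mono} and all the machinery of Section 3 apply verbatim with $h$ replaced by $-h$.

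First I would establish the analogue of Lemma \ref{setup}. After the preliminary truncation of $T^{\ast}$ that removes a small ball $B_\gamma$ (so that $|z_j|\gtrsim\gamma^{\nu}$ for the $z_j$ used below), I fix a point $x_0$ in a substantial subset of $F_1$ on which $T\chi_E(x_0)\gtrsim\alpha_1$ and build an alternating chain of sets $P_1,\dots,P_{2d}\subset\Delta$, along with refined subsets of $E$ and of $F_1,F_2$, in which the odd steps land in refinements of $E$ and satisfy $\sigma(P_{2k-1})\gtrsim\alpha_1$, the even steps land in refinements of $F_1$ and satisfy $\sigma(P_{2k})\gtrsim\beta_1$, while the last one or two steps are instead routed through $F_2$ and contribute the levels $\alpha_2$ and $\beta_2$ (the last step, which must land in $F_2$ so that the change of variables below targets $F_2$, always contributes a factor $\beta_2$; a preceding step routed through $F_2$ is what introduces $\alpha_2$). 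As in Lemma \ref{setup}, the separation properties analogous to $(iv)$--$(vi)$ there are obtained by discarding from each $P_j$ a controlled amount of measure while preserving its lower bound.

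Next, exactly as in the proof of Lemma \ref{d-trilinear}, I would split into two regimes according to the relative sizes of $\alpha_1$ and $\beta_1$. In the non-degenerate regime one argues directly, in the spirit of Lemma \ref{b>a}: with $H(z)=x_0+H_{2d}(z_1,\dots,z_{2d})$ and $\Phi$ the product of the last $d$ of the $P_j$'s, one has $|F_2|\gtrsim\int_\Phi|J_{\mathbb R}H|=\int_\Phi|J_{\mathbb C}H|^{2}$; invoking Lemma \ref{jaco_mono} and the separation estimates from the previous step and distributing $\max_i|z_i|^{2K}$ as $\prod_i|z_i|^{2K_i}$ with $\sum_iK_i=K$ to absorb the negative powers of $|z_i|$, the remaining integral becomes $\sigma(\Phi)$. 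In the degenerate regime one first constructs the band structure as in Lemma \ref{band}, then passes to a slice as in Lemma \ref{slice-jaco} to bound $\bigl|\det(\partial H(z(\tau,s))/\partial\tau)\bigr|$ from below by $|J_d(\tau)|$ times the appropriate powers of $\alpha_1,\alpha_2,\beta_1,\beta_2$ up to an $O(\varepsilon)$ error, integrates over the slice, and reverses the change of variables; B\'ezout's theorem supplies the needed finite generic multiplicity. Either way one arrives at a bound of the form $|F_2|\gtrsim\alpha_1^{r_1}\alpha_2^{r_2}\beta_1^{s_1}\beta_2^{s_2}$ in which the constraints $r_1+r_2=d(d-1)/2$ and $s_1+s_2=d$ are forced by homogeneity (the total powers are exactly those of the restricted weak type inequality for $T^{\ast}$, tracked as in the remark following Lemma \ref{d-trilinear}), while the individual exponents are determined by the combinatorics of the chain and of the Jacobian separation estimates.

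The step I expect to be the main obstacle is the last one: showing that the chain can be organized so that the exponents it produces lie in the admissible region $\frac{s_2}{q_d'}-\frac{r_2}{q_d}-1>0$. This is a strict inequality --- it is what upgrades the restricted weak type to the \emph{nearly} optimal Lorentz estimate --- and it constrains how many of the terminal steps may be routed through $F_2$ on each side, since routing an $F_2$-step on the $\beta$-side tends to force an $F_2$-step on the $\alpha$-side (one can only re-enter $E$ from whatever $F_i$ one just left). Checking that some valid routing satisfies the inequality, and keeping track of the resulting constant $C=C(d,N)$ through the two regimes and the slicing, is where the bookkeeping has to be done carefully.
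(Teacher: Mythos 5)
Your outline correctly mirrors the overall shape of the argument (adjoint chain of sets, truncation, case split on the relative size of $\beta_1$ and $\alpha_1$, band structure plus slicing plus B\'ezout in the degenerate regime), but it stops short at exactly the point where the real new content of this lemma lives, and the mechanism you gesture at is not the right one.

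Two specific problems. First, your account of how $\alpha_2$ enters is off. In the paper's analogue of Lemma \ref{setup} for this lemma (Lemma \ref{setup:F}) the chain has only $2d-1$ links: odd links carry $\beta_1$, even links carry $\alpha_1$, and only the final link carries $\beta_2$. There is no chain link carrying $\alpha_2$; the number $\alpha_2$ never appears as a $\sigma(P_j)$ lower bound. So "a preceding step routed through $F_2$ is what introduces $\alpha_2$" does not match what happens, and trying to add such a link would break the alternation that makes the Jacobian argument work.

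Second, and more importantly, a single band structure at the $(\alpha_1,\beta_1)$ scale is provably insufficient here, and the paper says so explicitly: after constructing the analogue of Lemma \ref{band} one only gets $\mathcal N + \lceil k/2\rceil \le d$, which gives the restricted weak type exponents with equality rather than the strict inequality $\frac{s_2}{q_d'}-\frac{r_2}{q_d}-1>0$. The missing idea is the \emph{two-stage band structure} borrowed from Stovall: one first builds bands at scale $\alpha_1^{1/2}$ (quasi-bound threshold $\beta_1^{1/2}$), and then refines the band $\mathscr B(2d-1)$ containing the last index into sub-bands at the finer scale $\gamma_2^{1/2}$ with $\gamma_2=\max\{\alpha_2,\beta_2\}$. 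This is where $\alpha_2$ enters, through $\gamma_2$. One then has to track four counts $F_1,Q_1,F_2,Q_2$ (free/quasi-free inside and outside $\mathscr B(2d-1)$) and split into the three cases "$2d-1$ free", "$2d-1$ quasi-free", "$2d-1$ bound", each time writing out $r_1,r_2,s_1,s_2$ in terms of $M,Q_1,Q_2,\lceil k/2\rceil$ and verifying $\frac{s_2}{q_d'}-\frac{r_2}{q_d}-1>0$ using bounds such as $Q_2\le (M-1)/2$ and $Q_1+\lceil k/2\rceil \le d-M+1$, together with a concavity argument in $M$. You correctly flag the strict inequality as the obstacle, but you do not supply the tool that overcomes it, and your implicit plan (route one extra link through $F_2$ and hope the exponents close) would not give the strict inequality. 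So there is a genuine gap here.
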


\begin{proof}[Proof of Lemma \ref{trilinearF}]
Similarly to Lemma \ref{setup}, we need the following to prove Lemma \ref{trilinearF}.
\begin{lemma}\label{setup:F}
Let $\gamma_1=\max\{\alpha_{1},\beta_1 \}$, $\nu = \frac{d(d+1)}{4K+2d(d+1)}$. There exist a point $y_0$ in $E$, a constant $C>0$, and a sequence of sets $P_1,\dots,P_{2d-1}$
in $\Delta$ such that
\begin{enumerate}[(i)]
\item $\sigma(P_j) \ge C \beta_1  $ for odd $j < 2d-1$,

\item $\sigma(P_j) \ge C \alpha_1 $ for even $j < 2d-1$,

\item $\sigma(P_{2d-1}) \ge C \beta_2 $,
\end{enumerate}
and $|z_j| \ge ( 4\pi\nu )^{-\nu} \gamma_1^\nu $ for $ z_j \in P_j$ for $1 \le j \le 2d-2$, $|z_{2d-1}| \ge ( 4\pi\nu )^{-\nu} \beta_2^\nu$.

Also there exists  a positive small constant $c$ such that
\begin{enumerate}
\item[(iv)] if $z_j \in P_j$ for odd $j<2d-1$, then $|z_j - z_i| \ge c \beta_1^{\frac12}\abs{z_{i}}^{\frac{-2K}{d(d+1)}}$, where $i < j $,
\item[(v)] if $z_j \in P_j$ for even $j<2d-1$, then $\abs{z_{j}-z_{i}}\ge c \alpha_{1}^{\frac12} \abs{z_{i}}^{\frac{-2K}{d(d+1)}}$, where $i<j$,
\item[(vi)] for $z_{2d-1} \in P_{2d-1}$ and $j < 2d-1$,
$\abs{z_{2d-1} -z_j}\ge c \beta_2^{\frac12} \abs{z_{2d-1}}^{\frac{-2K}{d(d+1)}}$ if $\abs{z_j}< \frac12 ( 4\pi\nu )^{-\nu} \beta_{2}^{\nu}$, and
$\abs{z_{2d-1}-z_{j}} \ge c \beta_{2}^{\frac12}\abs{z_{j}}^{\frac{-2K}{d(d+1)}}$ if $\abs{z_j} \ge \frac12( 4\pi\nu )^{-\nu} \beta_{2}^{\nu}$.
\end{enumerate}
\end{lemma}

We begin with the easy case $\beta_1 \gtrsim \alpha_1$.
Let $\Phi_{2d-1}=\{(z_{1}, z_{2}, ..., z_{2d-1}) : z_{i} \in P_{i}\, \text{ where }\, 1 \le i \le 2d-1 \}$.
We define $z_0=(z_{1}, z_{2}, ... ,z_{d-1}) $ and $\Phi := \{z \in \mathbb{C}^{d} : (z_{0},z)\in \Phi_{2d-1}\}$.
Note that $\sigma(\Phi) \sim \alpha_1^{\lfloor d/ 2 \rfloor} \beta_1^{ \lceil d/  2 \rceil } (\beta_2/\beta_1)$.
For $d \ge 2$, let us define
\begin{align*}
a'_{d}=
\begin{cases}
1 + 3 + \cdots + d &\text{if $d$ is odd}, \\
2+ 4 + \cdots + d &\text{if $d$ is even}.
\end{cases}
\end{align*}

Then the following lemma is obtained by the same argument as in the proof of Lemma \ref{b>a}.
\begin{lemma}
Let $d \ge 2$. Assume the hypotheses in Lemma \ref{trilinearF}. Then
\begin{align*}
\abs{F_{2}} \gtrsim \alpha_{1}^{\frac{d(d+1)}2}\left(\frac{\beta_1 }{\alpha_{1}}\right)^{a'_{d}} \left( \frac{\beta_{2}}{\beta_{1}} \right)^{d}.
\end{align*}
\end{lemma}

Since $\beta_1 \gtrsim \alpha_1$, this implies Lemma \ref{trilinearF}. In fact, $(\beta_1/\alpha_1)^{a'_d} \geq (\beta_1 / \alpha_1)^{d}$, and
$(\beta_2/\beta_1)^d \geq (\beta_2/\beta_1)^2$ by the assumption $\beta_2 \ge \beta_1$ of Lemma \ref{trilinearF}.

Now suppose that $\beta_1 \ll \alpha_1$.
In this case we obtain the following lemma similar to Lemma \ref{band}.

\begin{lemma}\label{band_F}
Let $\varepsilon >0$. Then there exist parameters $\delta, \delta'$ satisfying $c_{d,\varepsilon} < \delta' <\varepsilon\delta <\varepsilon c$, a positive constant
$c_0$, an integer $d\le k < 2d$, an element $z_0$, a set $\omega \subset \mathbb C^{k}$ with
\begin{equation}\label{omega}
\sigma(\omega) \sim \alpha_1^{\lfloor \frac k2 \rfloor} \beta_1^{\lceil \frac k2 \rceil } \left( \frac{\beta_2}{\beta_1} \right),
\end{equation}
and a band structure on $\{2d-k,\dots, 2d-1\}$, such that the following properties hold:
\begin{enumerate}[(i)]
\item There are exactly $d$ free or quasi-free indices. In particular, each even index is free.
\item $|z_i - z_j | >  \delta \alpha_1^{\frac12}  |z_i z_j|^{-K/d(d+1)}$, unless $i$ and $j$ lie in the same band.
\item $  c_0  \beta_1^{\frac12} |z_i z_j|^{-K/d(d+1)} < |z_i - z_j| \lesssim \delta \alpha_1^{\frac12}  |z_i z_j|^{-K/d(d+1)}$ whenever $i$ is
quasi-bound to $j$.
\item $\delta' \alpha_1^{\frac12}  |z_i z_j|^{-K/d(d+1)} > |z_i - z_j| $ whenever $i$ is bound to $j$.
\end{enumerate}
\end{lemma}
Note that \eqref{omega} may be deduced from $(i)-(iii)$ in Lemma \ref{setup:F}.

Now suppose that $\beta_2 \gtrsim \alpha_1$.
It follows that $2d-1$ must be a free index without quasi-bound and bound indices after carrying out Lemma \ref{band_F}.
Thus \eqref{slice-jaco-bound} will be modified as follows:
\begin{align*}
\Big| \det\left(\frac{\partial H(z(\tau,s)) }{\partial \tau}\right) \Big|^2
 \gtrsim \alpha_1^{\frac{d(d-1)}{2}} \left( \frac{\beta_1}{\alpha_1} \right)^{\mathcal N}\left( \frac{\beta_2}{\alpha_1} \right)^{d-1} \prod_{i=1}^{d} |\tau_i|^{\frac{4K}{d(d+1)}}
\end{align*}
where $\mathcal N$ is the number of quasi-free indices.
Similarly to the proof of Lemma \ref{d-trilinear}, it follows by \eqref{omega} that
\begin{align*}
|F_2|
&\gtrsim \alpha_1^{\frac{d(d-1)}{2}} \left( \frac{\beta_1}{\alpha_1} \right)^{\mathcal N} \left( \frac{\beta_2}{\alpha_1} \right)^{d-1} \alpha_1^{d-k} \alpha_1^{\lfloor \frac k2 \rfloor} \beta_1^{\lceil
\frac k2 \rceil } \left( \frac{\beta_2}{\beta_1} \right)\\
& = \alpha_1^{\frac{d(d+1)}{2}} \left( \frac{\beta_1}{\alpha_1} \right)^{\mathcal N  + \lceil \frac k2 \rceil } \left( \frac{\beta_2}{\alpha_1} \right)^{d-1} \left( \frac{\beta_2}{\beta_1} \right).
\end{align*}
Since $2d-k$, $2d-1$, and all even indices between $2d-k$ and $2d-1$ are free indices, the number of free indices is at least $\lfloor \frac k2 \rfloor + 2$.
Hence $\mathcal N + \lceil \frac k2 \rceil  \le d-1$.
Since we have $\beta_2 \gtrsim \alpha_1$, we conclude that $|F_2| \gtrsim \alpha_1^{\frac{d(d-1)}{2}} \beta_1^{d-2} \beta_2^2$.
This satisfies the relations in Lemma \ref{trilinearF}.

We assume that $\beta_2 \ll \alpha_1$. Then the index $2d-1$ may not be free.
The number of free indices is at least $\lfloor \frac k2 \rfloor +1$, which means $\mathcal N + \lceil \frac k2 \rceil \le d$.
One can see that this is not enough for the desired bound.
So we follow the argument using a two-stage band structure due to Stovall \cite{S1}.

Let $\mathscr B(2d-1)$ be the band containing $2d-1$ after carrying out Lemma \ref{band_F}.
Now, we decompose $\mathscr B(2d-1)$ into sub-bands as follows.
For $\varepsilon>0$ and $c_{d,\varepsilon}$ in Lemma \ref{band_F}, there exist $\rho$ and $\rho'$ such that $c_{d,\varepsilon}<\rho' <\varepsilon \rho < \delta'$, and a subset $\omega'$ of $\omega$ satisfying that $\sigma(\omega') \sim \sigma(\omega)$. Then the following properties hold:
\begin{enumerate}[(i)]
\item $\abs{z_{i}-z_{j}} > \rho \gamma_{2}^{1/2} | z_{i} z_{j} |^{-K/d(d+1)}$ unless $i$ and $j$ lie in the same band.
\item $c_{0}\beta_{1}^{1/2}| z_{i} z_{j} |^{-K/d(d+1)} < \abs{z_{i} - z_{j}} \le \rho\gamma_{2}^{1/2} | z_{i} z_{j} |^{-K/d(d+1)}$ whenever $i$ is quasi-bound to $j$.
\item $\rho' \gamma_{2}^{1/2} | z_{i} z_{j} |^{-K/d(d+1)} > \abs{z_{i} - z_{j}}$ whenever $i$ is bound to $j$,
\end{enumerate}
for $i, j \in \mathscr B(2d-1)$, $\gamma_{2}=\max \{\alpha_{2}, \beta_{2}\}$ and some constant $c_{0}>0$.

After this step, the number of free and quasi-free indices in $\{2d-k,\dots, 2d-1\}$ may increase.
Then we repeat {\sl Step 3} in the proof of Lemma \ref{band} (the step of eliminating some indices) until we get exactly $d$ free and quasi-free indices in $\{ 2d-k',\dots, 2d-1\}$ for some integer $k'$.
By abuse of notation we will write $k$ instead of $k'$.

Let $F_1$ and $Q_1$ be the number of free and quasi-free indices which are contained in $\{2d-k, \dots, 2d-1\} \backslash \mathscr B(2d-1)$.
Also, let $F_2$ and $Q_2$ be the number of free and quasi-free indices which are contained in $\mathscr B(2d-1)$.
Note that $F_1 +Q_1 +F_2+Q_2 = d$.
We set $M=F_{2}+Q_2$, and the number of elements of $\mathscr B(2d-1)$ is denoted by $N$. Then $N-M$ denotes the number of bound indices in $\mathscr B(2d-1)$.

The case when $\mathscr B(2d-1) ={2d-1}$ is the  same as the case  $\beta_2 \gtrsim \alpha_1$ above.
Hence we consider the following three cases:
\begin{enumerate}
\item $2d-1$ is free and there is at least one free index other than $2d-1$ in $\mathscr B(2d-1)$.
\item $2d-1$ is quasi-free.
\item $2d-1$ is bound to some $j$ in $\mathscr B(2d-1)$.
\end{enumerate}

{\sl Case (1)}. Since we  are in  the case that $2d-1$ is free, we have $F_1 + F_2 \ge \lfloor \frac k 2 \rfloor +2$.  In this case, we get the lower bound of Jacobian \eqref{slice-jaco-bound} as follows:
\[
\Big|\det\left(\frac{\partial H(z(\tau,s)) }{\partial \tau}\right) \Big|^2 > C \alpha_{1}^{\frac{d(d-1)}{2}} \left(\frac{\gamma_{2}}{\alpha_{1}}\right)^{\frac{M(M-1)}{2}}  \left(\frac{\beta_{1}}{\alpha_{1}}\right)^{Q_{1}}  \left(\frac{\beta_{1}}{\gamma_{2}}\right)^{Q_{2}}
\prod_{i=1}^{d} |\tau_i|^{\frac{4K}{d(d+1)}}.
\]

In addition, note that $\sigma(\{ s : z(\tau,s) \in \omega  \})$
is bounded above by $\alpha_{1}^{k-d} \left( \frac{\gamma_{2}}{\alpha_{1}}\right)^{N-M}$ in this case.
Thus, by combining these, we obtain that
\begin{align*}
\abs{F_{2}} &\gtrsim \alpha_{1}^{\frac{d(d-1)}{2}} \left(\frac{\gamma_{2}}{\alpha_{1}}\right)^{\frac{M(M-1)}{2}} \left(\frac{\beta_{1}}{\alpha_{1}}\right)^{Q_{1}}  \left(\frac{\beta_{1}}{\gamma_{2}}\right)^{Q_{1}} \times \alpha_{1}^{d-k} \left( \frac{\gamma_{2}}{\alpha_{1}}\right)^{M-N} \times \alpha_1^{\lfloor \frac k2 \rfloor} \beta_1^{\lceil \frac k2 \rceil } \left( \frac{\beta_2}{\beta_1} \right) \\
&= \alpha_{1}^{\frac{d(d+1)}{2}}\left(\frac{\beta_{1}}{\alpha_{1}}\right)^{Q_{1}+ \lceil \frac{k}{2}\rceil}  \left(\frac{\gamma_{2}}{\alpha_{1}}\right)^{\frac{M(M-1)}{2}} \left(\frac{\beta_{1}}{\gamma_{2}}\right)^{Q_{2}} \left(\frac{\gamma_{2}}{\alpha_{1}}\right)^{M-N} \left(\frac{\beta_{2}}{\beta_{1}}\right) \\
&\ge \alpha_1^{\frac{d(d+1)}{2}}  \left( \frac{\beta_1}{\alpha_1} \right)^{Q_1 + Q_2 + \lceil \frac k 2 \rceil - 1  } \left(   \frac{\gamma_2}{\alpha_1} \right)^{\frac{M(M-1)}{2} - Q_2} \left( \frac{\beta_2}{\alpha_1}\right) .
\end{align*}
The last inequality holds, because $N-M \ge 0$ and $\gamma_2 \ll \alpha_1$.
Since $2d-1$ is free, we have that $Q_{1}+Q_{2} + \lceil \frac{k}{2} \rceil -1 \le Q_1 +Q_2 +F_1+F_2 -2 =d-2$.

If $\frac{M(M-1)}2 \le d - Q_1 -\lceil \frac k2 \rceil$, then we may write $ ( \gamma_2 /\alpha_1 )^{\frac{M(M-1)}{2} - Q_2} \ge (\beta_2 / \alpha_1)^{d - Q_1 -Q_2 -\lceil \frac k 2 \rceil}$.
Then we have that $|F_2| \gtrsim \alpha_1^{r_1} \alpha_2^{r_2} \beta_1^{s_1} \beta_2^{s_2}$ with
$r_1 = d(d-1)/2$, $r_2 =0$ and
\begin{align*}
s_1   = Q_1 + Q_2 + \Big\lceil \frac k 2 \Big\rceil -1, \quad 
s_2   = d- Q_1 - Q_2 -\Big\lceil \frac k 2 \Big\rceil +1.
\end{align*}
It is easy to verify that the relations of $r_1,r_2,s_1,s_2$ in Lemma \ref{trilinearF} are valid by the fact that $s_2 \ge 2$ implies $s_2/ q_d' >1$.

If $\frac{M(M-1)}2 > d - Q_1 -\lceil \frac k2 \rceil$, we may write $\gamma_2^{\frac{M(M-1)}2 -Q_2} \ge \beta_2^{d - Q_1 -Q_2 -\lceil \frac k 2 \rceil} \alpha_2^{\frac{M(M-1)}2  + Q_1 + \lceil \frac k 2 \rceil - d}$.
Then $|F_2| \gtrsim \alpha_1^{r_1} \alpha_2^{r_2} \beta_1^{s_1} \beta_2^{s_2}$ holds with
\begin{align*}
r_1 & = \frac{d(d+1)}{2} -\frac{M(M-1)}{2} - Q_1 - \Big\lceil \frac k 2 \Big\rceil,  \\
r_2 & = \frac{M(M-1)}{2} + Q_1 + \Big\lceil \frac k 2 \Big\rceil - d,  \\
s_1 & = Q_1 + Q_2 + \Big\lceil \frac k 2 \Big\rceil -1, \\
s_2 & = d- Q_1 - Q_2 -\Big\lceil \frac k 2 \Big\rceil +1.
\end{align*}
It is easy to check that $r_1+r_2 = d(d-1)/2$ and $s_1 + s_2 = d$.
Now let us verify $s_2 /q_d' - r_2 /q_d -1 > 0$.
Since $2d-1$ is free and is the largest index, we see that $2d-1$ has no quasi-bound index, hence $Q_2 \le (M-1)/2$ . Also it is valid that $Q_1 + \lceil k/2 \rceil \le d - M +1$, since $F_1$ is at least $\lceil k/2 \rceil - 1$ and $d = F_1 + Q_1 + F_2 + Q_2 = F_1 + Q_1 +M$.
Hence we see that $ s_2 /q_d' - r_2 /q_d -1 \ge (M-1)/2 - 1/q_d - (M-1)^2/2q_d$, which is positive for all $2 \le M \le d/2 +1$. Here, we may assume that $M \ge 2$, because $\mathscr B (2d-1)$ contains at least two elements.
Then the least element and $2d-1$ are free.
Also, we get $M \le d/2 +1$ from the fact that $Q_1 + \lceil k/2 \rceil \le d - M +1$ implies $ M \le d - \lceil k/2 \rceil +1 \le d/2 +1$.  By the concavity, $(M-1)/2 - 1/q_d - (M-1)^2/2q_d >0 $ if it is positive when $M = 2$ and $M = d/2+1$.
For the case $M=2$, the condition $d \ge 4$ is necessary.
Thus we conclude that $s_2 /q_d' - r_2 /q_d -1 > 0$ holds for all possible $M$ whenever $d \ge 4$.

When $d= 3$, one can see that the only possible cases are $k=3$ and all indices $3,4,5$ are free.
(Note that $4,5$ should be free by the assumptions.)
Thus $Q_1 = 0$ and then $M(M-1)/2 = d -Q_1 -\lceil \frac k2  \rceil$ since $M=2$.
We will consider more general $\mathbb{C}^3$ curves in Section 5.

{\sl Case (2)}. When $2d-1$ is quasi-free, we have that $F_1 + F_2 \ge \lfloor \frac k 2 \rfloor +1$. In this case, the lower bound of Jacobian \eqref{slice-jaco-bound} can be modified as
\[
\Big| \det\left( \frac{\partial H(z(\tau,s)) }{\partial \tau}\right) \Big|^2 \ge C \alpha_1^{\frac{d(d-1)}{2}} \left( \frac{\gamma_2}{\alpha_1} \right)^{\frac{M(M-1)}{2}} \left( \frac{\beta_1}{\alpha_1} \right)^{Q_1} \left( \frac{\beta_1}{\gamma_2} \right)^{Q_2-1} \left(\frac{\beta_2}{\gamma_2}\right) \prod_{i=1}^{d} |\tau_i|^{\frac{4K}{d(d+1)}}.
\]
Then it follows that
\begin{align*}
|F_2| &\gtrsim \alpha_1^{\frac{d(d-1)}{2}} \left( \frac{\gamma_2}{\alpha_1} \right)^{\frac{M(M-1)}{2}} \left( \frac{\beta_1}{\alpha_1} \right)^{Q_1} \left( \frac{\beta_1}{\gamma_2} \right)^{Q_2-1} \left(\frac{\beta_2}{\gamma_2}\right)  \alpha^{d-k} \left( \frac{\gamma_2}{\alpha_1} \right)^{M-N} \alpha_1^{\lfloor \frac k2 \rfloor} \beta_1^{\lceil \frac k2 \rceil} \left( \frac{\beta_2}{\beta_1} \right) \\
& = \alpha_1^{\frac{d(d+1)}{2}}  \left( \frac{\beta_1}{\alpha_1} \right)^{Q_1 + Q_2 + \lceil \frac k 2 \rceil - 2  } \left(   \frac{\gamma_2}{\alpha_1} \right)^{\frac{M(M-1)}{2} - (N-M) - Q_2} \left( \frac{\beta_2}{\alpha_1}\right)^2  \\
&\ge \alpha_1^{\frac{d(d+1)}{2}}  \left( \frac{\beta_1}{\alpha_1} \right)^{Q_1 + Q_2 + \lceil \frac k 2 \rceil - 2  } \left(   \frac{\gamma_2}{\alpha_1} \right)^{\frac{M(M-1)}{2} - Q_2} \left( \frac{\beta_2}{\alpha_1}\right)^2 .
\end{align*}
The last inequality holds, because $N-M \ge 0$ and $\gamma_2 \ll \alpha_1$.

If $\frac{M(M-1)}2 \le d - Q_1 -\lceil \frac k2 \rceil$, then we may write $ ( \gamma_2 /\alpha_1 )^{\frac{M(M-1)}{2} - Q_2} \ge (\beta_2 / \alpha_1)^{d - Q_1 -Q_2 -\lceil \frac k 2 \rceil}$.
Also we see that $( {\beta_1}/{\alpha_1} )^{Q_1 + Q_2 + \lceil \frac k 2 \rceil - 2} \ge  ( {\beta_1}/{\alpha_1} )^{d-2}$, since $Q_1 + Q_2 +\lceil \frac k 2 \rceil \le Q_1+Q_2+F_1+F_2\le d$.
Then we have that $r_1 = d(d-1)/2$, $r_2 =0$, $s_1 = d-2$, and $s_2 = 2$. It is easy to check the relations of $r_1,r_2,s_1,s_2$ in Lemma \ref{trilinearF}.

If $\frac{M(M-1)}2 > d - Q_1 -\lceil \frac k2 \rceil$, we may write $r_2^{\frac{M(M-1)}2 -Q_2} \ge \beta_2^{d - Q_1 -Q_2 -\lceil \frac k 2 \rceil} \alpha_2^{\frac{M(M-1)}2  + Q_1 + \lceil \frac k 2 \rceil - d}$. Then we have that
\begin{align*}
r_1 & = \frac{d(d+1)}{2} -\frac{M(M-1)}{2} - Q_1 - \Big\lceil \frac k 2 \Big\rceil,  \\
r_2 & = \frac{M(M-1)}{2} + Q_1 + \Big\lceil \frac k 2 \Big\rceil - d , \\
s_1 & = Q_1 + Q_2 + \Big\lceil \frac k 2 \Big\rceil -2 ,\\
s_2 & = d- Q_1 - Q_2 -\Big\lceil \frac k 2 \Big\rceil +2.
\end{align*}
It is easy to check that $r_1+r_2 = d(d-1)/2$ and $s_1 + s_2 = d$.
Now let us verify $s_2 /q_d' - r_2 /q_d -1 > 0$.
Since $Q_2 \le M/2$ and $Q_1 + \lceil k/2 \rceil \le d - M +1$, we see that
$ s_2 /q_d' - r_2 /q_d -1 \ge M/2 - 2/q_d - M(M-2)/2q_d$, which is positive for all $2 \le M \le d/2 +1$. Since $2d-1$ is quasi-free, $M$ is at least 2. Also, $Q_1 + \lceil k/2 \rceil \le d - M +1$ implies $ M \le d - \lceil k/2 \rceil +1 \le d/2 +1$. Thus we conclude that $s_2 /q_d' - r_2 /q_d -1 > 0$ holds for all possible $M$ whenever $d \ge 4$.

When $d=3$, we can check that $\frac{M(M-1)}{2} = d - Q_1 -\lceil \frac k2 \rceil$ holds.
In fact, the only possible cases are that $5$ is quasi-bound to $3$ or $4$ with $k=3$. Then $Q_1=0$ and $M = F_2 + Q_2 =2$.

{\sl Case (3)}. Finally we consider the case that $2d-1$ is bound to some $j \in \mathscr B$. In this case, we have that
\[
\Big| \det\left( \frac{\partial H(z(\tau,s)) }{\partial \tau}\right) \Big|^2 \ge C \alpha_1^{\frac{d(d-1)}{2}} \left( \frac{\gamma_2}{\alpha_1} \right)^{\frac{M(M-1)}{2}} \left( \frac{\beta_1}{\alpha_1} \right)^{Q_1} \left( \frac{\beta_1}{\gamma_2} \right)^{Q_2} \prod_{i=1}^{d} |\tau_i|^{\frac{4K}{d(d+1)}}.
\]
Thus one can see that
\begin{align*}
|F_2| &\gtrsim \alpha_1^{\frac{d(d-1)}{2}} \left( \frac{\gamma_2}{\alpha_1} \right)^{\frac{M(M-1)}{2}} \left( \frac{\beta_1}{\alpha_1} \right)^{Q_1} \left( \frac{\beta_1}{\gamma_2} \right)^{Q_2} \alpha^{d-k} \left( \frac{\gamma_2}{\alpha_1} \right)^{M-N} \alpha_1^{\lfloor \frac k2 \rfloor} \beta_1^{\lceil \frac k2 \rceil} \left( \frac{\beta_2}{\beta_1} \right) \\
& = \alpha_1^{\frac{d(d+1)}{2}}  \left( \frac{\beta_1}{\alpha_1} \right)^{Q_1 + Q_2 + \lceil \frac k 2 \rceil - 1  } \left(   \frac{\gamma_2}{\alpha_1} \right)^{\frac{M(M-1)}{2} - Q_2 - (N-M) } \left( \frac{\beta_2}{\alpha_1}\right).
\end{align*}

Since $2d-1$ is bound to some $j$, there are at least two indices which are bound to $j$.
Then we have $N-M \ge 2$, which implies $(\gamma_2 /\alpha_1)^{-(N-M)} \ge (\gamma_2 /\alpha_1)^{-2}$.
Thus we obtain that
\begin{align*}
|F_2| \gtrsim \alpha_1^{\frac{d(d+1)}{2}}  \left( \frac{\beta_1}{\alpha_1} \right)^{Q_1 + Q_2 + \lceil \frac k 2 \rceil - 1  } \left(   \frac{\gamma_2}{\alpha_1} \right)^{\frac{M(M-1)}{2} - Q_2 - 2 } \left( \frac{\beta_2}{\alpha_1}\right).
\end{align*}

Note that $d - Q_1 - Q_2 -\lceil \frac k2 \rceil \ge 0$ and we may assume $\gamma_2 = \alpha_2 > \beta_2$. (If $\alpha_2  \le \beta_2$, then $2d-1$ should be free.)
Hence we may write
\[
 \gamma_2^{\frac{M(M-1)}{2} - Q_2 - 2 } \ge   \beta_2^{d-Q_1 - Q_2 - \lceil \frac k2 \rceil } \alpha_2^{\frac{M(M-1)}{2} +Q_1 +\lceil \frac k2 \rceil - d- 2 } .
\]
It follows that $|F_2| \gtrsim \alpha_1^{r_1}\alpha_2^{r_2}\beta_1^{s_1}\beta_2^{s_2}$ holds with
\begin{align*}
r_1 & = \frac{d(d+1)}{2} - Q_1 - \Big\lceil \frac k 2 \Big\rceil -\frac{M(M-1)}{2} +2, \\
r_2 & = \frac{M(M-1)}{2} + Q_1 + \Big\lceil \frac k 2 \Big\rceil - d -2, \\
s_1 & = Q_1 + Q_2 + \Big\lceil \frac k 2 \Big\rceil -1, \\
s_2 & = d- Q_1 - Q_2 -\Big\lceil \frac k 2 \Big\rceil +1.
\end{align*}

It is easy to check that $r_1 + r_2 = d(d-1)/2$ and $s_1 + s_2 =d$.
Now let us verify $s_2 /q_d' - r_2 /q_d -1 > 0$ for all $d \ge 3$. Since $Q_2 \le (M-1)/2$ and $Q_1 + \lceil k/2 \rceil \le d - M +1$, it follows that
$ s_2 /q_d' - r_2 /q_d -1 \ge (M-1)/2 + 1/q_d - (M-1)^2/2q_d$, which is positive for all $1 \le M \le d/2 +1$.
Since the assumption that $2d-1$ is bound implies $M \ge 1$, and the condition $Q_1 + \lceil k/2 \rceil \le d - M +1$ implies $M \le d/2+1$ for all , we can conclude that $s_2 /q_d' - r_2 /q_d -1 > 0$ for all possible $M$ and for all $d \ge 3$.
This completes the proof.
\end{proof}

\section{The polynomial curves in $\mathbb C^3$}

In this section, we consider  polynomial curves of simple type in $\mathbb C^3$.
Let $h(z) = (z,z^2,\phi(z))$ with a complex polynomial $\phi(z)$ of degree at most $N$.
Then $d\sigma(z) = |\phi'''(z)|^{1/3} d\mu(z)$.
By Lemma \ref{jaco_poly}, it is enough to consider a restricted domain $B := D \cap B_\ell$ for some $1 \le \ell \le M$.
By \eqref{assum:poly}, we also see that $|\phi'''(z)| \sim |z|^k$ whenever $z \in B_\ell \subset S\cap \Delta\cap E_k$ with $E_k = G_k$ or $D_k$.

Thus we can set
\begin{align}\label{Tf}
T f(x) = \int_{B} f(x-h(z)) \, d\sigma(z) = \int_B f(x-h(z)) \,|z|^k d\mu(z).
\end{align}

This section is actually redundant, because \eqref{Tf} is the same as \eqref{operator}. However, we will show that the $d=3$ case can be treated directly without the band structure.
Also, the complex version of the band structure in Section 3 was motivated by the proof of Lemma \ref{trilinear_3d}.

Again, we will show that two refinements of the restricted weak type estimates hold:
\begin{align}\label{Eab}
|E| \gtrsim \alpha^4 \beta^2 \quad \textrm{ or }\quad |F| \gtrsim \alpha^3 \beta^3.
\end{align}

First, we prove the following, by which one can obtain the strong type $(2,3)$ estimate.
(See Lemma \ref{weak_ver} and the beginning of Appendix A.)
\begin{lemma}\label{trilinearE}
Let $E_1,E_2,G \subset \mathbb R^6$ be measurable sets with finite measure. Suppose that
\begin{align*}
\Tchi{E_1}(x) \ge \alpha_1 \textrm{   and   } \Tchi{E_2} (x) \ge \alpha_2
\end{align*}
for all $x \in G$ and $\alpha_1 \le \alpha_2$. Then
\begin{align*}
| E_2| \gtrsim \alpha_1 \, \alpha_2^3 \,\beta^2,
\end{align*}
where $\beta = \alpha_1 \frac{|G|}{|E_1|}$.
\end{lemma}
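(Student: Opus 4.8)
The plan is to specialize the proof of Lemma~\ref{b>a} to $d=3$ and to observe that, for $d=3$, it works for \emph{every} $\beta$, so the band structure of Section~3 is never needed. Since \eqref{Tf} coincides with \eqref{operator} for $d=3$ (as remarked above), Lemma~\ref{setup} applies with $d=3$, producing a point $y_0\in E_1$ and sets $P_1,\dots,P_6\subset\Delta$ with $\sigma(P_1),\sigma(P_3),\sigma(P_5)\gtrsim\beta$, $\sigma(P_2),\sigma(P_4)\gtrsim\alpha_1$, $\sigma(P_6)\gtrsim\alpha_2$, together with the separation estimates $(iv)$--$(vi)$ (here $\nu=\frac{d(d+1)}{4K+2d(d+1)}$ and $d\sigma(z)\sim|z|^{\frac{4K}{d(d+1)}}d\mu(z)$ in the notation of Section~3). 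Fix $z_0=(z_1,z_2,z_3)$ with $z_i\in P_i$ and set $\Phi=\{(z_4,z_5,z_6):z_4\in P_4,\ z_5\in P_5,\ z_6\in P_6\}$, so that $\sigma(\Phi)\gtrsim\alpha_1\beta\alpha_2$ by Fubini. The map $H(z_4,z_5,z_6)=y_0+H_6(z_0,z_4,z_5,z_6)$ carries $\Phi$ into $E_2$, is holomorphic, and has complex Jacobian determinant equal up to sign to $J_3(z_4,z_5,z_6)=\det(h'(z_4),h'(z_5),h'(z_6))$; by B\'ezout's theorem it has bounded generic multiplicity. Hence, also using Lemma~\ref{jaco_mono} with $d=3$,
\[
|E_2|\gtrsim\int_\Phi|J_3(z_4,z_5,z_6)|^2\,d\mu(z_4)\,d\mu(z_5)\,d\mu(z_6)\gtrsim\int_\Phi\max_{4\le i\le6}|z_i|^{2K}\prod_{4\le i<j\le6}|z_j-z_i|^2\,d\mu(z).
\]

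Next I would bound the three Vandermonde factors directly. The pair $(4,5)$ is handled by $(iv)$, giving $|z_5-z_4|^2\gtrsim\beta\,|z_4|^{-\frac{4K}{d(d+1)}}$, and the pairs $(4,6),(5,6)$ — which involve the index $2d=6$ — are handled by $(vi)$, giving
\[
|z_6-z_4|^2\gtrsim\alpha_2\,|z_4|^{-\frac{4K}{d(d+1)}\varepsilon_4}|z_6|^{-\frac{4K}{d(d+1)}(1-\varepsilon_4)},\qquad |z_6-z_5|^2\gtrsim\alpha_2\,|z_5|^{-\frac{4K}{d(d+1)}\varepsilon_5}|z_6|^{-\frac{4K}{d(d+1)}(1-\varepsilon_5)},
\]
where $\varepsilon_4,\varepsilon_5\in\{0,1\}$ record whether $|z_j|<\tfrac12(4\pi\nu)^{-\nu}\alpha_2^{\nu}$ or not. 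The product of these lower bounds is $\beta\alpha_2^2$ times $|z_4|^{-\frac{4K}{d(d+1)}(1+\varepsilon_4)}|z_5|^{-\frac{4K}{d(d+1)}\varepsilon_5}|z_6|^{-\frac{4K}{d(d+1)}(2-\varepsilon_4-\varepsilon_5)}$. I would then write $\max_{4\le i\le6}|z_i|^{2K}\ge\prod_{i=4}^6|z_i|^{2K_i}$ with
\[
K_4=\tfrac{2K}{d(d+1)}(2+\varepsilon_4),\qquad K_5=\tfrac{2K}{d(d+1)}(1+\varepsilon_5),\qquad K_6=\tfrac{2K}{d(d+1)}(3-\varepsilon_4-\varepsilon_5),
\]
which are nonnegative and satisfy $K_4+K_5+K_6=K$ for every $(\varepsilon_4,\varepsilon_5)\in\{0,1\}^2$; these exactly cancel the negative powers above and leave $\prod_{i=4}^6|z_i|^{\frac{4K}{d(d+1)}}$. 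Plugging in,
\[
|E_2|\gtrsim\beta\alpha_2^2\int_\Phi\prod_{i=4}^6|z_i|^{\frac{4K}{d(d+1)}}\,d\mu(z)=\beta\alpha_2^2\,\sigma(\Phi)\gtrsim\alpha_1\beta^2\alpha_2^3,
\]
which is the claim.

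The point that makes the $d=3$ case special — and the only thing genuinely in need of checking — is that the complex Jacobian $J_3(z_4,z_5,z_6)$ involves exactly the three pairs $(4,5),(4,6),(5,6)$, all of which are covered by $(iv)$ and $(vi)$ of Lemma~\ref{setup}; one never needs to separate two distinct even indices lying below $2d$, and this is precisely the configuration that forces the two-stage band structure in Section~3. Moreover the exponent bookkeeping closes exactly (the total exponent one must extract from $\max_{4\le i\le6}|z_i|^{2K}$ is exactly $2K$), so no assumption on the relative sizes of $\alpha_1$ and $\beta$ is required; in particular, when $\beta\gtrsim\alpha_1$ the computation reduces to Lemma~\ref{b>a} with $a_3=d-1=2$. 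Beyond this, the proof is only the routine verification that the exponents $K_i$ above are admissible in each of the four sign cases.
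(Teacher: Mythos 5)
Your proof is correct, but it is not the route the paper takes for Lemma~\ref{trilinearE}. The paper's proof in Section~5 is self-contained and does not invoke Lemma~\ref{setup} at all: it builds a single-level refinement $E_1^1\subset E_1$, $G^1\subset G$, fixes $x_0\in G^1$, and produces only three nested sets $P,Q_{z_1},R_{z_1,z_2}\subset B$ with $\sigma(P)\gtrsim\alpha_1$, $\sigma(Q_{z_1})\gtrsim\beta$, $\sigma(R_{z_1,z_2})\gtrsim\alpha_2$, together with an ad hoc separation lemma (Lemma~\ref{separation}) tailored to this three-step configuration; the Jacobian input there is \eqref{jaco:poly-mono} (polynomial case) rather than Lemma~\ref{jaco_mono}. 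Your approach instead runs the full $d$-level refinement of Lemma~\ref{setup}, obtains $P_1,\dots,P_6$, slices on the last three coordinates $z_4,z_5,z_6$, and uses parts $(iv)$ and $(vi)$ of Lemma~\ref{setup} in place of Lemma~\ref{separation}. The two arguments lead to the same triple integral and the same exponent bookkeeping, and I have checked that your choice $K_4=\frac{2K}{d(d+1)}(2+\varepsilon_4)$, $K_5=\frac{2K}{d(d+1)}(1+\varepsilon_5)$, $K_6=\frac{2K}{d(d+1)}(3-\varepsilon_4-\varepsilon_5)$ is nonnegative and sums to $K$ in all four sign cases, so the computation does close for $d=3$. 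What the paper's version buys is independence from Section~3 (useful since the point of Section~5 is precisely that the $d=3$ case can be dispatched without the heavier machinery); what your version buys is the conceptual clarification that Lemma~\ref{b>a} already yields the sharp constant because $a_3=d-1$, so the band structure is genuinely superfluous. One small caveat about your closing remark: the heuristic ``one never needs to separate two distinct even indices lying below $2d$'' does not actually characterize when the band structure is avoidable — for $d=4$ the only even index below $2d$ is $6$, yet $a_4=4>3=d-1$ and the band structure is required. The correct criterion is simply that $a_d=d-1$, which happens for $d=2,3$ but fails for $d\ge4$ because the Vandermonde product over $\{d+1,\dots,2d\}$ accrues too many $\beta^{1/2}$ factors via $(iv)$.
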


The proof is similar to the real case, since we make a comparison between the absolute values of complex variables. (See \cite{DLW}.)

\begin{proof}
Since $ \langle \TTchi G , \chi_{E_1} \rangle = \langle \chi_G, \Tchi {E_1} \rangle \ge \alpha_1 |G| = \beta |E_1|$, we can define a set
\begin{align*}
E_1^1 = \{ y \in E_1 : \TTchi G(y) \ge \beta/2 \}.
\end{align*}
It follows that
\begin{align*}
\langle \Tchi{E_1^1},\chi_G \rangle = \langle \chi_{E_1^1}, \TTchi G \rangle = \langle \chi_{E_1} , \TTchi G \rangle - \langle \chi_{E_1\setminus E_1^1} ,
\TTchi G \rangle \ge \alpha_1 |G| - \frac{\beta}{2} |E_1| = \frac {\alpha_1} 2 |G|.
\end{align*}
Thus we can define
\begin{align*}
G^1 = \{ x\in G : \Tchi{E_1^1} (x) \ge \alpha_1/4 \}.
\end{align*}
One can see that $G^1$ is not empty. 

For $x_0 \in G^1$, we set
\[
P = \{ z_1 \in B : x_0 - h(z_1) \in E_1^1\} \textrm{ and then }
\sigma(P) = \Tchi{E_1^1}(x_0) \ge \alpha_1/4.
\]
For all $z_1 \in P$, we also set
\[
Q_{z_1} = \{ z_2 \in B : x_0 - h(z_1)+h(z_2) \in G \} \textrm{ and then } \sigma(Q_{z_1}) = \TTchi{G}(x_0 - h(z_1)) \ge \beta/2.
\]
For $z_1 \in P$ and $z_2 \in Q_{z_1}$, we set
\[
R_{z_1,z_2} = \{ z_3 \in B : x_0 -h(z_1) + h(z_2) -h(z_3) \in E_2\}.
\]
Then we see that $\sigma(R_{z_1,z_2}) = \Tchi{E_2}(x_0 -h(z_1)+h(z_2))  \ge \alpha_2$.

Let $\mathcal S = \{ (z_1,z_2,z_3) : z_1 \in P ,\, z_2 \in Q_{z_1},\, z_3 \in R_{z_1,z_2} \}$.
And if we set
\begin{align}\label{Fun_Phi}
\Phi_h(z_1 , z_2, z_3) = - h(z_1)+ h(z_2) - h(z_3),
\end{align}
then $x_0 + \Phi_h(\mathcal S) \subset E_2$.
From Lemma \ref{jaco_poly} and B\'ezout's theorem, we have that
\begin{align*} \label{Jacobian_E}
|E_2| & \ge C \iiint_{\mathcal S} |J_{\mathbb R} \Phi_h(z_1,z_2,z_3)|\, d\mu(z_1)d\mu(z_2)d\mu(z_3) \notag \\
& = \iiint_{\mathcal S} |J_{\mathbb C} \Phi_h(z_1,z_2,z_3) |^2 \, d\mu(z_1)d\mu(z_2)d\mu(z_3)\notag \\
& \ge C \iiint_{\mathcal S} \max\{|z_1|,|z_2|,|z_3|\}^{2k} |z_2-z_1|^2|z_3-z_1|^2|z_3-z_2|^2 d\mu(z_1)d\mu(z_2)d\mu(z_3)
\end{align*}
whenever $z_1,z_2,z_3 \in \mathcal S$. The last integrand is obtained from \eqref{jaco:poly-mono}.

To obtain a lower bound of the last integral, we follow the argument in \cite{DLW}.
Let us define a set
\[
B_\alpha = \{ z\in B : |z| \le ( 16 \pi \nu)^{-\nu} \alpha^{\nu} \},
\]
for $\nu = \frac 3{k+6}$ and  a fixed $k$ as in the definition of $T$.
Then we see that
\[
\sigma(B_\alpha) = \int_{B_\alpha} |z|^{\frac k 3} d\mu(z)
 \le 2 \pi \int_{0}^{( 16\pi\nu )^{-\nu}\alpha^{\nu}} r^{\frac{k}{3}+1} d r = 2 \pi \nu (16\pi\nu)^{-1} \alpha = \frac \alpha 8.
\]
Thus we may assume that $|z_1| \ge ( 16 \pi \nu)^{-\nu} \alpha_1^{\nu}$ on $P$.
In fact, we can replace $P$ by $P \setminus B_{\alpha_1}$ since $\sigma(P \setminus B_{\alpha_1}) \ge \sigma(P) - \sigma(B_{\alpha_1}) \ge \frac {\alpha_1}
8$.
We also assume that $|z_2| \ge ( 16 \pi \nu)^{-\nu} \beta^{\nu}$ on $Q_{z_1}$ and $|z_3| \ge ( 16 \pi \nu)^{-\nu} \alpha_2^{\nu}$ on $R_{z_1,z_2}$ in the
same way.

The following lemma implies that we may assume $z_1,z_2$ and $z_3$ are separated from each other.
\begin{lemma}\label{separation}
There exists a small constant $c > 0$ such that for $(z_1,z_2,z_3) \in \mathcal S$
\begin{enumerate}[(i)]
\item $| z_2 - z_1 | \ge c \beta^{\frac 12} |z_2|^{-\frac k 6 \varepsilon_1 } |z_1|^{-\frac k 6 (1-\varepsilon_1)}$, where $\varepsilon_1 = 0$ if $z_1 \in
B_{\beta/2}$, or $\varepsilon_1 = 1$ if $ z_1 \notin B_{\beta/2}$.
\item $| z_3 - z_1 | \ge c \alpha_2^{\frac 12} |z_3|^{-\frac k 6 \varepsilon_2 } |z_1|^{-\frac k 6 (1-\varepsilon_2)}$, where $\varepsilon_2 = 0$ if $ z_1 \in
B_{\alpha_2/2}$, and $\varepsilon_2 = 1$ if $ z_1 \notin B_{\alpha_2/2}$.
\item $| z_3 - z_2 | \ge c \alpha_2^{\frac 12} |z_3|^{-\frac k 6 \varepsilon_3 } |z_2|^{-\frac k 6 (1-\varepsilon_3)}$, where $\varepsilon_3 = 0$ if $ z_2 \in
B_{\alpha_2/2}$, and $\varepsilon_3 = 1$ if $ z_2 \notin B_{\alpha_2/2}$.
\end{enumerate}
\end{lemma}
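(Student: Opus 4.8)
The plan is to derive (i)--(iii) by the ``delete a small disc'' device already used in the proof of Lemma \ref{setup}(iv)--(vi), which is the complex-variable analogue of the interval comparisons of \cite{DLW}. Recall that in the paragraph preceding the lemma we have arranged $|z_1|\ge(16\pi\nu)^{-\nu}\alpha_1^\nu$ on $P$, $|z_2|\ge(16\pi\nu)^{-\nu}\beta^\nu$ on $Q_{z_1}$, $|z_3|\ge(16\pi\nu)^{-\nu}\alpha_2^\nu$ on $R_{z_1,z_2}$, together with $\sigma(P)\gtrsim\alpha_1$, $\sigma(Q_{z_1})\gtrsim\beta$, $\sigma(R_{z_1,z_2})\gtrsim\alpha_2$. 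What the lemma asserts is that we may pass to subsets of comparable $\sigma$-measure on which the stated separations hold, so that these measure lower bounds are preserved; this is all that the Jacobian integration in the proof of Lemma \ref{trilinearE} requires.

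For (i), fix $z_1\in P$ and work inside $Q_{z_1}$. If $z_1\notin B_{\beta/2}$, so that $|z_1|\gtrsim\beta^\nu$, I would delete from $Q_{z_1}$ the disc $D(z_1)=\{z:|z-z_1|<c_0\beta^{1/2}|z_1|^{-k/6}\}$ for a small $c_0$. Its radius is at most $\tfrac14|z_1|$, because $\beta^{1/2}|z_1|^{-k/6}\lesssim|z_1|$ is equivalent to $|z_1|\gtrsim\beta^\nu$ by the identity $\nu(1+\tfrac{k}{6})=\tfrac12$; hence $|z|\sim|z_1|$ on $D(z_1)$ and
\[
\sigma\bigl(D(z_1)\cap Q_{z_1}\bigr)\le\sup_{D(z_1)}|z|^{k/3}\cdot\mu\bigl(D(z_1)\bigr)\lesssim|z_1|^{k/3}\cdot c_0^2\beta|z_1|^{-k/3}=c_0^2\beta,
\]
which is a small fraction of $\sigma(Q_{z_1})\gtrsim\beta$; thus $\sigma(Q_{z_1}\setminus D(z_1))\gtrsim\beta$ and $|z_2-z_1|\ge c_0\beta^{1/2}|z_1|^{-k/6}$ on it. If instead $z_1\in B_{\beta/2}$, no deletion is needed: then $|z_1|<\tfrac12|z_2|$ (since $|z_2|\ge(16\pi\nu)^{-\nu}\beta^\nu$), so $|z_2-z_1|\ge\tfrac12|z_2|\gtrsim(\beta^\nu)^{1+k/6}|z_2|^{-k/6}=\beta^{1/2}|z_2|^{-k/6}$, again by $\nu(1+\tfrac{k}{6})=\tfrac12$. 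In either case (i) holds after replacing $Q_{z_1}$ by the subset produced.

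Items (ii) and (iii) are obtained by the identical argument on $R_{z_1,z_2}$, with $\alpha_2$ in place of $\beta$ and $z_3$ in the role of the new variable: fixing $z_1$ (resp. $z_2$) one deletes from $R_{z_1,z_2}$ the disc of radius $c_0\alpha_2^{1/2}|z_1|^{-k/6}$ (resp. $c_0\alpha_2^{1/2}|z_2|^{-k/6}$) when the fixed variable lies outside $B_{\alpha_2/2}$, and otherwise reads off the bound from $|z_3-z_j|\ge\tfrac12|z_3|\gtrsim\alpha_2^{1/2}|z_3|^{-k/6}$. Here both deletions (for $z_1$ and for $z_2$) are performed on the same set $R_{z_1,z_2}$, so one checks that their combined $\sigma$-mass is $O(c_0^2\alpha_2)$, still a small fraction of $\sigma(R_{z_1,z_2})\gtrsim\alpha_2$ once $c_0$ is small, uniformly in $z_1,z_2$. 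Restricting $\mathcal S$ to the product of the resulting subsets yields the lemma.

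This argument is routine; the only point demanding care is the case split on whether the fixed variable lies in the relevant small disc $B_{\cdot/2}$, together with the verification in the ``outside'' case that $|z_{\mathrm{new}}|\sim|z_{\mathrm{old}}|$ on the deleted disc so that the $\sigma$-measure estimate closes. This is precisely the step in which the relation $\nu=\tfrac{3}{k+6}$, equivalently $\nu(1+\tfrac{k}{6})=\tfrac12$, is used; there is no essential new difficulty beyond the real-variable case treated in \cite{DLW}.
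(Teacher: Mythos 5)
Your proposal is correct and follows essentially the same route as the paper: a case split on whether the fixed variable lies in the small disc $B_{\beta/2}$ (resp.\ $B_{\alpha_2/2}$), with the ``outside'' case handled by deleting a small disc around the fixed variable, verifying that $|z|\sim|z_1|$ on that disc so that its $\sigma$-mass is $O(c_0^2\beta)$, and the ``inside'' case handled directly via the lower bound on $|z_2|$. The only nitpick is that when $z_1\in B_{\beta/2}$ you claim $|z_1|<\tfrac12|z_2|$, whereas the hypothesis actually yields $|z_1|\le 2^{-\nu}|z_2|$ with $2^{-\nu}\in(\tfrac12,1)$; this is harmless since all the argument needs is $|z_2-z_1|\ge(1-2^{-\nu})|z_2|\gtrsim|z_2|$.
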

\begin{proof}
We will show $(i)$. The remaining cases can be shown in a similar way. First, we consider the case $z_1  \in B_{\beta/2}$, where $|z_1 | \le
(16\pi\nu)^{-\nu} (\beta/2)^\nu < |z_2|/ 2^\nu$. Then we have that
\[
|z_2 - z_1| \gtrsim |z_2| > (16\pi\nu)^{-\frac12} \beta^{\frac12} |z_2|^{-\frac k 6}
\]
since $ |z_2 | > (16 \pi \nu)^{-\nu}\beta^\nu = (16 \pi \nu)^{-3/(k+6)}\beta^{3/(k+6)}$.

If $ z_1  \notin B_{\beta/2}$, then $|z_1| > (16 \pi \nu)^{-\nu}(\beta/2)^\nu$, which also implies $|z_1| > (32\pi\nu)^{-\frac12} \beta^{\frac12}
|z_1|^{-\frac k 6}$. Let us define a set $B_{\beta}(w) $ for $ w \in B$ and $c_0 >0$ by
\[
B_{\beta}(w)  = \{ z \in B : |z-w| \le c_0 \,\beta^{\frac12}|w|^{-\frac k6} \}.
\]
When $z \in B_\beta(z_1)$ for $ z_1 \notin B_{\beta/2}$, we see that
\[
|z - z_1| \le c_0 \,\beta^{\frac12}|z_1|^{-\frac k6}
\le c_0 (32\pi\nu)^{\frac12} |z_1|,
\]
and then
\[
|z| \le |z - z_1| + |z_1| \le(1+ c_0 (32\pi\nu)^{\frac12} ) |z_1|.
\]
It follows that
\begin{align*}
\sigma(B_{\beta}(z_1))
= \int_{B_{\beta}(z_1)} |z|^{\frac{k}{3}} d\mu(z)
 \le \left((1+ c_0 (32\pi\nu)^{\frac12}) |z_1| \right)^{\frac{k}{3}} \times \pi c_0^2 \beta |z_1|^{-\frac k 3}.
\end{align*}
By choosing sufficiently small $c_0$, we have that $\sigma(Q_{z_1} \setminus B_\beta(z_1)) \gtrsim \beta$. Thus we can regard $Q_{z_1} \setminus
B_\beta(z_1)$ as $Q_{z_1}$, and we can say (i) holds for $z_1 \in P$ and $z_2 \in Q_{z_1}$.
\end{proof}

Now we turn to obtaining a lower bound of $E'$. We may assume that $\mathcal S$ can be replaced by a suitable subset of $\mathcal S$, where $z_1,z_2,z_3$
satisfy the observations above. By the above lemma, the Vandermonde determinant $|z_2-z_1| |z_3 - z_1| |z_3-z_2|$ can be treated in each case. In fact, we
have
\[
|z_2- z_1|^2|z_3-z_1|^2|z_3-z_2|^2 \gtrsim \beta \alpha_2^2
|z_1|^{-\frac k 3(2- \varepsilon_1 -\varepsilon_2)} |z_2|^{-\frac k 3 ( \varepsilon_1 + 1-\varepsilon_3)} |z_3|^{ -\frac k 3 (\varepsilon_2 +
\varepsilon_3)}.
\]

Also it is obvious that
\begin{align}\label{torsion}
\max \{|z_1|,|z_2|,|z_3|\}^{2k} \ge |z_1|^a |z_2|^b |z_3|^c
\end{align}
for positive constants $a$,$b$,$c$ satisfying $a + b+c=2k$.

Therefore, if we set
\begin{align*}
a  = \frac k 3  + \frac k 3 ( 2- \varepsilon_1 - \varepsilon_2),
 \quad b  = \frac k 3 + \frac k 3 ( \varepsilon_1 +  1-\varepsilon_3) ,
\textrm{ and } \,c  = \frac k 3 + \frac k 3 (\varepsilon_2 + \varepsilon_3),
\end{align*}
we have
\begin{align*}
|E_2|
&\ge C \iiint_{\mathcal S} \max\{|z_1|,|z_2|,|z_3|\}^{2k} |z_2-z_1|^2|z_3-z_1|^2|z_3-z_2|^2 d\mu(z_1)d\mu(z_2)d\mu(z_3)\\
&\gtrsim \beta \alpha_2^2
\int_P |z_1|^{\frac k3} \int_{Q_{z_1}} |z_2|^{\frac k3} \int_{R_{z_1,z_2}} |z_3|^{\frac k3} d\mu(z_3) d\mu(z_2) d\mu(z_1) \\
&\gtrsim \alpha_1 \beta^2 \alpha_2^3.
\end{align*}
This completes the proof.
\end{proof}

To obtain sharper Lorentz space esimates, we need another refinement of \eqref{Eab} as follows.
\begin{lemma}\label{trilinear_3d}
Let $F_1,F_2, G \subset \mathbb R^6$ be measurable sets with finite measure. Suppose that
\begin{align*}
\TTchi{F_1}(y) \ge \beta_1 \textrm{   and   } \TTchi{F_2} (x) \ge \beta_2
\end{align*}
for all $y \in G$ and $\beta_1 \le \beta_2$. Then
\begin{align*}
| F_2 | \gtrsim \alpha^3 \,\beta_1\, \beta_2^2,
\end{align*}
where $\alpha = \beta_1 \frac{|G|}{|F_1|}$.
\end{lemma}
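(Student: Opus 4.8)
The plan is to adapt the proof of Lemma~\ref{trilinearE} (and the $d=3$ instance of Lemma~\ref{trilinearF}), interchanging the roles of $T$ and $T^{\ast}$ and of the sets $E_i,F_i$. For $d=3$ the holomorphic Jacobian $J_{\mathbb C}$ of $\Phi_h$ in \eqref{Fun_Phi} is already $3$-dimensional and nondegenerate on each $B_\ell$ (Lemma~\ref{jaco_poly}, via \eqref{jaco:poly-mono}), so no band structure is needed; everything is a direct iterated-configuration argument.

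First I would record the pairing identity $\langle \TTchi{F_1},\chi_G\rangle=\langle\chi_{F_1},\Tchi G\rangle\ge\beta_1|G|=\alpha|F_1|$ and combine it with the hypothesis $\TTchi{F_1}\ge\beta_1$ on $G$ to run the same two-step Chebyshev refinement as in Lemma~\ref{trilinearE}: pass first to $F_1^1\subset F_1$ on which $\Tchi G\gtrsim\alpha$, then to $G^1\subset G$ on which $\TTchi{F_1^1}\gtrsim\beta_1$. As in the proofs of Lemmas~\ref{setup} and \ref{separation}, before this I would replace $T$ by the operator obtained by deleting the part of the domain inside a ball $B_\gamma=\{|z|<(16\pi\nu)^{-\nu}\gamma^\nu\}$ with $\gamma=\max\{\alpha,\beta_1\}$ and $\nu=3/(k+6)$, whose $\sigma$-measure is $\le\gamma/8$; this costs only a constant and forces $|z_j|$ to be bounded below by a power of the relevant threshold from then on. Fixing $y_0\in G^1$, I would build the nested configuration sets $P_1=\{z_1: y_0+h(z_1)\in F_1^1\}$ with $\sigma(P_1)\gtrsim\beta_1$, then for $z_1\in P_1$ the set $P_2=\{z_2: y_0+h(z_1)-h(z_2)\in G\}$ with $\sigma(P_2)\gtrsim\alpha$ (using $\Tchi G\gtrsim\alpha$ on $F_1^1$), and then for $z_1\in P_1,z_2\in P_2$ the set $P_3=\{z_3: y_0+h(z_1)-h(z_2)+h(z_3)\in F_2\}$ with $\sigma(P_3)\gtrsim\beta_2$ (using $\TTchi{F_2}\ge\beta_2$ on $G$). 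With $\Psi_h:=-\Phi_h$, i.e. $\Psi_h(z_1,z_2,z_3)=h(z_1)-h(z_2)+h(z_3)$, and $\mathcal S$ the union of these fibres, one has $y_0+\Psi_h(\mathcal S)\subset F_2$ and $|J_{\mathbb C}\Psi_h|=|J_{\mathbb C}\Phi_h|$, so \eqref{jaco:poly-mono} (or Lemma~\ref{jaco_mono}) applies on each $B_\ell$.

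Next, as in Lemma~\ref{separation}, I would pass to subsets of the $P_j$ that still satisfy $(i)$--$(iii)$ by carving out complex balls $B(z_i)$ whose $\sigma$-measure is controlled by the threshold of the later variable, thereby obtaining pairwise lower bounds for $|z_i-z_j|$ in terms of $\alpha,\beta_1,\beta_2$ and negative powers of $|z_i|$. Finally, by B\'ezout's theorem (finite generic multiplicity of the polynomial map $\Psi_h$) one gets $|F_2|\gtrsim\int_{\mathcal S}|J_{\mathbb R}\Psi_h|=\int_{\mathcal S}|J_{\mathbb C}\Psi_h|^2\gtrsim\int_{\mathcal S}\max_i|z_i|^{2k}\prod_{i<j}|z_i-z_j|^2$; substituting the separation bounds, splitting the torsion factor as $\max_i|z_i|^{2k}\ge\prod_i|z_i|^{a_i}$ with $\sum_i a_i=2k$ and the $a_i$ chosen to cancel exactly the leftover negative powers of $|z_i|$, and recognizing what is left as $\sigma(P_1)\sigma(P_2)\sigma(P_3)$ up to the threshold prefactors, one arrives at $|F_2|\gtrsim\alpha^3\beta_1\beta_2^2$. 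The main obstacle is precisely this last accounting: one must arrange which variable is separated against which (and with which of the thresholds $\alpha,\beta_1,\beta_2$) and pick the $a_i$ so that the product of thresholds comes out to exactly $\alpha^3\beta_1\beta_2^2$ rather than one of the other admissible distributions — this is the combinatorial juggling that, for $d\ge 4$, is what necessitates the band-structure argument of Section~3.
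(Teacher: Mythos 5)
The overall architecture you describe---truncation, nested Chebyshev refinements to get $P'$, $Q'_{z_1}$, $R'_{z_1,z_2}$ with $\sigma$-measures $\gtrsim\beta_1,\alpha,\beta_2$, B\'ezout plus the Jacobian lower bound, and a choice of exponents $a_i$ to cancel the negative powers of $|z_i|$---is exactly the paper's setup. But your last paragraph papers over the step where the actual work happens, and as written your plan would not reach the claimed bound when $\beta_2\ll\alpha$.

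Here is the concrete problem. The separation estimates you describe (the analogue of Lemma~\ref{separation}, obtained by deleting balls of $\sigma$-measure at most a small multiple of the corresponding threshold) give
$|z_2-z_1|\gtrsim\alpha^{1/2}|\cdot|^{-k/6}$, $|z_3-z_1|\gtrsim\beta_2^{1/2}|\cdot|^{-k/6}$, and $|z_3-z_2|\gtrsim\beta_2^{1/2}|\cdot|^{-k/6}$. Substituting these and absorbing the negative powers into $\max_i|z_i|^{2k}$, the Vandermonde contributes $\alpha\beta_2^2$, and the remaining $\prod|z_i|^{k/3}$ integrates to $\sigma(P')\sigma(Q')\sigma(R')\gtrsim\beta_1\alpha\beta_2$, for a total of $\alpha^2\beta_1\beta_2^3$. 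This dominates $\alpha^3\beta_1\beta_2^2$ only when $\beta_2\gtrsim\alpha$; when $\beta_2\ll\alpha$ it is strictly smaller. No rearrangement of the thresholds or re-choice of the $a_i$ fixes this, because the $\sigma$-measure of $R'$ is only $\gtrsim\beta_2$, so the ball one can afford to carve out around $z_1$ or $z_2$ inside $R'$ has $\sigma$-measure $\lesssim\beta_2$---one simply cannot extract an $\alpha^{1/2}$-separation for $z_3$ by the mechanism you propose. The paper's fix is a further geometric case split: set $B(z_1),B(z_2)$ to be balls of radius $\tfrac13 c\,\alpha^{1/2}|z_1|^{-k/6}$ (comparable to the $z_1$--$z_2$ separation). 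For any $z_3$, either $z_3\in B(z_1)$ (then $|z_3-z_2|\gtrsim\alpha^{1/2}|z_1|^{-k/6}$ since $z_1$ and $z_2$ are far apart), or $z_3\in B(z_2)$ (then $|z_3-z_1|\gtrsim\alpha^{1/2}|z_1|^{-k/6}$), or $z_3$ is outside both (then both differences are $\gtrsim\alpha^{1/2}|z_1|^{-k/6}$). In each sub-case one of the $\beta_2^{1/2}$-separations is upgraded for free to an $\alpha^{1/2}$-separation, which is exactly what turns $\alpha^2\beta_1\beta_2^3$ into $\alpha^3\beta_1\beta_2^2$. This trichotomy on the position of $z_3$ relative to $\{z_1,z_2\}$ (which, as the introduction notes, is precisely what motivated the complex ball-based band structure of Section~3) is the missing ingredient in your proposal, not merely a bookkeeping choice of the $a_i$.
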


\begin{proof}
Following the proof of Lemma \ref{trilinearE}, we have that
\begin{align*}
F_1^1 &= \{ x \in F_1 : \Tchi{G} (x) \ge  \alpha/2 \} \\
G^1  & = \{ y \in G : \TTchi{F_1^1} (y) \ge  \beta_1/4 \}.
\end{align*}
Then we can construct the sets contained in $B$ as follows.
For $y_0 \in G^1$, we define
\begin{align*}
&P' = \{z_1 \in B : y_0 + h(z_1) \in F_1^1 \},  &(\sigma(P') \ge  \beta_1/4), \\
&Q'_{z_1} = \{ z_2 \in B : y_0 + h(z_1) - h(z_2) \in G \}, &(\sigma(Q'_{z_1}) \ge  \alpha/2), \\
&R'_{z_1,z_2} = \{ z_3 \in B : y_0 + h(z_1) - h(z_2) + h(z_3) \in F_2 \}, &(\sigma(R'_{z_1,z_2}) \ge \beta_2).
\end{align*}
If we set $\mathcal S' = \{ (z_1,z_2,z_3) : z_1 \in P', z_2 \in Q'_{z_1}, z_3 \in R'_{z_1,z_2} \}$ and $\Phi'_h (z_1,z_2,z_3) = h(z_1) - h(z_2) + h(z_3)$,
then $y_0 + \Phi'_h (\mathcal S') \subset F_2 $. Again we have that
\[
|F_2| \ge C \iiint_{\mathcal S'} \max\{|z_1|,|z_2|,|z_3|\}^{2k} |z_2-z_1|^2|z_3-z_1|^2|z_3-z_2|^2 d\mu(z_1)d\mu(z_2)d\mu(z_3)
\]

We also have an analogue of Lemma \ref{separation}:
\begin{enumerate}[(i)]
\item $|z_1| \ge (16\pi\nu)^{-\nu}\beta_1^\nu$, $|z_2| \ge (16\pi\nu)^{-\nu}\alpha^\nu$, and $|z_3| \ge (16\pi\nu)^{-\nu}\beta_2^\nu$.
\item $| z_2 - z_1 | \ge c \alpha^{\frac 12} |z_2|^{-\frac k 6 \varepsilon_1 } |z_1|^{-\frac k 6 (1-\varepsilon_1)}$, where $\varepsilon_1 = 0$ if $ z_1 \in
B_{\alpha_1/2}$, or $\varepsilon_1 = 1$ if $ z_1 \notin B_{\alpha_1/2}$.
\item $| z_3 - z_1 | \ge c \beta_2^{\frac 12} |z_3|^{-\frac k 6 \varepsilon_2 } |z_1|^{-\frac k 6 (1-\varepsilon_2)}$, where $\varepsilon_2 = 0$ if $ z_1 \in
B_{\beta_2/2}$, and $\varepsilon_2 = 1$ if $ z_1 \notin B_{\beta_2/2}$.
\item $| z_3 - z_2 | \ge c \beta_2^{\frac 12} |z_3|^{-\frac k 6 \varepsilon_3 } |z_2|^{-\frac k 6 (1-\varepsilon_3)}$, where $\varepsilon_3 = 0$ if $ z_2 \in
B_{\beta_2/2}$, and $\varepsilon_3 = 1$ if $ z_2 \notin B_{\beta_2/2}$.
\end{enumerate}
where $\nu = 3/ (k+6)$.

We consider two cases: $\beta_2 \gtrsim \alpha$ and $\beta_2 \ll \alpha$.

{\sl Case 1 :  $\beta_2 \gtrsim \alpha$.}
In this case, we follow the proof of Lemma \ref{trilinearE}. Then we obtain
\[
|F_2| \gtrsim \alpha^2 \beta_1 \beta_2^3 \gtrsim \alpha^3 \beta_1 \beta_2^2
\]
by (i)--(iv) above.

{\sl Case 2 : $\beta_2 \ll \alpha$.} In this case, we assume that $z_1 \in B_{\alpha/2}$. Then $|z_2 - z_1 |\ge c \alpha^{\frac12} |z_1|^{-\frac k 6}$.
(When $z_1 \notin B_{\alpha/2}$, we start with $ |z_2 - z_1 |\ge c \alpha^{\frac12} |z_2|^{-\frac k 6}$.)
We consider two balls given by
\begin{align*}
B(z_1) &= \{ z : |z-z_1| < \frac13 c \alpha^{\frac12} |z_1|^{-\frac k 6} \} \\
B(z_2) &= \{ z : |z-z_2| < \frac13 c \alpha^{\frac12} |z_1|^{-\frac k 6} \}.
\end{align*}
Then $z_3$ can be located in $B(z_1)$, $B(z_2)$, or $(B(z_1)\cup B(z_2))^c$. If $z_3 \in B(z_1)$, then  (iv)  can be replaced by
\[
|z_3 - z_2| \geq \frac13 c \alpha^{\frac12} |z_1|^{-\frac k 6}.
\]
By \eqref{torsion} with $a = k + \frac {k}{3}(1-\varepsilon_2)$, $b= \frac k 3$, $c=\frac k3+\frac {k}{3}\varepsilon_2$, and by (i)--(iii) above, it follows
that
\begin{align*}
|F_2| &\ge C \iiint_{\mathcal S'} \max\{|z_1|,|z_2|,|z_3|\}^{2k} |z_2-z_1|^2|z_3-z_1|^2|z_3-z_2|^2 d\mu(z_1)d\mu(z_2)d\mu(z_3) \\
& \gtrsim \iiint_{\mathcal S'} |z_1|^a |z_2|^b |z_3|^{c}
\alpha^2 \beta_2 |z_1|^{-\frac{2k}{3} - \frac k 3(1-\varepsilon_2)} |z_3|^{-\frac k 3 \varepsilon_2} d\mu(z_1)d\mu(z_2)d\mu(z_3) \\
& \gtrsim \alpha^2 \beta_2 \iiint_{\mathcal S'} |z_1|^{\frac k 3} |z_2|^{\frac k 3} |z_3|^{\frac k 3} d\mu(z_1)d\mu(z_2)d\mu(z_3) \\
& \gtrsim \alpha_1^3  \beta_1 \beta_2^2.
\end{align*}

If $z_3 \in B(z_2)$, then we replace (iii) by $|z_3 - z_1| \geq \frac13 c \alpha^{\frac12} |z_1|^{-\frac k 6}$. By choosing appropriate $a,b,c$, we obtain
$|F_2| \gtrsim \alpha^3 \beta_1 \beta_2$ again.

If $z_3 \in (B(z_1)\cup B(z_2))^c$, we have $|z_3 - z_1| \geq \frac13 c \alpha^{\frac12} |z_1|^{-\frac k 6}$ and $|z_3 - z_2| \geq \frac13 c \alpha^{\frac12} |z_1|^{-\frac k 6}$.
Thus we see that $|F_2|\gtrsim \alpha^4 \beta_1 \beta_2 \gtrsim \alpha^3 \beta_1 \beta_2^2$ as desired.
\end{proof}

\appendix
\section{Proof of Theorems \ref{thm:monomial} and \ref{thm:poly}}
In this section,  for the sake of completeness  we present a detailed proof of \eqref{main_inq:mono} assuming Lemmas \ref{d-trilinear} and  \ref{trilinearF}.
(Theorem 1.2 is implied by Lemmas \ref{trilinearE} and \ref{trilinear_3d}.)
We  closely  follow the argument due to Stovall \cite{S1}. (See also \cite{Ch2}.)
We begin with establishing a weaker version $T : L^{p,u}(\mathbb R^{2d}) \rightarrow L^{q,\infty}(\mathbb R^{2d})$, which implies the weak type $(p,q)$. By the argument in \cite{Ch2}, the weak type $(p,q)$ gives the Lorentz space boundedness of $T : L^{p}(\mathbb R^{2d}) \rightarrow L^{q,p+\epsilon}(\mathbb R^{2d})$ for any $\epsilon >0$.
\begin{lemma}\label{weak_ver}
Let $ 1 \leq u < q = \frac{d(d+1)}{2(d-1)}$ and $p = \frac{d+1}{2} $. Also let $F$ be a (Borel) measurable set and $f \in L^{p,u}$. Then there exists a
constant $C > 0$, depending only on $p,q,u$, such that
\begin{equation*}
\left< Tf, \chi_F \right> \leq C \|f\|_{L^{p, u}} |F|^{\frac{1}{q'}}.
\end{equation*}
Here, $|F|$ is the Lebesgue measure of $F$ on $\mathbb R^{2d}$ and $1/q' = 1 - 1/q$.
\end{lemma}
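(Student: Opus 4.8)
The plan is to deduce the estimate from the refined restricted weak type bound in Lemma \ref{d-trilinear}, following the combinatorial scheme of Stovall \cite{S1} (see also \cite{Ch2}). I would begin with the standard reductions: by homogeneity, density and monotone convergence it suffices to treat a nonnegative quasi-step function $f=\sum_{j\in\mathbb Z}2^j\chi_{E_j}$ with the $E_j$ pairwise disjoint and only finitely many nonzero, so that $Tf=\sum_j 2^j\Tchi{E_j}$; after normalizing $\|f\|_{L^{p,u}}=1$, which for such an $f$ forces $\sum_j 2^{ju}|E_j|^{u/p}\lesssim1$ and in particular $|E_j|\lesssim 2^{-jp}$, the goal becomes
\[
\sum_j 2^j\,\langle \Tchi{E_j},\chi_F\rangle\lesssim |F|^{\frac{1}{q'}}.
\]
It is worth noting at the outset that the trivial endpoint bounds $\|T\|_{L^1\to L^1},\,\|T\|_{L^\infty\to L^\infty}\lesssim 1$ only give $\langle \Tchi{E_j},\chi_F\rangle\lesssim\min(|E_j|,|F|)$, and inserting these into the sum produces the exponent $|F|^{1/p'}$, i.e.\ the trivial $L^p\to L^p$ estimate; since $p<q$ we have $1/p'<1/q'$, so the $L^p$-improving nature of $T$ --- encoded in Lemma \ref{d-trilinear} --- is genuinely needed to gain the stronger power of $|F|$.

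The heart of the argument is a localization combined with an application of Lemma \ref{d-trilinear} to \emph{pairs} of the pieces $E_j$. Writing $\omega_j=\langle \Tchi{E_j},\chi_F\rangle$ and $\gamma_j=\omega_j/|F|$, a pigeonhole in the average produces for each $j$ a subset $G_j\subset F$ with $|G_j|\gtrsim\omega_j$ on which $\Tchi{E_j}\gtrsim\gamma_j$. Applied with $E_1=E_2=E_j$ and $G=G_j$, Lemma \ref{d-trilinear} only recovers \eqref{Eab1}, and one checks that this by itself loses a positive power of $2^j$ at large $j$ and is therefore not summable. The gain comes from applying the lemma with $E_1=E_{j_1}$, $E_2=E_{j_2}$ for distinct indices whose contributions, after a further dyadic decomposition of the set $\{x\in F:Tf(x)\sim 2^m\}$, concentrate on a common subset $G\subset G_{j_1}\cap G_{j_2}$ (and, iterating, along chains of such indices): the extra factor $(\alpha_2/\alpha_1)^d$ present in Lemma \ref{d-trilinear} but absent from \eqref{Eab1} is precisely the surplus which, once translated into relations among the $\omega_j$ and $|E_j|$ and inserted into the constraint $\sum_j 2^{ju}|E_j|^{u/p}\lesssim1$, forces the contributions of successive dyadic blocks --- indexed by the output scale $m$ and by the dyadic sizes of the ratios $\gamma_j$ --- to decay geometrically. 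Summing that geometric series yields the desired $|F|^{1/q'}$; its ratio is a power of $2$ whose exponent degenerates as $u\uparrow q$, which is exactly where the hypothesis $u<q$ is used.

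The step I expect to be the main obstacle is the combinatorial bookkeeping in this localization/chaining: one must organize the several dyadic parameters (the height $j$, the output scale $m$, and the size ratios among the $\gamma_j$) and guarantee that enough of the sets $G_j$ overlap so that Lemma \ref{d-trilinear} can be applied to pairs and then iterated along chains, in such a way that all the resulting gains assemble into a single summable series; this is the content of Stovall's scheme in \cite{S1}, carried out with real parameters replaced by the complex $z_i$ and distances between them by radii of balls in $\mathbb C$, just as in the band-structure argument above. Two features lighten the work: the dichotomy $\beta\gtrsim\alpha$ versus $\beta\ll\alpha$ that complicates the proof of Lemma \ref{d-trilinear} does not reappear here, since that lemma is invoked only as a black box; and the dual refinement Lemma \ref{trilinearF} is not needed for this $L^{q,\infty}$ endpoint --- it enters only afterwards, when the output Lorentz exponent is improved from $\infty$ to a finite value.
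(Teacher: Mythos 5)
Your plan is essentially the paper's proof: it follows Stovall's scheme by decomposing $f$ into level sets $E_k$, pigeonholing within $F$ to produce sets $G_k$ on which $T\chi_{E_k}$ is large, and deriving a contradiction from a large overlap $|G_k\cap G_\ell|$ by applying Lemma~\ref{d-trilinear} to the pair $(E_k,E_\ell)$ with $G=G_k\cap G_\ell$, the extra factor $(\alpha_2/\alpha_1)^d$ forcing $|k-\ell|\lesssim m$ and hence geometric summability, with $u<q$ entering exactly where you indicate. You are also correct that Lemma~\ref{trilinearF} plays no role at this $L^{q,\infty}$ endpoint. One small inaccuracy in your sketch: the parameter $m$ in the paper is not the output scale of $Tf$ but the dyadic deficit of $\mathscr T(E_k,F)=\langle T\chi_{E_k},\chi_F\rangle$ below the restricted-weak-type ceiling $C|E_k|^{1/p}|F|^{1/q'}$, and correspondingly the lower bound on the pigeonholed sets is $|G_k|\gtrsim 2^{-mq'}|F|$, obtained by comparing $\mathscr T(E_k,G_k)\sim\mathscr T(E_k,F)$ with the restricted weak type estimate applied to $G_k$; there is no iteration along chains --- a single pairwise application suffices once $\mathscr K_{m,n}^F$ is split into $Am$-separated blocks.
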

\begin{proof}
Let us set $f=\sum_{k=-\infty}^\infty 2^k \chi_{E_k}$  where the $E_k$'s are pairwise disjoint measurable sets in $\mathbb R^{2d}$. Let us assume that
\[
\nnorm{f}_{L^{p ,u}} \sim  \left(\sum_{k=-\infty}^\infty (2^{k } \abs{E_k}^{\frac{1}{p}})^u \right)^{\frac{1}{u}} = 1.
\]
Then it suffices to show that
\begin{equation}\label{weaktype}
\sum_{k \in \mathbb Z} 2^k \left< \Tchi{E_k},\chi_F \right> \lesssim |F|^{\frac{1}{q'}}.
\end{equation}

Let $\mathscr T(E_k,F)$ denote $\left< \Tchi{E_k}, \chi_F \right> $.
We classify $E_k$ depending on the restricted weak type estimate obtained in Section 3 and the normalization of $f$ above.
For nonnegative integers $m$ and $n$, we define
\begin{equation}\label{k1}
\mathscr K_0^F=\{ k \in \mathbb Z : \mathscr T(E_k ,F)=0\},
\end{equation}
\begin{equation}\label{k2}
\mathscr K_m^F=\{ k \in \mathbb Z : C 2^{-m-1} \abs{E_k}^{\frac{1}{p}} \abs{F}^{\frac{1}{q'}} \le \mathscr T(E_k ,F) \le C 2^{-m} \abs{E_k}^{\frac{1}{p}}
\abs{F}^{\frac{1}{q'}} \},
\end{equation}
\begin{equation}\label{k3}
  \mathscr K_{m, n}^F = \{k \in \mathscr K_m^F : 2^{-n-1} <(2^{k} \abs{E_k}^{\frac{1}{p}})^{u} \le 2^{-n} \}.
\end{equation}
Here the constant $C$ arose from the restricted weak type estimates for $T$.

We split $\mathscr K_{m,n}^F$ into $A m$-separated sets so we define $\{ \mathscr K_{m, n, i}^F \}_{i=1}^{[A m]}$ to be a partition of $\mathscr K_{m,
n}^F$.
Here $A$ is some constant that will be determined later.
Note that $|k -k'|\ge A m$ for any $k,k' \in  \mathscr K_{m, n, i}^F$.
Fix $i$ and set $ \mathscr K = \mathscr K_{m, n, i}^F$ for convenience.

By the normalization of $f$, we have $\sum_{k \in \mathscr K} (2^k \abs{E_k}^{1/p})^{u} \le 1$.
We may assume that $\#\mathscr K>0$.
  By \eqref{k3}, we see that  $\sum_{k \in \mathscr K} 2^{-n}\lesssim 1$, so we obtain $\# \mathscr K \lesssim 2^{n}$.
Now we will find two upper bounds for $\sum{2^k \mathscr T(E_k, F)}$.

Firstly, we obtain that
\begin{align}\label{bound_1}
\sum_{k \in \mathscr K}2^k \mathscr T(E_k,F)  \lesssim  & \sum_{k \in \mathscr K}2^k 2^{-m} \abs{E_{k}}^{\frac{1}{p}}\abs{F}^{\frac{1}{q'}} \\
 \lesssim & \sum_{k \in \mathscr K}  2^{-m} 2^{-\frac{n}{u}} \abs{F}^{\frac{1}{q'}} \nonumber\\
 \lesssim & (\# \mathscr K) 2^{-\frac{n}{u}-m}\abs{F}^{\frac{1}{q'}} \nonumber\\
 \lesssim & 2^{n(1-\frac 1u) -m }\abs{F}^{\frac{1}{q'}}\nonumber
\end{align}
since $\mathscr K$ is a subset of \eqref{k2} and \eqref{k3}.

Secondly, we consider a subset of $F$ related to an average of $\Tchi{E_k}$ on $F$.
For each $k \in \mathscr K$, let
\begin{equation}\label{G_k}
G_{k} =\{x \in F : T \chi_{E_k}(x) \geq \frac{\mathscr T(E_{k},F)}{2\abs{F}}\}.
\end{equation}
It follows that
\begin{align*}
\mathscr T(E_{k}, F \backslash G_{k})
< \frac{\mathscr T(E_k,F)}{2|F|} |F \setminus G_k|
\le \frac12 \mathscr T(E_k,F),
\end{align*}
and then $\mathscr T(E_{k},F) \sim \mathscr T(E_{k},G_{k})$.


Since
\[
C2^{-(m-1)}|E_k|^{\frac{1}{p}} |F|^{\frac{1}{q'}} \le \mathscr T(E_k,F)  \sim \mathscr T(E_{k},G_{k}) \le  C|E_k|^{\frac{1}{p}}|G_k|^{\frac{1}{q'}},
\]
we have
\begin{align}\label{FG_k}
 2^{-mq'} |F| \lesssim |G_k|.
\end{align}
Also we can observe that
\[
\sum_{k \in \mathscr K} |G_k| = \sum_{k \in \mathscr K} \int_F \chi_{G_k} \le \left( \int_F \Big(\sum_{k \in \mathscr K} \chi_{G_k}\Big)^2 \right)^{\frac12}
|F|^{\frac12}.
\]
It follows that
\[
\left( \frac{1}{|F|} \sum_{k \in \mathscr K} |G_k| \right)^2 \le \frac{1}{|F|} \int_F (\sum_{k \in \mathscr K} \chi_{G_k})^2 \le \frac{1}{|F|}\sum_{k \in
\mathscr K} |G_k| + \frac{1}{|F|}\sum_{k\neq\ell}|G_k\cap G_\ell|.
\]
Therefore we have two cases, i.e.
\[
\frac{1}{|F|} \sum_{k \in \mathscr K} |G_k| \lesssim 1 \quad\textrm{or}\quad
\left(  \frac{1}{|F|} \sum_{k \in \mathscr K} |G_k| \right)^2 \lesssim \frac{1}{|F|}\sum_{k\neq\ell}|G_k\cap G_\ell|.
\]
From the latter inequality and \eqref{FG_k}, it holds that
\begin{align} \label{contradiction}
|G_k \cap G_\ell| \gtrsim  2^{-2mq'} |F|
\end{align}
for some $k\neq\ell$. In fact, one can see that $$|F|^{-1} \sum_{k\neq\ell} |G_k \cap G_\ell| \le |F|^{-1} (\# \mathscr K)^2 \max_{k\neq\ell}|G_k \cap
G_\ell|$$ and $$(|F|^{-1} \sum_{ k \in \mathscr K} |G_k|)^2 \gtrsim ( |F|^{-1}  2^{-mq'} |F| (\# \mathscr K))^2$$ by \eqref{FG_k}. So we obtain
$\max_{k\neq\ell}|G_k \cap G_\ell| \gtrsim  2^{-2mq'} |F|$.

Our claim is that \eqref{contradiction} yields a contradiction by Lemma \ref{trilinearF} or \ref{trilinearE}. We postpone the proof for a moment.

Then we may assume that $\sum_{k \in \mathscr K} \abs{G_k} \lesssim \abs{F}$. By this inequality, the second bound for $\sum 2^k \mathscr T(E_k,F)$ can be
derived from
\begin{equation}\label{bound_2}
\begin{aligned}
\sum_{k \in \mathscr K} 2^{k}\mathscr T(E_{k},G_{k}) \lesssim & \sum_{k \in \mathscr K} 2^{k} \abs{E_k}^{\frac{1}{p}} \abs{G_{k}}^{\frac{1}{q'}} \\
\lesssim & \left(\sum_{k \in \mathscr K}(2^{k} \abs{E_k}^{\frac{1}{p}})^{q} \right)^{\frac{1}{q}} \left(\sum_{k \in \mathscr K}\abs{G_{k}} \right)^{\frac{1}{q'}}
\\
\lesssim & (\# \mathscr K 2^{-\frac{ nq }{u}})^{\frac{1}{q}}\abs{F}^{\frac{1}{q'}} \\
\lesssim & 2^{n(\frac{1}{q}-\frac 1 u)}\abs{F}^{\frac{1}{q'}}.
\end{aligned}
\end{equation}
By \eqref{bound_1} and \eqref{bound_2}, we obtain that
\begin{equation}\label{twobounds}
\sum_{k \in \mathscr K} {2^{k}\mathscr T(E_{k},F)} \lesssim \min\left(2^{n(1-\frac 1u) -m }, 2^{n(\frac{1}{q}-\frac 1 u)}\right)\abs{F}^{\frac{1}{q'}}.
\end{equation}
Since $1 \leq u < q$, we have
\begin{equation}\label{bound_final}
\begin{aligned}
\sum_{k \in \mathscr K_{m}^{F}} 2^{k} \mathscr T(E_{k},F)= &
\sum_{n=0}^{\infty} \sum_{i=1}^{\lceil A m\rceil} \sum_{k \in \mathscr K}2^{k} \mathscr T (E_{k}, F) \\
\lesssim & Am \sum_{n=0}^{\infty} \min\left(2^{n(1-\frac 1u) -m }, 2^{n(\frac{1}{q}-\frac 1u)}\right)\abs{F}^{\frac{1}{q'}}  \\
\lesssim & Am \left( \sum_{n > \lceil m q' \rceil} 2^{n(\frac{1}{q}-\frac 1u)} + \sum_{n \le \lceil mq' \rceil} 2^{n(1-\frac 1u) -m } \right)
\abs{F}^{\frac{1}{q'}} \\
\lesssim & Am\, 2^{-mq'(\frac{1}{u} - \frac 1q) } \abs{F}^{\frac{1}{q'}}.
\end{aligned}
\end{equation}
Let us set $0 \leq \varepsilon < q'(\frac{1}{u} - \frac 1q)$. Since there exists a constant $c$ such that $m \le 2^{ \varepsilon m}$ for $m \ge c$, we sum
\eqref{bound_final} over $0 \le m < \infty$ as follows:
\begin{align*}
\sum_{m=0}^\infty\sum_{k \in \mathscr K_{m}^{F}} 2^{k} \mathscr T(E_{k},F)=\sum_{ 0 \le m < c} Am\, 2^{-mq'(\frac{1}{u} - \frac 1q) } \abs{F}^{\frac{1}{q'}}
+ \sum_{ m \ge c} A 2^{-m( q'(\frac{1}{u} - \frac 1q) -\varepsilon) } \abs{F}^{\frac{1}{q'}} \lesssim |F|^{\frac 1 {q'}}
\end{align*}
This gives the desired inequality \eqref{weaktype}.

Now we turn to proving that a contradiction occurs if we assume \eqref{contradiction}.
Let us denote $G = G_k \cap G_\ell$ where $k>l$, $E_1=E_k$, $E_2=E_\ell$, $\alpha_1 = 2^{-m} |E_1|^{\frac{1}{p}} |F|^{-\frac{1}{q}}$, $\beta = \alpha_1
|G||E_1|^{-1}$, and $\alpha_2 = 2^{-m} |E_2|^{\frac{1}{p}}|F|^{-\frac{1}{q}}$.
Then, by Lemma \ref{d-trilinear}(or Lemma \ref{trilinearE}), 
and \eqref{contradiction}, we obtain that
\begin{align*}
| E_2| & \gtrsim \alpha_1^{\frac{d(d+1)}{2}} \,\left( \frac{\beta}{\alpha_1}\right)^{d-1} \, \left(\frac{\alpha_2}{\alpha_1}\right)^{d}  \\
&\gtrsim (2^{-m} |E_1|^{\frac 1{p}} |F|^{-\frac{1}{q}})^{\frac{d(d+1)}{2}} ( 2^{-2m\,q'}  |F| |E_1|^{-1})^{d-1} (|E_2|^{\frac{1}{p}} |E_1|^{-\frac{1}{p}})^d\\
& \gtrsim 2^{-m ( \frac{d(d+1)}{2}+ 2 (d-1) q'  )} |E_1|^{1-\frac{d}{p}} |E_2|^{\frac{d}{p}}.
\end{align*}
It follows that $|E_1| \gtrsim 2^{-m(d+1)(q +  2q' ) } |E_2|$.
Together with the fact $|E_1| \sim 2^{-np/u-kp}$ and $|E_2| \sim 2^{-np/u-\ell p}$ by \eqref{k3},
we have that $2^{m(d+1)(q +  2q' )} \gtrsim 2^{(k-\ell)p}$.
Since $k>\ell$, and the case $k <\ell$ can be obtained in a similar way, we finally obtain that $|k-\ell| \lesssim m(d+1)(q +  2q' )/p$. Since we can take the
constant $A$ to be sufficiently large, this contradicts our construction of $\mathscr K$. This completes the proof.
\end{proof}


Recall that $p = \frac{d+1}{2}$, $q  = \frac{d(d+1)}{2(d-1)}$. The following lemma implies Theorem \ref{thm:monomial} and Theorem \ref{thm:poly}.
In fact, the other three cases ($u \le p < v \le q$, $p \le u < q \le v$, and $u \le p < q \le v$) follow from the next lemma and the fact that $\|f \|_{L^{p,u}} \le \| f \|_{L^{p,u'}}$ whenever $ u' \le u$.


\begin{lemma}
Let $1 \le p \le u <v \le q \le \infty$. For $f \in L^{p,u}$ there exists a constant $C >0$ such that
\begin{equation*}
\| T f \|_{L^{q,v}(\mathbb R^{2d})} \leq C \|f\|_{L^{p,u}(\mathbb R^{2d})}.
\end{equation*}
\end{lemma}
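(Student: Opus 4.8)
The plan is to run, symmetrically for $T$ and its adjoint, the dyadic summation scheme of Stovall \cite{S1} (compare \cite{Ch2}); concretely, the argument is the ``two-sided'' version of the proof of Lemma \ref{weak_ver}, in which the refined trilinear inequalities are used both in the $T$-direction (Lemma \ref{d-trilinear}, or Lemma \ref{trilinearE} when $d=3$) and in the $T^{\ast}$-direction (Lemma \ref{trilinearF}, or Lemma \ref{trilinear_3d} when $d=3$). Since $q=q_d<\infty$ and $q_d>1$, by duality (and replacing $f,g$ by $|f|,|g|$, using that $T$ has a nonnegative kernel) it is enough to estimate the bilinear form. Writing $f=\sum_k 2^k\chi_{E_k}$ and $g=\sum_j 2^j\chi_{F_j}$ with the $E_k$ (resp. the $F_j$) pairwise disjoint and normalized by $\sum_k(2^k|E_k|^{1/p})^{u}=1$ and $\sum_j(2^j|F_j|^{1/q'})^{v'}=1$ (here $1/v'=1-1/v$, and the hypothesis $u<v$ gives $v'<u'$), it suffices to prove
\[
\sum_{k,j}2^{k+j}\,\mathscr{T}(E_k,F_j)\lesssim 1,\qquad \mathscr{T}(E,F):=\langle T\chi_E,\chi_F\rangle .
\]

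The first step is the classification. As in \eqref{k2}--\eqref{k3}, I would sort the pairs $(k,j)$ by three dyadic parameters: the ``defect'' $2^{-m}\sim\mathscr{T}(E_k,F_j)\,|E_k|^{-1/p}|F_j|^{-1/q'}$ relative to the restricted weak type bound for $T$ at $(p_d,q_d)$, the input size $2^{-n}\sim(2^k|E_k|^{1/p})^{u}$, and the output size $2^{-n'}\sim(2^j|F_j|^{1/q'})^{v'}$. For fixed $(m,n,n')$ I would split the surviving pairs into $O(m)$ subfamilies in which the indices $k$ are $Am$-separated, and (by a further splitting) the indices $j$ are $Am$-separated, with $A$ a large constant fixed at the end.

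The heart of the matter is the overlap estimate inside one such subfamily. Passing, as between \eqref{G_k} and \eqref{contradiction}, from each $E_k$ (resp. $F_j$) to the portion $G_k$ (resp. $H_j$) of the pertinent union $F$ of $F_j$'s (resp. union $E$ of $E_k$'s) on which $T\chi_{E_k}$ (resp. $T^{\ast}\chi_{F_j}$) exceeds half its average, one has $\mathscr{T}(E_k,F)\sim\mathscr{T}(E_k,G_k)$ together with a lower bound $|G_k|\gtrsim 2^{-mq'}|F|$ as in \eqref{FG_k}. A Cauchy--Schwarz/double-counting argument then shows that either the number of $k$'s in the subfamily is $O(2^{n})$, or there exist $k\neq\ell$ with $|G_k\cap G_\ell|$ large; in the latter case Lemma \ref{d-trilinear}, applied with $E_1=E_k$, $E_2=E_\ell$ and $G=G_k\cap G_\ell$, forces $|k-\ell|\lesssim m$, contradicting the $Am$-separation once $A$ is large. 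Exchanging the roles of the two variables and invoking Lemma \ref{trilinearF} in place of Lemma \ref{d-trilinear}, one likewise bounds the number of $j$'s by $O(2^{n'})$. These two counts give, for the sub-sum over a fixed $(m,n,n')$-subfamily, two competing upper bounds of the type obtained in \eqref{bound_1} and \eqref{bound_2} (one governed by $u$, one by $v$), and one keeps the smaller.

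It remains to sum over $m,n,n'\ge 0$. This is the routine but delicate part: after taking the minimum of the two bounds at each $(m,n,n')$ one optimizes over $n$ and $n'$ for fixed $m$, exactly as in \eqref{bound_final}, and the resulting exponent of $2^{-m}$ is \emph{strictly} positive precisely because $u<v$ and $p\le u$, $v\le q$; this is where the exponent structure of Lemmas \ref{d-trilinear} and \ref{trilinearF} enters, in particular the relation $s_2/q_d'-r_2/q_d-1>0$ of Lemma \ref{trilinearF}. The geometric series in $m$ then absorbs the $O(m)$ loss from the $Am$-separation, leaving $\sum_{k,j}2^{k+j}\mathscr{T}(E_k,F_j)\lesssim 1$, which is the desired $L^{p,u}\to L^{q,v}$ bound, hence \eqref{main_inq:mono} and Theorems \ref{thm:monomial} and \ref{thm:poly}. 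I expect the main obstacle to be exactly this double-parameter bookkeeping, and in particular checking that the combined geometric series converges for \emph{every} admissible pair $(u,v)$ with $p\le u<v\le q$; the $d=3$ case can be run identically with Lemmas \ref{trilinearE} and \ref{trilinear_3d} replacing the band-structure lemmas, which is simpler since no band structure is needed.
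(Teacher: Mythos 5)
Your proposal identifies the correct ingredients (Stovall's dyadic summation, the defect/size classification, $Am$-separated subfamilies, the trilinear refinements Lemma \ref{d-trilinear} and Lemma \ref{trilinearF} to force separation, and a ``take-the-minimum-of-two-bounds'' strategy), but the organization you describe differs from the paper's in a way that affects whether the final summation closes. The paper \emph{does not} run a single three-parameter sum over $(m,n,n')$. Instead it proceeds in two stages: first it proves Lemma \ref{weak_ver}, the bound $\langle Tf,\chi_F\rangle\lesssim\|f\|_{L^{p,u}}|F|^{1/q'}$, using the pair $(m,n_{\rm input})$ and $k$-separation (via Lemma \ref{d-trilinear}), summing the input-size parameter away entirely to arrive at \eqref{bound_final}, a bound depending only on $m$ and $|F|$; then, in the proof of the present lemma, it re-dyadicizes using only $(m,n_{\rm output})$ and $j$-separation (via Lemma \ref{trilinearF}), treating the first bound as the already-packaged \eqref{bound_final} applied with $F=F_j$, and obtaining the second bound from Lemma \ref{weak_ver} plus H\"older in $j$ with exponents $u,u'$ and the overlap estimate $\sum_{j}|E_{k,j}|\lesssim|E_k|$. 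Thus at no point does the paper need to sum both an input-size and an output-size parameter simultaneously.

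This matters for your sketch because, if one really works with a fixed $(m,n,n')$ class, the analogue of the ``refined'' bound obtained from the $j$-overlap estimate and $(u,u')$-H\"older is independent of the input-size parameter $n$, while the crude bound $2^{n(1-1/u)}2^{n'(1-1/v')}2^{-m}$ \emph{increases} in $n$; so the minimum of just these two does not produce a convergent geometric series in $n$. To close the three-parameter sum one needs a \emph{third} bound, namely the analogue of the paper's \eqref{bound_2} coming from the $k$-overlap estimate $\sum_k|G_{k,j}|\lesssim|F_j|$ and $(q,q')$-H\"older, which supplies the decaying factor $2^{n(1/q-1/u)}$; only after taking the minimum of all three does the sum over $(n,n')$ for fixed $m$ give the desired exponent $-mq'(1-u/q)(1/v'-1/u')<0$ that beats the polynomial loss $O(m^2)$ from the separation splits. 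Your phrase ``two competing bounds'' understates this, and the remark ``exactly as in \eqref{bound_final}'' is not quite right since \eqref{bound_final} already has the input-size sum carried out. The safest route, and the one the paper takes, is to factor through Lemma \ref{weak_ver} first; if you insist on the flatter one-pass argument, be prepared to track the extra refined bound. The rest of your outline -- the use of $G_k$, $E_{k,j}$, the Cauchy--Schwarz double-counting that forces either few indices or a large overlap, the application of Lemma \ref{d-trilinear} resp.\ Lemma \ref{trilinearF} yielding $|k-\ell|\lesssim m$ resp.\ $|j-l|\lesssim m$, and the $d=3$ shortcut via Lemmas \ref{trilinearE} and \ref{trilinear_3d} -- agrees with the paper.
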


\begin{proof}
Let $f= \sum_{k}2^{k} \chi_{E_{k}}$ and $g=\sum_{j}2^{j}\chi_{F_{j}}$ where $E_{k}$ and $F_{j}$ are pairwise disjoint measurable sets. We also assume that
\[
\nnorm{f}_{L^{p ,u}} \sim \left(\sum_{k=-\infty}^\infty (2^{k } \abs{E_k}^{\frac{1}{p}})^u \right)^{\frac{1}{u}} = 1
\]
and
\[
\nnorm{g}_{L^{q' ,v'}} \sim \left(\sum_{j=-\infty}^\infty (2^{j } \abs{F_j}^{\frac{1}{q'}})^{v'} \right)^{\frac{1}{v'}} = 1.
\]
Then it suffices to show that
\[
\sum_{j\in \mathbb Z} 2^j \sum_{k\in \mathbb Z} 2^k \mathscr T(E_k, F_j) \lesssim 1,
\]
where $\mathscr T(E_k, F_j) =  \langle T\chi_{E_k}, \chi_{F_j} \rangle $.


Fix a nonnegative integer $m$, and we set $\mathscr K_0^{F_j}$ as above and
\begin{align*}
\mathscr K_{m}^{F_{j}} = \{ k \in \mathbb Z : c 2^{-m-1} \abs{E_{k}}^{\frac{1}{p}}\abs{F_{j}}^{\frac{1}{q'}} < \mathscr T(E_{k}, F_{j}) \le c 2^{-m}
\abs{E_{k}}^{\frac{1}{p}}\abs{F_{j}}^{\frac{1}{q'}} \}
\end{align*}
for some constant $c >0$.
Instead of $\mathscr K_{m,n}^{F_j}$, we define
\begin{align}\label{J_n}
\mathscr J_{n} = \{j \in \mathbb Z : 2^{-n-1} < (2^j \abs{F_j}^{1/q'} )^{v'} \le 2^{-n} \}
\end{align}
from the normalization of $g$.
We will divide $\mathscr J_{n}$ into $\lceil A'{m}\rceil$ subsets, where $A'$ will be specified later.
For $1 \le i \le [A' {m}]$, let us denote by $\mathscr J_{n,i}^m $ an $A' m$-separated subset of $\mathscr J_{n}$.
Let $\mathscr J=\mathscr J_{n,i}^{m}$ for convenience.
Using the method used to get $\# \mathscr K$, one can see that $\# \mathscr J \leq 2^{n}$ for each $\mathscr J$.

By \eqref{bound_final}, we have that
\begin{equation*}
\begin{aligned}
\sum_{j \in \mathscr J}2^{j} \sum_{k \in \mathscr K_{m}^{F_{j}}}2^{k}\mathscr T (E_{k}, F_{j}) &\lesssim Am\, 2^{-mq'(\frac 1u - \frac 1q) } \sum_{j \in
\mathscr J}2^j \abs{F_{j}}^{1/q'}\\
&\lesssim Am\,2^{-mq'(\frac 1u - \frac 1q) } 2^{-n/v'} \# \mathscr J \\
&\lesssim Am\, 2^{-mq'(\frac 1u - \frac 1q) } 2^{n(1- \frac 1{v'} )}.
\end{aligned}
\end{equation*}

For the second bound, we consider
\begin{align}\label{E_kj}
E_{k,j}= \{x \in E_{k} : T^\ast{\chi_{F_{j}}}(x) \ge \frac{\mathscr T(E_{k}, F_{j})}{2\abs{E_{k}}}\},
\end{align}
which implies that
\begin{align*}
\mathscr T (E_k \backslash E_{k,j}, F_{j}) \le \langle \chi_{E_k \setminus E_{k,j}}, T^\ast \chi_{F_j} \rangle <  \frac{1}{2} \mathscr T (E_{k,j}, F_{j}).
\end{align*}
Therefore, $\mathscr T(E_{k}, F_{j}) \sim \mathscr T (E_{k,j},F_{j})$.
It follows that $2^{-mp} |E_k| \lesssim |E_{k,j}|$, which is similar to \eqref{FG_k}. By the same argument, we get two cases:
\[
\frac{1}{|E_k|} \sum_{j \in \mathscr J} |E_{k,j}| \lesssim 1 \quad\textrm{or}\quad
\left(  \frac{1}{|E_k|} \sum_{j \in \mathscr J} |E_{k,j}| \right)^2 \lesssim \frac{1}{|E_k|}\sum_{j\neq l}|E_{k,j}\cap E_{k,l}|.
\]

Again, we may assume that $\sum_{j \in \mathscr J} \abs{E_{k,j}} \le \abs{E_{k}}$ for each $k \in \mathscr K_m^{F_j}$. In fact, the second inequality and
$2^{-mp}|E_k| \lesssim |E_{k,j}|$ lead to $ |E_{k,j} \cap E_{k,l} | \gtrsim 2^{-2mp} |E_k|$, which implies a contradiction to the definition of $\mathscr
J$. 
We will check this at the end.

By the assumption $u/p \geq 1$, it follows that
\begin{align*}
\nonumber \sum_{j \in \mathscr J} \abs{E_{k,j}}^{\frac up} \le
 \left( \sum_{j \in \mathscr J} \abs{E_{k,j}}\right)^{\frac u p} \leq \abs{E_{k}}^{\frac up},
 \end{align*}
and then
\begin{equation}\label{j-sum}
\sum_{j \in \mathscr J}\sum_{k \in \mathscr K_{m}^{F_j}}2^{uk} \abs{E_{k,j}}^{\frac up} \le \sum_{k \in \mathbb Z} 2^{uk} \abs{E_{k}}^{\frac up}.
\end{equation}
By Lemma \ref{weak_ver}, H\"{o}lder's inequality, and \eqref{j-sum}, we thus obtain that
\begin{align*}
\sum_{j \in \mathscr J}2^j \sum_{k \in \mathscr K_{m}^{F_{j}}}2^{k} \mathscr T (E_{k}, F_{j}) &\sim \sum_{j \in \mathscr J}2^j \sum_{k \in \mathscr
K_{m}^{F_{j}}}2^{k} \mathscr T (E_{k,j}, F_{j}) \\
& \lesssim \sum_{j \in \mathscr J}2^{j} \left(\sum_{k \in \mathscr K_{m}^{F_{j}}} \left(2^{k}\abs{E_{k,j}}^{\frac 1p}\right)^{u} \right)^{\frac
1u}\abs{F_{j}}^{\frac 1{q'}}. \\
& \lesssim \left(\sum_{j\in\mathscr J} \sum_{k \in \mathscr K_{m}^{F_{j}}} 2^{ku} \abs{E_{k,j}}^{\frac u p}  \right) ^{\frac 1u} \left(\sum_{j\in
\mathscr J}(2^{j}\abs{F_{j}}^{\frac1{q'}})^{u'} \right)^{\frac1{u'}} \\
& \lesssim \left( \sum_{k\in \mathbb Z} 2^{ku}\abs{E_{k}}^{\frac up}\right)^{\frac 1u} (\# \mathscr J 2^{-\frac{n }{v'}u'}
)^{\frac 1{u'}}\\
 & \lesssim 2^{n(\frac1{u'}-\frac1{v'})}.
\end{align*}

Now, let $a = q' (1/u - 1/q)$. Then we have
\begin{align*}
\left< Tf, g \right> &= \sum_{j,k}2^{j}2^{k}\mathscr T (E_{k}, F_{j}) \\
&= \sum_{n=0}^{\infty}\sum_{m=0}^{\infty}\sum_{i=1}^{A' m} \sum_{j \in \mathscr J}2^{j} \sum_{k \in \mathscr K_{m}^{F_{j}}} 2^{k} \mathscr T(E_{k}, F_{j}) \\& \lesssim\sum_{n=0}^{\infty}\sum_{m=0}^{\infty}\sum_{i=1}^{A' m} \min \{Am\, 2^{-am  } 2^{n(1-\frac 1{v'})}, 2^{n(\frac1{u'}-\frac1{v'})} \} \\
& \lesssim \sum_{m=0}^{\infty} \left(\sum_{n >\lceil aum \rceil}A A'm^2 \,2^{-am + n(1-\frac 1{v'})}+ \sum_{n \leq \lceil aum \rceil} A'm
\,2^{n(\frac1{u'}-\frac1{v'})} \right)\\
& \lesssim \sum_{m=0} AA'm^2 2^{au(\frac1{u'}-\frac1{v'})m} \\
& \lesssim \sum_{m \le \lceil c \rceil} AA'm^2 2^{au(\frac1{u'}-\frac1{v'})m} + \sum_{m \le \lceil c \rceil} AA' 2^{au(\frac1{u'}-\frac1{v'})m + \varepsilon
m } \\
&\lesssim 1
\end{align*}
where we can take a constant $c >0$ such that $m^2 \le 2^{\varepsilon m}$ when $m \ge c$ and $0 \le \varepsilon < au(1/v' - 1/u')$.


Now we show that $ |E_{k,j} \cap E_{k,l} | \gtrsim 2^{-2mp} |E_k|$ 
leads to  a contradiction in view of Lemma \ref{trilinearF}.
Let us assume $j > l$ and $m\ge1$.
(The case $m=0$  is  excluded in the construction of the set $\mathscr J$.)
To apply Lemma \ref{trilinearF}, let us set $E := E_{k,j} \cap E_{k,l}$, $F_1:= F_j$, $F_2 := F_l$.
By \eqref{E_kj} and by the definition of $\mathscr K_{m}^{F_{j}}$, we have that
\[
\TTchi{F_i} (x) \ge \frac{\mathscr T(E_{k}, F_{i})}{2\abs{E_{k}}} \gtrsim 2^{-m-2} |E_k|^{-\frac{1}{p'}} |F_i|^{\frac{1}{q'}} \,\textrm{ for }\, x \in E,
\]
where $i =1,2$.
Let us set $\beta_i = 2^{-3m} |E_k |^{-1/p'}|F_i|^{1/q'}$  and $\alpha_i : = \beta_i |E| |F_i|^{-1}$ for $i =1,2$, which satisfy the assumption of Lemma \ref{trilinearF}. (Note that $2^{-m-2} \ge 2^{-3m}$ for $m\ge1$. )

%

Then, by Lemma \ref{trilinearF}, and by the assumption $ |E|=|E_{k,j} \cap E_{k,l} | \gtrsim 2^{-2mp} |E_k|$, we obtain that
\begin{align*}
| F_2| & \gtrsim \alpha_1^{r_1} \alpha_2^{r_2} \beta_1^{s_1} \beta_2^{s_2}\\
& \gtrsim (2^{-3m }|E_k|^{-\frac{1}{p'}} |F_1|^{\frac{1}{q'}})^{r_1+s_1} (2^{-3m }|E_k|^{-\frac{1}{p'}} |F_2|^{\frac{1}{q'}})^{r_2+s_2} |E|^{r_1 + r_2} |F_1|^{-r_1} |F_2|^{-r_2}  \\
& \gtrsim  2^{-\frac{3md(d+1)}{2}-mpd(d-1)} |E_k|^{-\frac{d(d+1)}{2p'} +\frac{d(d-1)}{2}} |F_1|^{\frac{ r_1+s_1}{q'} -r_1} |F_2|^{\frac{r_2 + s_2}{q'} - r_2} \\
& = 2^{-\frac{md(d+1)(d+2)}{2}} |F_1|^{\frac{r_2}{q} - \frac{s_2}{q'} +1} |F_2|^{\frac{s_2}{q'} - \frac{r_2}{q}},
\end{align*}
using the conditions $r_1 + r_2 = d(d-1)/2$, $s_1 +s_2 = d$ in Lemma \ref{trilinearF} and $p=(d+1)/2$, $q = d(d+1)/2(d-1)$.

It follows that
\[
|F_1|^{\frac{s_2}{q'}- \frac {r_2} {q}-1  } \gtrsim 2^{-\frac{md(d+1)(d+2)}{2}}  |F_2|^{\frac{s_2}{q'}- \frac {r_2} {q}-1  } .
\]
Since  we have $|F_1| \sim 2^{-nq'/v'-j q'}$ and $|F_2| \sim 2^{-nq'/v'-l q'}$ by \eqref{J_n}, we have that $2^{C m} \gtrsim 2^{j- l}$.
Here, $C = d(d+1)(d +2)/(2q'(s_2/q' - r_2/q +1))$.
Since the case $l < j $ can be obtained in a similar way, we finally obtain that $|j-l| \lesssim Cm$.
If we take the constant $A'$ to be sufficiently large, this contradicts our construction of $\mathscr J$. 
\end{proof}


\section{ The necessary conditions}
We use the notation and terminology in \cite{Ch1, S1}.
We will only treat the nondegenerate case 
when $h(z) = (z,z^2,\dots,z^{d-1},z^d)$, which is an analogue of the moment curve in the real case.
Then $d \sigma (z) \sim d \mu(z) = dudv$ and we may assume that $\mathcal A f (x)= \int_D f(x - h(z) ) d\mu(z)$.

Let $D_{r}$ be an anisotropic scaling in $\mathbb R^{2d}$ given by
\[
D_r (x_1,y_1,x_2,y_2,\dots,x_d,y_d) = (r x_1, r y_1, r^2 x_2, r^2 y_2,\dots, r^d x_d, r^d y_d).
\]
We also define a ball $B(x,\varepsilon)$ of radius $\varepsilon$ centered at $x$ in $\mathbb R^{2d}$.
Then $B(x,\varepsilon) - h(z)$ for $z \in D$ is an $\varepsilon$-neighborhood
of $-h(z)$, translated by $x$.
Hence let us set $N(x, \varepsilon) := B(x,\varepsilon) - h(z)$ for $z \in D$ so that $ y - h(z) \in N(x, \varepsilon)$ whenever $y \in B(x,\varepsilon)$.

First we show that the restricted weak type $(p,q)$ of $\mathcal A$ may hold only for $(p,q) \in \mathcal R$, where $\mathcal R$ is a trapezoid with vertices $(0,0)$, $(1,1)$, $(1/p_d,1/q_d)$, and $(1- 1/q_d, 1-1/p_d)$.

Let $0< r <1$ be a small constant.
We consider $D_r N(0,1)$ in $\mathbb R^{2d}$.
For each $y \in \mathbb R^{2d}$, one can see that $y - h(rz) = y - D_r h(z) \in D_r N(0,1)$ whenever $y \in D_r B(0,1)$ and $|z| < r^{-1}$.
Hence it follows from the dilation $z \mapsto rz$ that
\begin{align*}
\mathcal A \chi_{D_r N(0,1)} (y)  &= \int_{|z| <1 } \chi_{D_r N(0,1)} (y - h(z)) d\mu(z) \\
& = r^2 \int_{|z| < r^{-1}} \chi_{D_r N(0,1)} ( y - h(r z)) d\mu(z)\\
&\gtrsim r^2 \chi_{D_r B(0,1)}(y),
\end{align*}
which implies $\| \mathcal A \chi_{D_r N(0,1)} \|_{L^{q,\infty}} \gtrsim r^{2+ d(d+1)/q}$.
Since $|D_r N(0,1)|^{1/p} \sim r^{d(d+1)/p}$, we obtain $r^{2+ d(d+1)/q} \lesssim r^{d(d+1)/p}$ from the restricted weak type $(p,q)$ for $\mathcal A$.
Since $0< r <1 $ we get $1 + d(d+1)/(2q) \ge d(d+1)/(2p)$.

Now for $0 < \varepsilon <1$ we consider $B(0,\varepsilon)$ and $N(0,\varepsilon)$ so that $ \| \mathcal A \chi_{N(0,\varepsilon)} \|_{L^{q,\infty}} \gtrsim |B(0,\varepsilon)|^{1/q} = \varepsilon^{2d/q}$.
Also we have $|N(0,\varepsilon)| \sim \varepsilon^{2(d-1)}$.
By the restricted weak type $(p,q)$ for $\mathcal A$, we get $\varepsilon^{2d/q } \lesssim \varepsilon^{2(d-1)/p}$.
Hence, $d/q \ge (d-1)/p$, and duality gives $1+ (d-1)/q \ge d/p$.

Finally, the condition $q \ge p$ follows by the fact that $\mathcal A$ is translation invariant.
In fact, if any nonzero linear operator which is translation invariant is bounded from $L^p(\mathbb R^d)$ to $L^q(\mathbb R^d)$, then $q \ge p$ is necessary. (See Section 2.5.3 in \cite{Gr}.)

As a result, we can see that $\mathcal A$ is of
restricted weak type $(p,q)$ only if $(p,q) \in \mathcal R$.

{\sl Lorentz space estimates.}
Now we show that if \eqref{main_inq:mono} holds, then $u \le q_d$, $ p_d \le v$, and $u \le v$, where $(p_d,q_d) = (\frac{d+1}2, \frac{d(d+1)}{2(d-1)})$.

$\bullet$ $u \le v$ :
For a positive integer $M$, we choose $0 < \varepsilon <1$ and $x_j$ for $j=1,\dots,M$ such that $B (x_j,\varepsilon^j)$ are pairwise disjoint and $N (x_j,\varepsilon^j)$ are also
pairwise disjoint.

Let $f(y) = \sum_{j=1}^M \varepsilon^{-2(d-1)j/p_d} \chi_{N (x_j,\varepsilon^j)}(y)$. Then one can see that
\[
\|f\|_{L^{p_d,u}} \sim \left( \sum_{j=1}^M \varepsilon^{-\frac{2(d-1)u}{p_d}j} |N (x_j,\varepsilon^j)|^{\frac u{p_d}} \right)^{\frac1u} \sim M^{\frac 1 u}.
\]
Since $\mathcal A \chi_{N (x_j,\varepsilon^j)} (y) \gtrsim \chi_{B (x_j,\varepsilon^j)}(y)$ and $B(x_j,\varepsilon^j)$ are pairwise disjoint, we obtain that
\begin{align*}
\| \mathcal A f \|_{L^{q_d,v}}
& \gtrsim \Big\| \sum_{j=1}^M \varepsilon^{-\frac {2(d-1)j}{p_d}}  \chi_{B(x_j,\varepsilon^j)} \Big\|_{L^{q_d,v}} \sim \left( \sum_{j=1}^M \varepsilon^{-\frac {2 v(d-1)} {p_d}   j} |B(x_j,\varepsilon^j)|^{\frac v {q_d}}    \right)^{\frac 1v}  \\
& \sim \left( \sum_{j=1}^M \varepsilon^{-2v(\frac {d-1} {p_d} - \frac{d }{q_d}) j}    \right)^{\frac 1v} = M^{\frac 1 v}.
\end{align*}
For the last equality we use the fact that $(p_d,q_d)$ satisfies $d/q_d = (d-1)/p_d$. Therefore \eqref{main_inq:mono} gives
$M^{1/v} \lesssim M^{1/u}$ for any positive integer $M \ge 2$. Hence we obtain $u \le v$.

$\bullet$ $u \le q_d$ : Let $N_{\varepsilon,r}(x) = D_r N(x,\varepsilon)$ and $B_{\varepsilon,r}(x) = D_r B(x,\varepsilon)$. We begin with observing $\mathcal A \chi_{N_{\varepsilon,r}(x)}(y) \ge r^2 \chi_{B_{\varepsilon,r}(x)}(y)$ for some $x$ and $0<\varepsilon,r < 1$. Since $h(rz) = D_r h(z)$, we see that
\begin{align*}
\mathcal A \chi_{N_{\varepsilon,r}(x)}(y) & = \int_{|z| < 1} \chi_{N_{\varepsilon,r}(x)} (y -h(z) ) d\mu(z) = r^2 \int_{|z| < r^{-1}} \chi_{N_{\varepsilon,r}(x)} ( y - h(rz)) d\mu (z) \\
& = r^2 \int_{|z| < r^{-1}} \chi_{N (x,\varepsilon)} (D_r^{-1} y - h(z) ) d\mu(z) \gtrsim r^2 \chi_{B_{\varepsilon, r}(x)}(y).
\end{align*}

Let us set $\varepsilon_j = 2^{-(M +j)p_d}$, $r_j = 2^{-j}$, and choose $x_j$ so that $B_{\varepsilon_j, r_j}(x_j)$ are pairwise disjoint and also $N_{\varepsilon_j, r_j}(x_j)$ are pairwise disjoint.
(This choice of $\varepsilon_j$ and $r_j$ is borrowed from Section 3 in \cite{S1}.)
If we set $f = \sum_{j=1}^M 2^{2j} \chi_{N_{\varepsilon_j,r_j}}$, it follows that $\|f \|_{L^{p_d,u}} \sim M^{1/u} 2^{-2M(d-1)}$. Also, we have that
\[
\| \mathcal A f\|_{L^{q_d,v}} \gtrsim \| \sum_{j=1}^M 2^{2j} 2^{-2j} \chi_{B_{\varepsilon_j,r_j}(x_j)} \|_{L^{q_d,v}}
= |\bigcup_{j=1}^M B_{\varepsilon_j,r_j(x_j)}|^{\frac 1 {q_d}} =M^{\frac 1 {q_d}} 2^{-\frac{2dMp_d}{q_d}}.
\]
Thus \eqref{main_inq:mono} implies that $ M^{1/q_d} 2^{-2dMp_d/q_d} \lesssim M^{1/u} 2^{-2M(d-1)}$. Since $2^{2 p_d M(d/q_d - (d-1)/p_d)} = 1$, we get $u \le q_d$ whenever $M >1$. Note that if $M=1$, one can obtain $1 + d(d+1)/(2q) \ge d(d+1)/(2p)$.

$\bullet$ $p_d \le v$ : In this case, we make use of $\mathcal A^\ast \chi_{B_{\varepsilon,r}} \gtrsim \varepsilon^2 r^2$ on $N_{\varepsilon,r}$, which can be obtained by the same calculation as above.
For a positive integer $M >1 $ and $-M \le j \le -1$, we set $\varepsilon_j = 2^{-(j+M)q'_d}$, $r_j = 2^{j q'_d/q_d}$, and choose $x_j$ so that $B_{\varepsilon_j, r_j}(x_j)$ are pairwise disjoint and $N_{\varepsilon_j, r_j}(x_j)$ are also pairwise disjoint.

Since $2^{2 j } |B_{\varepsilon_j,r_j}(x_j)|^{1/q'_d} = 2^{2 j } 2^{-2d(j+M) + d(d+1)j /q_d} = 2^{-2dM }$ for $q_d  = \frac {d(d+1)}{2(d-1)} $. If we set $f =
\sum_{j=-1}^{-M} 2^{2j } \chi_{E_j}$, it follows that $\|f \|_{L^{q'_d,v'}} \sim M^{1/v'} 2^{-2dM}$.

Also, we have that
\begin{align*}
\| \mathcal A^\ast f \|_{L^{p'_d,u'}} & \gtrsim  \| \sum_{j=-1}^{-M} 2^{2j} 2^{-2(j+M)q'_d + 2jq'_d/q_d}  \chi_{N_{\varepsilon_j,r_j}(x_j)} \|_{L^{p'_d,u'}} \\
& = 2^{-2Mq'_d} |\bigcup_{j=-1}^{-M} N_{\varepsilon_j,r_j(x_j)}|^{\frac 1 {p'_d}} = M^{\frac 1 {p'_d}} 2^{-2Mq'_d -\frac{2M(d-1)q'_d }{p'_d}}.
\end{align*}
If \eqref{main_inq:mono} holds, then we get $
M^{\frac 1 {p'_d}} 2^{-2Mq'_d -\frac{2M(d-1)q'_d }{p'_d}} \lesssim M^{1/v'} 2^{-2dM}$. Since $(p_d,q_d) = (\frac{d+1}2, \frac{d(d+1)}{2(d-1)})$ satisfies $\frac d{q_d} = \frac{d-1}{p_d}$ or $1 + \frac{d-1}{p'_d} = \frac d {q'_d}$, it follows that $v' \le p'_d$ for any positive integer $M>1$.
\end{section}

%
\bibliographystyle{plain}


\end{document}